\DeclareFontFamily{U}{tipa}{}
\DeclareFontShape{U}{tipa}{m}{n}{<->tipa10}{}
\newcommand{\arc@char}{{\usefont{U}{tipa}{m}{n}\symbol{62}}}%
\newcommand{\arc}[1]{\mathpalette\arc@arc{#1}}
\newcommand{\arc@arc}[2]{%
  \sbox0{$\m@th#1#2$}%
  \vbox{
    \hbox{\resizebox{\wd0}{\height}{\arc@char}}
    \nointerlineskip
    \box0
  }%
}
\newcommand{\nord}{\mbox{\scriptsize ${\circ\atop\circ}$}}
\theoremstyle{definition}
\newtheorem{theorem}{Theorem}[section]
\newtheorem{thm}[theorem]{Theorem}
\newtheorem{prop}[theorem]{Proposition}
\newtheorem{defn}[theorem]{Definition}
\newtheorem{lemma}[theorem]{Lemma}
\newtheorem{cor}[theorem]{Corollary}
\newtheorem{prop-def}{Proposition-Definition}[section]
\newtheorem{rema}[theorem]{Remark}
\newtheorem{exam}[theorem]{Example}
\newtheorem{nota}[theorem]{Notation}
\newcommand{\R}{{\mathbb R}}
\newcommand{\N}{{\mathbb N}}
\newcommand{\C}{{\mathbb C}}
\newcommand{\Z}{{\mathbb Z}}
\newcommand{\one}{\mathbf{1}}
\renewcommand{\d}{\mathbf{d}}
\newcommand{\wt}{\mbox{\rm wt}\ }
\newcommand{\Hol}{\text{Hol}}
\newcommand{\state}[1]{{\left| #1\right\rangle}}
\begin{document}

\setlength{\oddsidemargin}{0cm} \setlength{\evensidemargin}{0cm}
\baselineskip=18pt

\title[MOSVA and modules over 2d orientable space forms]{Meromorphic open-string vertex algebras and modules over two-dimensional orientable space forms}
\author{Fei Qi}

\maketitle

\begin{abstract}
    We study the meromorphic open-string vertex algebras and their modules over the two-dimensional Riemannian manifolds that are complete, connected, orientable, and of constant sectional curvature $K\neq 0$. Using the parallel tensors, we explicitly determine a basis for the meromorphic open-string vertex algebra, its module generated by an eigenfunction for the Laplace-Beltrami operator, and its irreducible quotient. We also study the modules generated by the lowest weight subspace satisfying a geometrically interesting condition. It is showed that every irreducible module of this type is generated by some (local) eigenfunction on the manifold. A classification is given for modules of this type admitting a composition series of finite length. In particular and remarkably, if every composition factor is generated by eigenfunctions of special eigenvalue $p(p-1)K$ for some $p\in \Z_+$, then the module is completely reducible. 
\end{abstract}

\section{Introduction}

Vertex algebras are algebraic structures formed by vertex operators satisfying commutativity and associativity. In mathematics, they arose naturally in the study of representations of infinite-dimensional Lie algebras and the Monster group (see  \cite{B} and \cite{FLM}). In physics, they are used in the study of two-dimensional conformal field theory (see \cite{BPZ} and \cite{MS}). The commutativity and associativity allow us to view vertex algebras as analogs to the commutative associative algebras. 


Meromorphic open-string vertex algebras (MOSVAs hereafter) are algebraic structures formed by meromorphic vertex operators satisfying only associativity without commutativity. They were introduced by Huang in 2012 (see \cite{H-MOSVA}), as special cases of the open-string vertex algebras introduced by Huang and Kong in 2003 (see \cite{HK-OSVA}) where all correlation functions are rational functions. Similar to the case of vertex algebras, MOSVAs can be viewed as analogues of associative algebras that are not necessarily commutative. 

To exhibit a natural and nontrivial example, Huang introduced the MOSVA associated with a Riemannian manifold in 2012 (see \cite{H-MOSVA-Riemann}), using the parallel sections of the tensor algebra of the affinized tangent bundle. Given a complex-valued smooth function $f$ on an open subset $U$ of the manifold, Huang also constructed the module generated by $f$, and proved that association of $U$ with the sum of modules generated by all smooth functions over $U$ gives a presheaf of modules for the MOSVA. 

Of particular interest are the modules generated by eigenfunctions of the Laplace-Beltrami operator. As such functions can be understood as quantum states in quantum mechanics, the modules they generate can be understood as the string-theoretic excitement to the quantum states. It is Huang's idea that the modules for the MOSVA generated by the eigenfunctions and the yet-to-be-defined intertwining operators among these modules may lead to a mathematical construction of the quantum two-dimensional nonlinear $\sigma$-model. 

This paper studies the example of MOSVAs and modules over two-dimensional orientable space forms, i.e., two-dimensional Riemannian manifolds that are complete, connected, orientable, and with constant curvature $K\neq 0$. The organization and results are as follows,

In Section 2, we briefly review the previously known results in [H1] and [H2]. The axioms
for the MOSVA and modules are slightly modified using the results in [Q1] to make it easier to
verify.

In Section 3, we determine a basis for the MOSVA. We study the holonomy groups of all the bundles involved in Huang's construction. Then we write down the parallel sections, use them to determine a basis explicitly for the MOSVA, and compute its graded dimension. It turns out that the graded dimension is related to the hypergeometric function $_2F_1$. 

In Section 4, we focus on the modules generated by the eigenfunctions for the Laplace-Beltrami operator. We first prove a lemma on higher-order covariant derivatives, which is then used to show that the covariant derivative of an eigenfunction along every parallel tensor is a scalar multiple of the eigenfunction. The scalar turns out to be a polynomial function in terms of the eigenvalue with roots sitting in $\{p(p-1)K: p\in \Z_+\}$. Using these results, we determine a basis explicitly for every module generated by an eigenfunction. We also show that two such modules are isomorphic if and only if they are generated by eigenfunctions with the same eigenvalues over space forms of the same sectional curvature.

In Section 5, we study the irreducible modules generated by the eigenfunctions, which are quotients of the modules constructed in Section 4 by their unique maximal submodules. To identify these quotients, we prove a formula on the lowest weight projection of $Y(v,x)w$ for homogeneous $v\in V$ and $w\in W$, which plays a crucial role in the subsequent discussions. Then we take two quotients consecutively to the module identified in Section 4. The resulted quotient is irreducible if the eigenfunction is of generic eigenvalues $\lambda \notin \{p(p-1)K: p\in \Z_+\}$. On the other hand, if we start with an eigenfunctions of special eigenvalues $p(p-1)K$ for some $p\in \Z_+$, an additional quotient is needed to obtain an irreducible module, which is remarkably different to those with generic eigenvalues. 

In Section 6, we study the lowest weight modules in general, which are modules generated by its lowest weight subspace. We show that there exists a bijective correspondence between irreducible lowest weight modules for the MOSVA and irreducible modules for the algebra of parallel tensors. We then focus on lowest weight modules whose lowest weight subspace, as a module for the algebra of parallel tensors, satisfies a geometrically interesting condition, called the covariant derivative condition. It turns out that if such a module is irreducible, then it is isomorphic to one identified in Section 5. In other words, every irreducible module of this type is generated by some (local) eigenfunction on the manifold. We also give a classification of modules of this type admitting a composition series of finite length. Remarkably, if every composition factor is generated by eigenfunctions with special eigenvalues $p(p-1)K$ for some $p\in \Z_+$, then the module is completely reducible.

\textbf{Acknowledgements.} The author would like to thank Yi-Zhi Huang for his long-term support and patient guidance. The author would also like to thank Robert Bryant for his guidance on holonomy groups over the tensor powers of the tangent bundle and his correction on the statement of Lemma \ref{HolTensor}. The author would also like to thank Igor Frenkel, Nicholas Lai, Eric Schippers, and Nolan Wallach for helpful discussion. 

\section{Previously known results}

In this section, we recall some background knowledge in \cite{H-MOSVA}, \cite{H-MOSVA-Riemann} and \cite{Q-Mod}. 

\subsection{Axioms of the meromorphic open-string vertex algebra and its left module}

\begin{defn}\label{DefMOSVA}
{\rm A {\it meromorphic open-string vertex algebra} (hereafter MOSVA) is a $\Z$-graded vector space 
$V=\coprod_{n\in\Z} V_{(n)}$ (graded by {\it weights}) equipped with a {\it vertex operator map}
\begin{eqnarray*}
   Y_V:  V\otimes V &\to & V[[x,x^{-1}]]\\
	u\otimes v &\mapsto& Y_V(u,x)v,
  \end{eqnarray*}
and a {\it vacuum} $\one\in V$, satisfying the following axioms:
\begin{enumerate}
\item Axioms for the grading:
\begin{enumerate}
\item {\it Lower bound condition}: When $n$ is sufficiently negative,
$V_{(n)}=0$.
\item {\it $\d$-commutator formula}: Let $\d_{V}: V\to V$
be defined by $\d_{V}v=nv$ for $v\in V_{(n)}$. Then for every $v\in V$
$$[\d_{V}, Y_{V}(v, x)]=x\frac{d}{dx}Y_{V}(v, x)+Y_{V}(\d_{V}v, x).$$
\end{enumerate}

\item Axioms for the vacuum: 
\begin{enumerate}
\item {\it Identity property}: Let $1_{V}$ be the identity operator on $V$. Then
$Y_{V}(\mathbf{1}, x)=1_{V}$. 
\item {\it Creation property}: For $u\in V$, $Y_{V}(u, x)\mathbf{1}\in V[[x]]$ and 
$\lim_{x\to 0}Y_{V}(u, x)\mathbf{1}=u$.
\end{enumerate}

\item {\it $D$-derivative property and $D$-commutator formula}:
Let $D_V: V\to V$ be the operator
given by
$$D_{V}v=\lim_{x\to 0}\frac{d}{dx}Y_{V}(v, x)\one$$
for $v\in V$. Then for $v\in V$,
$$\frac{d}{dx}Y_{V}(v, x)=Y_{V}(D_{V}v, x)=[D_{V}, Y_{V}(v, x)].$$

\item {\it Weak associativity with pole-order condition}: For every $u_1, v\in V$, there exists $p\in \mathbb{N}$ such that for every $u_2\in V$, 
$$(x_0+x_2)^p Y_V(u_1, x_0+x_2)Y_V(u_2, x_2)v = (x_0+x_2)^p Y_V(Y_V(u_1, x_0)u_2, x_2)v.$$

\end{enumerate}  }
\end{defn}

\begin{rema}
This definition is slightly more special than the definition given by Huang in \cite{H-MOSVA}, where $p$ is not necessarily independent of $u_2$. This independence is called pole-order condition and can be used to simplify the verification of axioms. Please see \cite{Q-Mod} for a detailed discussion. 
\end{rema}

\begin{defn}\label{DefMOSVA-L}
Let $V$ be a meromorphic open-string vertex algebra.
A \textit{left $V$-module} is a $\C$-graded vector space 
$W=\coprod_{m\in \C}W_{[m]}$ (graded by \textit{weights}), equipped with 
a \textit{vertex operator map}
\begin{eqnarray*}
Y_W^L: V\otimes W & \to & W[[x, x^{-1}]]\\
u\otimes w & \mapsto & Y_W^L(u, x)w,
\end{eqnarray*}
an operator $\d_{W}$ of weight $0$ and 
an operator $D_{W}$ of weight $1$, satisfying the 
following axioms:
\begin{enumerate}

\item Axioms for the grading: 
\begin{enumerate}
\item \textit{Lower bound condition}:  When $\text{Re}{(m)}$ is sufficiently negative,
$W_{[m]}=0$. 
\item  \textit{$\mathbf{d}$-grading condition}: for every $w\in W_{[m]}$, $\d_W w = m w$.
\item  \textit{$\mathbf{d}$-commutator formula}: For $u\in V$, 
$$[\mathbf{d}_{W}, Y_W^L(u,x)]= Y_W^L(\mathbf{d}_{V}u,x)+x\frac{d}{dx}Y_W^L(u,x).$$
\end{enumerate}

\item The \textit{identity property}:
$Y_W^L(\one,x)=1_{W}$.

\item The \textit{$D$-derivative property} and the  \textit{$D$-commutator formula}: 
For $u\in V$,
\begin{eqnarray*}
\frac{d}{dx}Y_W^L(u, x)
&=&Y_W^L(D_{V}u, x) \\
&=&[D_{W}, Y_W^L(u, x)].
\end{eqnarray*}

\item {\it Weak associativity with pole-order condition}: For every $v_1\in V, w\in W$, there exists $p\in \mathbb{N}$ such that for every $v_2\in V$, 
$$(x_0+x_2)^p Y_W^L(v_1, x_0+x_2)Y_W^L(v_2, x_2)w = (x_0+x_2)^p Y_W^L(Y_V(v_1, x_0)v_2, x_2)w. $$
\end{enumerate} 
\end{defn}


\subsection{Example: Noncommutative Heisenberg}

The first nontrivial example of MOSVA is constructed by Huang in \cite{H-MOSVA}. We should recall the construction here.

Let $\mathfrak{h}$ be a finite-dimensional Euclidean space over $\R$. We define a vector space  
$$\hat{\mathfrak{h}} = \mathfrak{h} \otimes_\R \C[t, t^{-1}] \oplus \C \mathbf{k},$$ 
which is the ambient vector space of the Heisenberg Lie algebra. Note that 
$$\hat{\mathfrak{h}} = \hat{\mathfrak{h}}_- \oplus \hat{\mathfrak{h}}_0 \oplus \hat{\mathfrak{h}}_+,$$
where 
\begin{align*}
    \hat{\mathfrak{h}}_+ &= \mathfrak{h}\otimes_\R t\C[t],\\
    \hat{\mathfrak{h}}_0 &= \mathfrak{h}\otimes_\R \C \oplus \C\mathbf{k},\\
    \hat{\mathfrak{h}}_+ &= \mathfrak{h}\otimes_\R t^{-1}\C[t^{-1}].
\end{align*}

Let $N(\hat{\mathfrak{h}})$ be the quotient of the tensor algebra $T(\hat{h})$ of $\hat{h}$ modulo the two-sided ideal $I$ generated by
\begin{eqnarray}
&(a\otimes t^{m})\otimes (b\otimes t^{n})
- (b\otimes t^{n})\otimes (a\otimes t^{m})
-m(a, b)\delta_{m+n, 0}\mathbf{k},& \nonumber\\
&(a\otimes t^{k})\otimes (b\otimes t^{0})
-(b\otimes t^{0})\otimes (a\otimes t^{k}),\nonumber&\\
&(a\otimes t^{k})\otimes \mathbf{k}-\mathbf{k}\otimes (a\otimes t^{k}),& \label{IdealT(h)}
\end{eqnarray}
for $a, b\in \mathfrak{h}$, $m\in \Z_{+}$, $n\in -\Z_{+}$, $k\in \Z\setminus\{0\}$. Note that in the quotient, there are no relations between $X \otimes t^m $ and $Y\otimes t^n$ for $m, n\in Z_+$, for $m, n\in \Z_-$, and for $m=n=0$. This is the main difference to the usual construction of Bosonic Fock space, where $$(a\otimes t^m)\otimes (b\otimes t^n)-(b\otimes t^n)\otimes (a\otimes t^m)$$ 
is also included in the generators of $I$, for each $a, b\in \mathfrak{h}, m, n\in \Z_{\pm}$ and $m=n=0$. Nevertheless, the PBW structure still holds for this quotient: $N(\hat{\mathfrak{h}}) \simeq T(\hat{\mathfrak{h}}_-)  \otimes T(\hat{\mathfrak{h}}_+) \otimes T(\mathfrak{h})\otimes T(\C \mathbf{k})$ as vector spaces (see \cite{H-MOSVA}, Proposition 3.1) 

Let $\C = \C \mathbf{1}$ be a one-dimensional vector space on which $\mathfrak{h}$ acts by 0. Define the action of $\mathbf{k}$ by 1 and $\hat{\mathfrak{h}}_+$ by 0. One can prove that  the induced module $N(\hat{\mathfrak{h}}) \otimes_{N(\hat{\mathfrak{h}}_+\oplus\hat{\mathfrak{h}}_0)} \C$ is isomorphic to $T(\hat{\mathfrak{h}}_-)$ as a vector space. We regard $T(\hat{\mathfrak{h}}_-)$ now as an $N(\hat{\mathfrak{h}})$-module and denote the action of $h\otimes t^j$ by $h(j)$. Then $T(\hat{\mathfrak{h}}_-)$ is spanned by $h_1(-m_1) \cdots h_k(-m_k)\mathbf{1}$ for $k \in \mathbb{N}, h_1, ..., h_k \in \mathfrak{h}, m_1, ..., m_k \in \Z_+$. 

Huang proved the following theorem in \cite{H-MOSVA}. 

\begin{thm}[\cite{H-MOSVA}, Theorem 5.1]\label{NoncomHeis} The left $N(\hat{\mathfrak{h}})$-module $T(\hat{\mathfrak{h}}_-)$ forms a grading-restricted MOSVA with the following vertex operator action: 
\begin{align*}
& Y(h_1(-m_1) \cdots h_k(-m_k)1, x) \\
& \qquad= \nord \frac 1 {(m_1-1)!} \frac {d^{m_1-1}}{dx^{m_1-1}}h_1(x) \cdots \frac 1 {(m_k-1)!} \frac {d^{m_k-1}}{dx^{m_k-1}}h_k(x) \nord,    
\end{align*}
where $h_i(x) = \sum_{n\in \Z} h_i(n)x^{-n-1}$. The normal ordering is defined as follows:
$$\nord h_1(m_1)\cdots h_k(m_k)\nord = h_{\sigma(1)}(m_{\sigma(1)}) \cdots h_{\sigma(k)}(m_{\sigma(k)}),$$
where $\sigma\in S_k$ is the unique permutation such that
\begin{eqnarray}\sigma(1) < \cdots < \sigma(\alpha), \sigma(\alpha+1)< \cdots < \sigma(\beta), \sigma(\beta)<\cdots < \sigma(k),\quad\, \nonumber\\
m_{\sigma(1)}, ...,  m_{\sigma(\alpha)}< 0, m_{\sigma(\alpha+1)}, ..., m_{\sigma(\beta)} >0, m_{\sigma(\beta+1)}, ..., m_{\sigma(k)}= 0. \label{Shuffle}
\end{eqnarray}
\end{thm}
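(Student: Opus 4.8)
The plan is to verify the axioms of Definition \ref{DefMOSVA} for the $\Z$-graded vector space $T(\hat{\mathfrak{h}}_-)$, with the grading given by weight (each $h(-m)$ raising weight by $m$), the vacuum $\one$, and the vertex operator map displayed in the statement. I would first record the commutation relations among the modes $h(n)$ implied by the relations \eqref{IdealT(h)}: the only nontrivial bracket is $[h(m), h'(n)] = m(h,h')\delta_{m+n,0}\mathbf{k}$ with $\mathbf{k}$ acting as $1$, while \emph{positive} modes commute with positive modes and \emph{negative} with negative only in the weak sense that no relation is imposed — so one must be careful that the normal-ordered products in the definition of $Y$ are genuinely well-defined as formal series with coefficients in $\End T(\hat{\mathfrak{h}}_-)$. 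This amounts to checking that, applied to any fixed vector, only finitely many terms contribute to each power of $x$, which follows because positive modes eventually annihilate any vector in $T(\hat{\mathfrak{h}}_-)$.

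Next I would dispatch the easy axioms. The lower bound condition is immediate since $T(\hat{\mathfrak{h}}_-)$ is $\N$-graded. For the $\d$-commutator formula, one checks it on a single generating field $h(x)$ — where it reads $[\d, h(x)] = x\frac{d}{dx}h(x) + h(x)$, equivalently $[\d, h(n)] = -n\, h(n)$ — and then extends to normal-ordered products by the Leibniz-type behavior of the commutator with $\d$, using that $\d$ is a derivation-like operator compatible with the grading. The identity property $Y(\one,x) = 1_W$ is clear from the empty normal-ordered product. The creation property follows because $h(x)\one = \sum_{n<0} h(n)\one\, x^{-n-1} \in T(\hat{\mathfrak{h}}_-)[[x]]$ with constant term $h(-1)\one$, and more generally $\frac{1}{(m-1)!}\frac{d^{m-1}}{dx^{m-1}}h(x)\one$ has no negative powers of $x$ and constant term $h(-m)\one$; the normal-ordered product of creation-type fields applied to $\one$ then has the stated limit. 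For the $D$-operator, define $D = D_W$ by $Dv = \lim_{x\to 0}\frac{d}{dx}Y(v,x)\one$; one computes $D h(-m)\one = m\, h(-m-1)\one$, checks $[D, h(n)] = -n\, h(n-1)$ on modes, and then verifies $\frac{d}{dx}Y(v,x) = Y(Dv,x) = [D, Y(v,x)]$ first for $v = h(-1)\one$ (where it is the identity $\frac{d}{dx}h(x) = [D,h(x)]$) and then propagates through normal-ordered products by induction on weight, again using the derivation property of $[D,-]$.

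The substantive step is weak associativity with the pole-order condition: given $u_1, v \in T(\hat{\mathfrak{h}}_-)$, I must produce $p \in \N$, depending only on $u_1$ and $v$, such that
$$(x_0+x_2)^p\, Y(u_1, x_0+x_2)Y(u_2, x_2)v = (x_0+x_2)^p\, Y(Y(u_1,x_0)u_2, x_2)v$$
for all $u_2$. I expect this to be the main obstacle, and I would approach it by reduction to generating fields: first establish the identity when $u_1 = h(-1)\one$, where $Y(u_1, x) = h(x)$ and the required $p$ is governed by the order of the pole in the two-point function $\langle h(x_1) h'(x_2)\rangle \sim (x_1 - x_2)^{-2}$ — so $p = 2$ suffices with $v$ fixed but one needs to bound contributions from contractions of $h$ with the finitely many negative modes building $v$, which is where the dependence on $v$ (not on $u_2$) enters. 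Then one bootstraps to arbitrary $u_1$: writing $u_1 = h_1(-m_1)\cdots h_k(-m_k)\one$ and using the already-established $D$-derivative property (to handle the $\frac{d^{m_i-1}}{dx^{m_i-1}}$) together with an iterated application of the single-field case and the associativity of normal ordering, one assembles the general statement, with $p$ accumulating additively over the $k$ factors and the pole orders they generate against $v$. The technical heart is the combinatorial bookkeeping showing that the normal-ordering prescription is exactly compatible with the rearrangement of modes forced by moving $Y(u_1, x_0+x_2)$ past $Y(u_2,x_2)$ and re-expanding, i.e. that the "missing" commutativity relations among like-signed modes never obstruct associativity — this is precisely the phenomenon that makes MOSVAs (as opposed to vertex algebras) the right framework, and it is the point I would expect to require the most care. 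Finally, the grading-restrictedness (finite-dimensionality of each $T(\hat{\mathfrak{h}}_-)_{(n)}$) follows from the PBW-type decomposition $N(\hat{\mathfrak{h}}) \simeq T(\hat{\mathfrak{h}}_-) \otimes T(\hat{\mathfrak{h}}_+) \otimes T(\mathfrak{h}) \otimes T(\C\mathbf{k})$ recalled above, since the number of ways to write $n$ as an ordered sum of positive integers weighted by $\dim\mathfrak{h}$ choices each is finite.
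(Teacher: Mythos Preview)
The paper does not actually prove this theorem; it is quoted from \cite{H-MOSVA} without argument. The closest thing to a proof in the present paper is the sketch given for the analogous statement about $V_U(l,\one)$ in Section~\ref{3-5}, which explicitly invokes the computational machinery of \cite{H-MOSVA}: a Wick-type expansion (Corollary~4.9 there) expressing $Y(u_1,x_1)Y(u_2,x_2)$ as a sum over contractions, and a parallel expansion (Formula~(5.29) there) for $Y(Y(u_1,x_0)u_2,x_2)$. Weak associativity with the pole-order condition is then read off by comparing these two closed-form expressions term by term and bounding the total order of the $(x_1-x_2)$-poles, which depends only on the modes in $u_1$ and in $v$. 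No induction on the length of $u_1$ is used.

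Your approach is genuinely different, and the weak-associativity step has a gap. The reduction ``first do $u_1 = h(-1)\one$, then bootstrap to general $u_1$ via iterated single-field associativity and the associativity of normal ordering'' is the standard VOA playbook, but it leans on locality: the usual recursion $Y(a(-n)b,x) = \Res_{x_1}\bigl[(x_1-x)^{-n}Y(a,x_1)Y(b,x) - (-x+x_1)^{-n}Y(b,x)Y(a,x_1)\bigr]$ requires commutativity of $Y(a,\cdot)$ with $Y(b,\cdot)$, which is precisely what fails here. In the MOSVA setting $Y(a_1(-m_1)\cdots a_k(-m_k)\one,x)$ is \emph{defined} as a combinatorial normal-ordered product, and there is no a priori mechanism by which weak associativity for a single $h(x)$ propagates to such products --- indeed, normal ordering is not associative even in the commutative theory, so the phrase ``associativity of normal ordering'' does not name a usable tool. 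What Huang's proof actually does is bypass the induction entirely: both sides of the weak-associativity identity are expanded directly via the noncommutative Wick theorem, and one checks that the two expansions agree after clearing the explicitly visible poles. If you want to carry out your own proof, that explicit contraction calculus is the missing ingredient; the inductive shortcut does not survive the loss of commutativity.
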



\subsection{MOSVA of a Riemannian manifold} In \cite{H-MOSVA-Riemann}, Huang further constructed a MOSVA for any Riemannian manifold $M$, using the parallel sections of a certain bundle. We recall the construction here. 

Let $M$ be a Riemannian manifold. Let $p\in M$. We consider the affinization of tangent bundle 
$$\widehat{TM} = TM \otimes_\R (M \times \C[t, t^{-1}]) \oplus (M\times \C \mathbf{k}),$$
where $M\times \C[t,t^{-1}]$ and $M\times \C\mathbf{k}$ are trivial bundles over $M$. The fiber of this bundle at $p$ is nothing but 
$$\widehat{T_pM} = T_pM \otimes_\R \C[t, t^{-1}]\oplus \C \mathbf{k}.$$
Analogous to the constructions in the previous subsection, we have 
\begin{align*}
    \widehat{TM} & = \widehat{TM}_+ \oplus \widehat{TM}_0 \oplus \widehat{TM}_-,
\end{align*}
with 
\begin{align*}
    \widehat{TM}_\pm &= TM\otimes_\R (M \times t^{\pm 1}\C[t^{\pm 1}]), \\
    \widehat{TM}_0 &= TM \otimes_\R (M \times \C t^0) \oplus TM \otimes_\R (M \times \C\mathbf{k}).
\end{align*}

Now we look into the tensor algebra bundle $T(\widehat{TM})$. We similarly construct a bundle $N(\widehat{TM})$, whose fiber at each point is obtained by taking the quotient of $T(T_pM)$ versus the two-sided ideal generated by elements in (\ref{IdealT(h)}), for $a, b \in T_pM$. Likewise, $N(\widehat{TM})$ is isomorphic to $T(\widehat{TM}_-) \otimes  T(\widehat{TM}_+)\otimes T(\widehat{TM})\otimes T(\C\mathbf{k})$. 

We now consider the space $\Pi(T(\widehat{TM}_-))$ consisting of parallel sections of the tensor algebra bundle $T(\widehat{TM}_-)$ of $\widehat{TM}_-$. It is well known that $\Pi(T(\widehat{TM}_-))$, as a vector space, is isomorphic to subspace of fixed points $T(\widehat{T_pM}_-)^{\Hol(T(\widehat{TM}_-)}$, where Hol means the holonomy group of the bundle. Recall that Theorem \ref{NoncomHeis} endows the space $T(\widehat{T_pM}_-)$ with a MOSVA structure. In \cite{H-MOSVA-Riemann}, Huang proved that the elements of the holonomy group could be realized as automorphisms of the MOSVA, and on the fixed-point subspace of automorphism there is a MOSVA structure. As a consequence: 

\begin{thm}\label{MOSVA-Riemann}
On the space $\Pi(T(\widehat{TM}_-))$ of parallel sections there is a MOSVA structure, which is a subalgebra of the MOSVA of $T(\widehat{T_pM}_-)$ in Theorem \ref{NoncomHeis}. 
\end{thm}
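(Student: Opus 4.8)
The plan is to exploit the well-known identification $\Pi(T(\widehat{TM}_-)) \cong T(\widehat{T_pM}_-)^{\mathrm{Hol}(T(\widehat{TM}_-))}$ and reduce the statement to two facts: that holonomy acts by MOSVA automorphisms on $T(\widehat{T_pM}_-)$, and that the fixed-point subspace of a group of MOSVA automorphisms is a MOSVA subalgebra. First I would fix a base point $p\in M$ and recall that parallel transport along loops based at $p$ induces an action of $\mathrm{Hol}(TM)$ (and hence of the induced holonomy group on each tensor power bundle and on the affinization) on the fiber $T_pM$, preserving the Euclidean inner product since the Levi-Civita connection is metric. Because each $g\in\mathrm{Hol}(TM)\subseteq O(T_pM)$ is an orthogonal transformation, it extends canonically to $\widehat{T_pM}$ by acting as $g\otimes\mathrm{id}$ on $T_pM\otimes_\R\C[t,t^{-1}]$ and trivially on $\C\mathbf{k}$; this extension preserves the defining relations in (\ref{IdealT(h)}) because those relations only involve the inner product $(a,b)$ and the grading in $t$, both of which $g$ preserves. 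Hence $g$ descends to an algebra automorphism of $N(\widehat{T_pM})$ and restricts to a linear automorphism of $T(\widehat{T_pM}_-)$.

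Next I would check that this linear automorphism $g$ is in fact an automorphism of the MOSVA structure from Theorem \ref{NoncomHeis}: it preserves the grading (it commutes with $\d$ since it preserves each weight space, being built from weight-homogeneous operators $h(-m)$), it fixes the vacuum $\mathbf{1}$, it intertwines $D$, and most importantly $g\,Y(u,x)v = Y(gu,x)gv$. The last identity follows because the vertex operator in Theorem \ref{NoncomHeis} is built entirely out of the generating fields $h(x)$ via normal-ordered products and derivatives, and $g$ acts on the modes by $g\,h(n)\,g^{-1} = (gh)(n)$ — this is immediate from the $O(T_pM)$-equivariance of the Heisenberg-type commutation relations. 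Thus every $g\in\mathrm{Hol}$ is a MOSVA automorphism, so the full holonomy group acts by MOSVA automorphisms.

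Then I would verify the general principle that the fixed-point set $V^G$ of a group $G$ of automorphisms of a MOSVA $V$ is again a MOSVA. Here the grading axioms and lower-bound condition are inherited from $V$ after checking $V^G = \coprod_n (V^G)_{(n)}$, which holds because $G$ preserves the grading; $\mathbf{1}\in V^G$; $D_V$ preserves $V^G$ since $D_V g = g D_V$; the vertex operator map restricts to $V^G\otimes V^G\to V^G[[x,x^{-1}]]$ because $g\,Y(u,x)v = Y(gu,x)gv = Y(u,x)v$ for $u,v\in V^G$; and weak associativity with the pole-order condition is inherited verbatim since it is an identity among vectors of $V$ that all happen to lie in $V^G$. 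Applying this with $V = T(\widehat{T_pM}_-)$ and $G = \mathrm{Hol}(T(\widehat{TM}_-))$, and transporting the structure along the isomorphism $\Pi(T(\widehat{TM}_-))\cong V^G$, yields the theorem, with the subalgebra claim being automatic.

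The main obstacle I anticipate is not any single step but the bookkeeping around the affinization and the noncommutative quotient $N(\widehat{T_pM})$: one must be careful that the holonomy action really does extend coherently from $TM$ to all the bundles $\widehat{TM}_-$, $T(\widehat{TM}_-)$, $N(\widehat{TM})$, and that the identification of parallel sections with holonomy-invariant fiber vectors is compatible with the vertex operator map fiberwise. The potential subtlety — flagged in the acknowledgements regarding the holonomy group over tensor powers — is that one should work with the holonomy group of the relevant tensor-power bundle rather than naively with $\mathrm{Hol}(TM)$ acting diagonally; since the induced representation on tensor powers may have a larger stabilizer, the correct statement is phrased in terms of $\mathrm{Hol}(T(\widehat{TM}_-))$, and I would make sure the invariance computation uses exactly that group. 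Everything else is routine verification of the axioms in Definition \ref{DefMOSVA}.
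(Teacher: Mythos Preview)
Your proposal is correct and follows essentially the same strategy that the paper describes (citing \cite{H-MOSVA-Riemann}): identify $\Pi(T(\widehat{TM}_-))$ with the holonomy-fixed subspace of the fiber, show that holonomy elements act as MOSVA automorphisms because they are orthogonal on $T_pM$ and hence preserve the relations (\ref{IdealT(h)}) and the mode structure of the vertex operators, and then observe that the fixed-point set of a group of MOSVA automorphisms inherits all the axioms of Definition~\ref{DefMOSVA}. Your attention to the distinction between $\mathrm{Hol}(TM)$ and $\mathrm{Hol}(T(\widehat{TM}_-))$ is apt and matches the care taken later in Section~3.
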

Moreover, by an analogous argument to the Segal-Sugawara construction, Huang also showed that the Laplace-Beltrami operator on the manifold $M$ is realized as a component of some vertex operator. 


\subsection{Modules generated by smooth functions}\label{module-def}

In \cite{H-MOSVA-Riemann}, Huang considered that the action of the parallel sections $\Pi(T(TM^\C))$ on the space of smooth functions via the covariant derivatives. More precisely, let $U$ be an open subset of $M$; let $f$ be a smooth function on $U$. A parallel section $X \in \Pi((TM^\C)^{\otimes k})$ acts on $f$ by 
$$\psi_U(X)f:=(\sqrt{-1})^k(\nabla^k f)(X).$$ 
Huang proved (Theorem 4.1, \cite{H-MOSVA-Riemann}) that the action respects the associative algebra structure defined by $\otimes$, namely, for $X, Y\in \Pi(T(TM)^\C)$,
$$\psi_U(X\otimes Y) = \psi_U(X)\psi_U(Y).$$
Thus the space of complex-valued smooth functions can be viewed as a module for the associative algebra $\Pi(T(TM)^\C)$. 

Let $p\in U$. Identify $\Pi(T(TM)^\C)$ with $T(T_pM^\C)^{\Hol(T(T_pM^\C))}$, which can be viewed as a subalgebra of $T(T_pM^\C)$. Thus the module $C^\infty(U)$ can be induced, defining  
$$C_p(U) = T(T_pM^\C)\otimes _{T(T_pM^{\C})^{\Hol(T(T_pM^{\C}))}}C^\infty(U).$$
By Theorem 6.5 of \cite{H-MOSVA}, on the vector space $$T(\widehat{T_pM}_-)\otimes C_p(U)$$
there is a natural module structure for the MOSVA $T(\widehat{T_pM}_-)$. In particular, it is a module for the MOSVA $T(\widehat{T_pM}_-)^{\Hol(\widehat{T_pM}_-)} = \Pi(T(\widehat{TM}_-))$. We can thus consider the $\Pi(T(\widehat{TM}_-))$-submodule generated by 
$$1\otimes (1\otimes f)$$
for some $f\in C^\infty(U)$. Since the Laplace-Beltrami operator is a component of some vertex operator, it would be natural to consider the submodule generated by its eigenfunction. As the Laplace-Beltrami operator plays the role of energy operator in quantum mechanics, the submodule can then be interpreted as string-theoretical excitement of the quantum states.


\section{Basis of the MOSVA and modules}

Let $M$ be a two-dimensional Riemannian manifold with constant sectional curvature $K$. For convenience, we assume $M$ is orientable, connected and complete. We will also focus on the case $K\neq 0$. In this section, we will first determine the parallel sections of the tensor algebra of the affinized tangent bundle. Using these parallel sections, we explicitly identify a basis for the MOSVA and modules generated by the eigenfunctions constructed in \cite{H-MOSVA-Riemann}. 


\subsection{Holonomy of the tensor powers of the complexified tangent bundle. }

Recall that the holonomy group of a bundle $E$ based at a point $p\in M$ is the subgroup generated by all the parallel translations along piecewise smooth contractible loops based on $p$. We will simply denote the holonomy group by $\Hol(E)$ since holonomy groups based at different points are isomorphic.   

\begin{lemma}
If $K\neq 0$, then the holonomy group $\Hol(TM)$ of the tangent bundle $TM$ is $SO(2, \R)$. 
\end{lemma}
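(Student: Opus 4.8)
The strategy is to use the Ambrose--Singer holonomy theorem together with the fact that $M$ is orientable and connected. Since $M$ is orientable and Riemannian of dimension $2$, the structure group of $TM$ reduces to $SO(2,\mathbb{R})$, so we immediately get the inclusion $\Hol(TM) \subseteq SO(2,\mathbb{R})$; it remains to prove the reverse inclusion, i.e. that the holonomy is not a proper subgroup. Since $SO(2,\mathbb{R})$ is connected and abelian, its only proper closed subgroups are finite cyclic groups and the trivial group, so the restricted holonomy group (which is connected) is either trivial or all of $SO(2,\mathbb{R})$. I would therefore first dispose of the case where the restricted holonomy is trivial: by the Ambrose--Singer theorem the Lie algebra of $\Hol^0(TM)$ is spanned by the image of the curvature operators $R(X,Y)$ transported around loops, and triviality of $\Hol^0(TM)$ forces $R \equiv 0$, contradicting $K \neq 0$. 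Hence $\Hol^0(TM) = SO(2,\mathbb{R})$, and since $\Hol^0(TM) \subseteq \Hol(TM) \subseteq SO(2,\mathbb{R})$ we conclude $\Hol(TM) = SO(2,\mathbb{R})$.

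Concretely, on a surface of constant curvature $K$ the Riemann curvature tensor in an orthonormal frame $\{e_1,e_2\}$ is $R(e_1,e_2)e_1 = -K e_2$ and $R(e_1,e_2)e_2 = K e_1$, so the endomorphism $R(e_1,e_2)$ is $K$ times the generator $J = \begin{pmatrix} 0 & -1 \\ 1 & 0 \end{pmatrix}$ of $\mathfrak{so}(2,\mathbb{R})$. Thus whenever $K \neq 0$ the holonomy Lie algebra already contains $\mathfrak{so}(2,\mathbb{R})$ at the generating point, giving $\mathfrak{hol}(TM) = \mathfrak{so}(2,\mathbb{R})$ directly, without needing to transport curvature operators around loops. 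This is the cleanest route: it avoids any discussion of whether $M$ is simply connected.

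I expect the only subtlety to be the logical bookkeeping between the restricted holonomy group $\Hol^0$ (the image of contractible loops, which is connected) and the full holonomy group $\Hol$; here the paper's own definition of $\Hol(E)$ uses \emph{contractible} loops, so $\Hol(TM) = \Hol^0(TM)$ by definition and there is nothing extra to check. The one genuinely geometric input is the constant-curvature formula for $R$, which is standard (it is equivalent to saying $M$ is a space form), and the one genuinely Lie-theoretic input is the classification of connected subgroups of the circle group. Neither step is hard; the main point is simply to assemble them correctly. An alternative, more hands-on argument would exhibit an explicit contractible loop (e.g. a small geodesic circle on the sphere or hyperbolic plane) whose parallel transport is rotation by the enclosed area times $K$, a nonzero angle, and then note that varying the radius continuously yields all rotations; but the Ambrose--Singer argument is shorter and I would present that.
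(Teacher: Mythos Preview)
Your proof is correct but takes a genuinely different route from the paper. The paper gives the hands-on argument you mention only as an alternative at the end: it parallel-transports around a geodesic triangle, uses the Gauss--Bonnet theorem to identify the resulting rotation angle as $2\pi - K\cdot\mathrm{Area}$, and then varies the triangle to obtain a whole interval of rotation angles, which generates $SO(2,\mathbb{R})$. Your main argument instead invokes Ambrose--Singer to read off the holonomy Lie algebra directly from the curvature endomorphism $R(e_1,e_2) = K\cdot J$; since $\mathfrak{so}(2)$ is one-dimensional and $K\neq 0$, this immediately gives $\mathfrak{hol} = \mathfrak{so}(2)$ and hence $\Hol^0 = SO(2,\mathbb{R})$. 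Your approach is shorter and cleanly separates the geometric input (the curvature formula) from the Lie-theoretic input, at the cost of quoting a nontrivial theorem; the paper's approach is more elementary and self-contained, relying only on the classical Gauss--Bonnet formula for geodesic triangles, which fits the explicit, computational style of the surrounding section. You also correctly observe that the paper's definition of holonomy uses contractible loops, so the distinction between $\Hol$ and $\Hol^0$ is moot here.
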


\begin{proof}
Since $M$ is orientable, we know that $\Hol(TM) \subset SO(2,\R)$ (see \cite{P}). Fix $p, q, r\in M$. Let $\gamma_1, \gamma_2, \gamma_3$ be geodesics connecting $pq, qr$ and $rp$. Let $\alpha_p, \alpha_q, \alpha_r$ be the angles of geodesic triangle $pqr$. Let $v\in T_pM$ be a unit vector. One sees easily that that composition of parallel transport along the concatenation of $\gamma_1, \gamma_2$ and $\gamma_3$ ends up with a unit vector $w\in T_pM$ that is obtained from rotating $v$ by the angle $3\pi - (\alpha_p+\alpha_q+\alpha_r)$, as shown in the following two figures.  

\includegraphics[scale = 0.55]{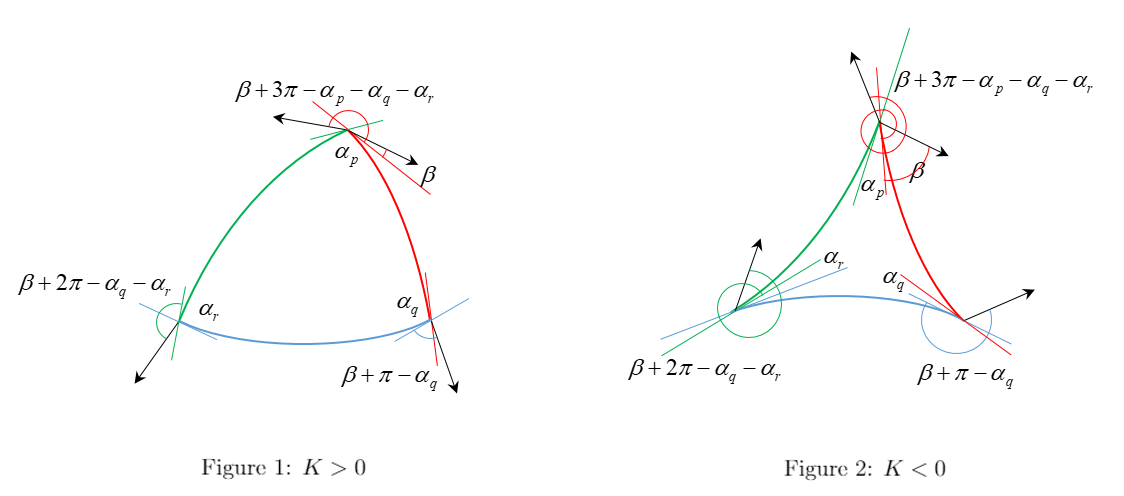}

By Gauss-Bonnet theorem (geodesic triangle version, see \cite{DoCarmo}), 
$$\alpha_p+\alpha_q+\alpha_r = \pi + K\cdot \text{Area}(pqr).$$ 
Now let $q, r$ vary near $p$, so that the area of the geodesic triangle varies continuously within some interval $[0, s]$. Then we see that rotations by angles between $2\pi - Ks$ and $2\pi$ are all included in the holonomy group. These rotations generate all $SO(2, \R)$. 
\end{proof}

We focus on the complexified tangent bundle $\C \otimes_\R TM $, where $\C$ is regarded as a trivial bundle over $M$. 
\begin{nota}
From now on, we shall denote the complexified tangent bundle $\C\otimes_\R TM$ by $E$. We will also omit the $\otimes_\R$ symbol when writing the smooth sections in $\Gamma(E)$. All $\otimes$ symbol will mean $\otimes_\C$ by default unless otherwise stated. 
\end{nota} 
The connection on $E$ is related to that on $TM$ by
$$\nabla (X+iY) = \nabla(X) + \sqrt{-1}\nabla(Y), X, Y \in \Gamma(TM).$$

\begin{lemma}
    $\Hol(E) = \Hol(TM) = SO(2, \R)$. 
\end{lemma}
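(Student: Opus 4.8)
The plan is to exploit the fact that holonomy is computed by parallel transport, and parallel transport on the complexified bundle $E = \C \otimes_\R TM$ is, by the very definition of the connection $\nabla(X + \sqrt{-1}Y) = \nabla X + \sqrt{-1}\nabla Y$, just the complexification of parallel transport on $TM$. So I would first recall that if $P_\gamma : T_pM \to T_pM$ denotes parallel transport along a piecewise-smooth contractible loop $\gamma$ based at $p$, then parallel transport along $\gamma$ in $E$ is $P_\gamma \otimes 1_\C : T_pM \otimes_\R \C \to T_pM \otimes_\R \C$, i.e. $P_\gamma$ acting $\C$-linearly on $E_p$. This is immediate from uniqueness of solutions to the parallel transport ODE: a section $s = X + \sqrt{-1}Y$ is parallel along $\gamma$ iff both $X$ and $Y$ are parallel along $\gamma$.

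Next I would assemble the identification of holonomy groups. The holonomy group $\Hol(E)$ is by definition the subgroup of $GL(E_p) = GL(T_pM \otimes_\R \C)$ generated by all such $P_\gamma \otimes 1_\C$. The map $A \mapsto A \otimes 1_\C$ is an injective group homomorphism from $GL(T_pM)$ into $GL(T_pM \otimes_\R \C)$ (injective because a real-linear map is determined by its complexification), and it carries the subgroup generated by the $P_\gamma$ — which is exactly $\Hol(TM)$ — isomorphically onto the subgroup generated by the $P_\gamma \otimes 1_\C$ — which is exactly $\Hol(E)$. Hence $\Hol(E) \cong \Hol(TM)$, and by the previous lemma $\Hol(TM) = SO(2,\R)$ when $K \neq 0$, giving $\Hol(E) = SO(2,\R)$. (One should note $SO(2,\R)$ here is identified via its image $SO(2,\R) \otimes 1_\C$ acting on $E_p$; concretely, after fixing an oriented orthonormal frame, both act by the same rotation matrices, now viewed with complex entries.)

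There is essentially no serious obstacle here; the statement is a formality once one observes that complexification commutes with solving the parallel-transport equation. The only point requiring a little care is bookkeeping about what ``equals'' means: $\Hol(E)$ naturally lives inside $GL(E_p)$, a complex vector space, whereas $\Hol(TM)$ lives inside $GL(T_pM)$, a real vector space, so the equality $\Hol(E) = \Hol(TM) = SO(2,\R)$ should be read through the canonical embedding $GL(T_pM) \hookrightarrow GL(E_p)$, $A \mapsto A \otimes 1_\C$; I would make this explicit so that later uses of the lemma (computing fixed-point subspaces $T(\widehat{T_pM}_-)^{\Hol}$) are unambiguous. With that convention stated, the proof is just the two displayed observations above.
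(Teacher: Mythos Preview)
Your proof is correct and is essentially a fleshed-out version of the paper's argument: the paper's one-line proof simply observes that $E$ is the direct sum $TM \oplus \sqrt{-1}\,TM$, which is exactly the fact underlying your claim that parallel transport on $E$ is $P_\gamma \otimes 1_\C$. Your extra care in spelling out the embedding $GL(T_pM)\hookrightarrow GL(E_p)$ and what ``equals'' means is a welcome clarification that the paper leaves implicit.
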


\begin{proof}
This follows from the observation that the bundle $E$ is essentially the direct sum $TM\oplus \sqrt{-1}\cdot TM$. 
\end{proof}

We will also consider the tensor bundle $E^{\otimes k}$ for each $k\in \Z_+$. 

\begin{lemma} There is a natural surjective homomorphism $\Hol(E) \to \Hol(E^{\otimes k})$ of holonomy groups, where $g\in \Hol(E)$ acts on each fiber $E^{\otimes k}_p$ by 
$$g(v_1\otimes \cdots \otimes v_k) = gv_1\otimes \cdots \otimes gv_k.$$
for any $v_1, ..., v_k\in E_p$. 
\end{lemma}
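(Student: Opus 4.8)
The plan is to exhibit the map directly from parallel transport and show it is a well-defined group homomorphism that is surjective. First I would recall that the connection on $E^{\otimes k}$ is the tensor-product connection, characterized by $\nabla(v_1 \otimes \cdots \otimes v_k) = \sum_{j} v_1 \otimes \cdots \otimes \nabla v_j \otimes \cdots \otimes v_k$ for sections $v_1, \dots, v_k \in \Gamma(E)$. The key elementary fact is that parallel transport along a piecewise smooth path $\gamma$ from $p$ to $q$ commutes with tensor products: if $P_\gamma^E \colon E_p \to E_q$ denotes parallel transport in $E$, then parallel transport in $E^{\otimes k}$ is $P_\gamma^{E^{\otimes k}} = (P_\gamma^E)^{\otimes k}$, i.e. $P_\gamma^{E^{\otimes k}}(v_1 \otimes \cdots \otimes v_k) = P_\gamma^E v_1 \otimes \cdots \otimes P_\gamma^E v_k$. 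This is because $t \mapsto (P_{\gamma|_{[0,t]}}^E v_1) \otimes \cdots \otimes (P_{\gamma|_{[0,t]}}^E v_j)$ is a section along $\gamma$ whose covariant derivative vanishes, by the Leibniz rule above and the fact that each factor is parallel; by uniqueness of parallel transport it must equal $P_{\gamma|_{[0,t]}}^{E^{\otimes k}}$ applied to the initial value.

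Next I would apply this with $\gamma$ a contractible loop based at $p$. Then $P_\gamma^E \in \Hol(E)$ and $P_\gamma^{E^{\otimes k}} = (P_\gamma^E)^{\otimes k} \in \Hol(E^{\otimes k})$, so the assignment $g \mapsto g^{\otimes k}$ (where $g^{\otimes k}(v_1 \otimes \cdots \otimes v_k) = g v_1 \otimes \cdots \otimes g v_k$) sends holonomy elements of $E$ to holonomy elements of $E^{\otimes k}$. That this is a group homomorphism is immediate: $(gh)^{\otimes k} = g^{\otimes k} h^{\otimes k}$ on decomposable tensors, hence on all of $E_p^{\otimes k}$ by linearity, and $\mathrm{id}^{\otimes k} = \mathrm{id}$. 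Well-definedness is automatic since the map is defined on all of $\Hol(E)$ by the explicit formula, and the previous paragraph shows its image lands in $\Hol(E^{\otimes k})$. For surjectivity, note that $\Hol(E^{\otimes k})$ is by definition generated by the elements $P_\gamma^{E^{\otimes k}}$ over contractible loops $\gamma$ based at $p$; each such generator equals $(P_\gamma^E)^{\otimes k}$, which is the image of $P_\gamma^E \in \Hol(E)$. Since a homomorphism whose image contains a generating set is surjective, the map is onto.

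The only genuine subtlety — not really an obstacle — is the compatibility of parallel transport with tensor products, which rests on the Leibniz-rule characterization of the induced connection on $E^{\otimes k}$ and the uniqueness theorem for solutions of the parallel transport ODE; everything else is formal. I would state the compatibility as a short lemma (or cite it as standard, e.g. from \cite{P}) and then the homomorphism and surjectivity claims follow in a few lines. Note that injectivity is \emph{not} asserted and indeed can fail (for instance $-I \in SO(2,\R)$ acts trivially on $E^{\otimes 2}$), which is consistent with the wording of the statement.
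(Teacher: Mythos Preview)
Your proposal is correct and follows essentially the same route as the paper's own proof: both deduce from the Leibniz rule for the tensor-product connection that parallel transport in $E^{\otimes k}$ along a path $\gamma$ equals $(P_\gamma^E)^{\otimes k}$, and then read off the surjective homomorphism $g\mapsto g^{\otimes k}$ from this. Your write-up supplies a bit more detail (the uniqueness argument for the parallel transport ODE, the explicit check that $g\mapsto g^{\otimes k}$ is a homomorphism, and the remark on non-injectivity), but the underlying argument is the same.
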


\begin{proof}
For any piecewise smooth path $\gamma: [0, 1]\to M$ with $\gamma(0)=p$, let $P_{\gamma}: E_p \to E_{\gamma(1)}$ be the parallel transport along $\gamma$ on the bundle $E$; let  $P_{\gamma}^k: E_p^{\otimes k} \to E_{\gamma(1)}^{\otimes k}$ be the parallel transport along $\gamma$ with respect to the bundle $E^{\otimes k}$. Then 
from the definition of the connection on $E^{\otimes k}$: 
$$\nabla(X_1 \otimes \cdots \otimes X_k) = \sum_{i=1}^k X_1 \otimes \cdots \otimes \nabla(X_i) \otimes \cdots \otimes X_k, $$
it follows that
$$P_{\gamma}^k(v_1\otimes \cdots \otimes v_k) = P_{\gamma}v_1 \otimes \cdots \otimes P_{\gamma}v_k.$$
In case $\gamma(t)$ is a loop based at $p$, this essentially realizes every element of $h\in \Hol(E^{\otimes k})$ as $g^{\otimes k}$ for $g\in \Hol(E)$. So the map $g\mapsto g^{\otimes k}$ gives a natural surjective homomorphism  $\Hol(E) \to \Hol(E^{\otimes k})$. 
\end{proof}

\begin{lemma}\label{HolTensor}
The holonomy group of $E^{\otimes k}$ is determined by
$$\Hol(E^{\otimes k}) = \left\{\begin{aligned}
& SO(2, \R) &  &\text{if }k\text{ is odd},\\
& SO(2, \R)/\{\pm 1\} & & \text{if }k\text{ is even}.
\end{aligned}\right.$$
\end{lemma}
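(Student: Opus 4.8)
The plan is to exploit the surjective homomorphism $\Hol(E)\to\Hol(E^{\otimes k})$, $g\mapsto g^{\otimes k}$, established in the previous lemma, together with the identification $\Hol(E)=SO(2,\R)$. Since $SO(2,\R)$ is compact and the assignment $\phi_k\colon SO(2,\R)\to GL(E_p^{\otimes k})$, $g\mapsto g^{\otimes k}$, is a morphism of Lie groups, its image $\Hol(E^{\otimes k})$ is isomorphic to $SO(2,\R)/\ker\phi_k$. So the whole problem reduces to computing $\ker\phi_k$, i.e.\ determining which rotations act trivially on $E_p^{\otimes k}$.

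First I would diagonalize the standard action of $SO(2,\R)$ on the complexified fiber $E_p\cong\C^2$. Writing $R_\theta$ for the rotation by angle $\theta$, the two eigenlines of $R_\theta$ inside $E_p$ are precisely the $\pm\sqrt{-1}$-eigenlines of the complex structure (these are independent of $\theta$), and $R_\theta$ acts on them by $e^{\sqrt{-1}\theta}$ and $e^{-\sqrt{-1}\theta}$ respectively. Consequently $E_p^{\otimes k}$ has a basis of eigenvectors for $R_\theta^{\otimes k}$: the eigenvalue on a tensor of $a$ vectors taken from the first line and $k-a$ from the second is $e^{\sqrt{-1}(2a-k)\theta}$, with $a$ ranging over $0,1,\dots,k$, and every such value actually occurs.

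Next I would read off the kernel. One has $\phi_k(R_\theta)=R_\theta^{\otimes k}=\mathrm{id}$ exactly when $e^{\sqrt{-1}(2a-k)\theta}=1$ for every $a\in\{0,\dots,k\}$, that is, when $(2a-k)\theta\in 2\pi\Z$ for all such $a$. If $k$ is odd, the exponent $2a-k$ takes the value $1$, which forces $\theta\in 2\pi\Z$; hence $\ker\phi_k$ is trivial and $\Hol(E^{\otimes k})=SO(2,\R)$. If $k$ is even, the exponents $2a-k$ are exactly the even integers in $[-k,k]$, whose greatest common divisor is $2$, so the condition becomes $2\theta\in 2\pi\Z$, i.e.\ $\theta\in\pi\Z$; thus $\ker\phi_k=\{R_0,R_\pi\}=\{\pm 1\}$ and $\Hol(E^{\otimes k})=SO(2,\R)/\{\pm 1\}$.

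I do not anticipate a serious obstacle. The only points requiring mild care are (i) justifying that the abstract quotient $SO(2,\R)/\ker\phi_k$ is realized faithfully as $\Hol(E^{\otimes k})$ — immediate from the previous lemma once one notes $\phi_k$ is continuous and $SO(2,\R)$ compact, so it is a quotient map onto its image — and (ii) being explicit that for even $k$ the subgroup quotiented out is genuinely the order-two center $\{\pm 1\}\subset SO(2,\R)$ and nothing larger, which is exactly the gcd computation above.
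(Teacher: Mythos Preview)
Your proposal is correct and follows essentially the same route as the paper: both compute the kernel of the surjection $SO(2,\R)\to\Hol(E^{\otimes k})$ by diagonalizing the rotation action on $E_p$ and reading off the eigenvalues $e^{\sqrt{-1}(2a-k)\theta}$ on simple tensors. Your parametrization by $a$ (number of factors from the $+$ eigenline) is the paper's $m$ with $m-n=2m-k$, and your gcd argument is a slightly cleaner packaging of the paper's case analysis; the additional remark about compactness ensuring the quotient is realized as $\Hol(E^{\otimes k})$ is a useful clarification the paper leaves implicit.
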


\begin{proof}
We analyze the kernel of the homomorphism $SO(2, \R) \to \Hol(E^{\otimes k})$. Fix $p\in M$. For the matrix 
$$M(\alpha)=\begin{bmatrix} \cos\alpha & -\sin\alpha \\
\sin\alpha & \cos\alpha
\end{bmatrix},$$
let $v_{\pm1}\in E_p^\C$ be an eigenvector of $M(\alpha)$ with eigenvalue $e^{ \pm i\alpha}$. The conclusion then follows from 
$$M(\alpha)(v_{i_1}\otimes \cdots \otimes v_{i_k}) = e^{i(m-n)\alpha}(v_{i_1}\otimes \cdots \otimes v_{i_k}),$$
where $m=\#\{j: i_j = 1\}, n = \#\{j: i_j = -1\}$. 
In greater detail, if $k$ is odd then there is no way to make $e^{i(m-n)\alpha}=1$ for every choice of $i_1, ..., i_k$ unless $\alpha =0$; if $k$ is even, then there is no way to make $e^{i(m-n)\alpha}=1$ for every choice of $i_1, ..., i_k$ unless $\alpha = 0$ or $\alpha = \pi$. 
\end{proof}


\subsection{Parallel sections of the tensor powers of the complexified tangent bundle} 

\begin{lemma}
$\Pi(E^{\otimes k}) = 0$ if $k$ is odd. 
\end{lemma}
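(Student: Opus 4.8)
The plan is to identify the space of parallel sections $\Pi(E^{\otimes k})$ with the fixed-point subspace $(E_p^{\otimes k})^{\Hol(E^{\otimes k})}$ and show this subspace is trivial when $k$ is odd. By the preceding lemmas, $\Hol(E^{\otimes k})$ is the image of $\Hol(E) = SO(2,\R)$ under the $k$-th tensor power action, so a parallel section is determined by a vector in $E_p^{\otimes k}$ fixed by $g^{\otimes k}$ for every $g \in SO(2,\R)$.

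First I would diagonalize the action. Working over $\C$, choose the eigenbasis $v_{+1}, v_{-1}$ of $E_p^\C \cong \C^2$ for the rotation $M(\alpha)$, with $M(\alpha) v_{\pm 1} = e^{\pm i\alpha} v_{\pm 1}$, exactly as in the proof of Lemma \ref{HolTensor}. Then the tensor monomials $v_{i_1} \otimes \cdots \otimes v_{i_k}$ with $i_j \in \{\pm 1\}$ form an eigenbasis of $E_p^{\otimes k}$, and $M(\alpha)$ scales the monomial indexed by $(i_1,\dots,i_k)$ by $e^{i(m-n)\alpha}$, where $m = \#\{j : i_j = 1\}$ and $n = \#\{j : i_j = -1\}$. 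A general vector is fixed by all $M(\alpha)$ if and only if every monomial appearing in it has $m - n = 0$, i.e. $m = n$. But $m + n = k$, so $m = n$ forces $k$ to be even; when $k$ is odd there is no such monomial, hence the fixed subspace is $\{0\}$, and therefore $\Pi(E^{\otimes k}) = 0$.

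There is essentially no serious obstacle here; the only point requiring a little care is the justification that parallel sections correspond bijectively to holonomy-fixed vectors in the fiber — this is the standard holonomy principle for the invariant section of a vector bundle with connection, and it has already been invoked in the discussion preceding Theorem \ref{MOSVA-Riemann}, so I would simply cite it. One might alternatively phrase the argument without passing to $\C$ by noting that $SO(2,\R)$ acting on the real tensor power $(T_pM)^{\otimes k}$ has no nonzero invariants for $k$ odd (its invariant tensors are built from the metric, which has even rank), but the complexified eigenbasis computation is cleaner and dovetails with Lemma \ref{HolTensor}, so that is the route I would take.
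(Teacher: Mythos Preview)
Your argument is correct: diagonalizing the $SO(2,\R)$-action on $E_p^{\otimes k}$ via the eigenbasis $v_{\pm 1}$ and observing that the weight $m-n$ can never vanish when $m+n=k$ is odd is a perfectly valid way to see that the holonomy-fixed subspace is zero.

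The paper, however, takes a much shorter route. It simply notes that $-1\in\Hol(E^{\otimes k})$: the rotation $M(\pi)=-I\in SO(2,\R)=\Hol(E)$ acts on $E_p^{\otimes k}$ as $(-I)^{\otimes k}=(-1)^k\cdot\mathrm{id}$, which for odd $k$ is $-\mathrm{id}$. Any holonomy-invariant vector $v$ then satisfies $v=-v$, so $v=0$. Your approach recovers this as the special case $\alpha=\pi$ of your eigenvalue computation, but the paper isolates that single element and avoids the full diagonalization. What your longer argument buys is that it essentially pre-computes the fixed subspace for \emph{all} $k$, anticipating Proposition~\ref{Parallel-Tensors}; the paper's one-liner is quicker here but defers that work.
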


\begin{proof}
It follows directly from $-1\in\Hol(E^{\otimes k})$. 
\end{proof}

To describe the parallel sections of the even tensor powers of $E$, we introduce the following notations: 
\begin{defn}\label{sections}
Fix $p\in M$, let $\{e_1, e_2\}$ be an orthonormal basis of $T_p M$. Then for some neighborhood $V$ of $p$, we define local sections $X_1, X_2: V \to TV$ by parallel transporting $e_1, e_2$ to every $q\in V$. Finally, we introduce the following local sections of $E$:
$$h_+ = X_1 - \sqrt{-1} X_2, h_- = X_1 + \sqrt{-1} X_2.$$
\end{defn}

\begin{exam}\label{3-8}
Let $M$ be the unit sphere $M = \{(x, y, z)\in \R^3: x^2 + y^2 + z^2 = 1\}$. For $p = (1, 0, 0)$, we take the polar coordinate
$$x = \cos \theta \sin\phi , y =  \sin \theta \sin \phi, z = \cos \phi,$$
where $\theta\in [0, 2\pi), \phi\in (0, \pi)$. Then on the neighborhood $V = M \setminus \{(0, 0, \pm 1)\}$ of $p$, the metric is of the form 
$$ds^2 = d\phi^2 + \sin^2\phi d\theta^2.$$ 
We then take 
$$X_1 = \partial_\phi, X_2 = \frac 1 {\sin \phi}\partial_\theta.$$
In this case, 
$$h_+ = \partial_\phi -  \frac {\sqrt{-1}} {\sin \phi}\partial_\theta, h_- = \partial_\phi +  \frac {\sqrt{-1}} {\sin \phi}\partial_\theta.$$
\end{exam}

\begin{exam}
Let $M$ be a complete hyperbolic surface of genus $g$. From the discussion in \cite{JS}, $M$ is realized as the orbit space $H/ \Gamma$, where $H$ is the Poincar\'e disk
$$H = \{(x,y)\in \R^2: x^2+y^2 < 1\}$$
and $\Gamma$ is a Fuchsian group (with no fixed points on $H$). For $p = (0,0)$, let $V$ be the interior of the fundamental region of $\Gamma$ containing $p$. In other words, $V$ is the interior of a hyperbolic polygon with $2g$ sides. Viewed as a coordinate chart of $M$ near $p$, $V$ can be endowed with a metric of the form
$$ds^2 = \frac {4(dx^2 + dy^2)}{(1-x^2-y^2)^2}. $$
We then take 
$$X_1 = \frac {1-x^2-y^2} 2\partial_x, X_2 = \frac {1-x^2-y^2} 2\partial_y.$$
In this case, 
$$h_+ = \frac {1-x^2-y^2} 2(\partial_x - \sqrt{-1}\partial_y), h_-= \frac {1-x^2-y^2} 2(\partial_x + \sqrt{-1}\partial_y).$$

\end{exam}

\begin{prop} \label{Parallel-Tensors}
Each element in the following set 
$$\left\{h_{i_1} \otimes \cdots \otimes h_{i_k} \left|\begin{aligned} & i_1, ..., i_k \in \{+, -\}, \\
&\#\{j:i_j = +\} = 
\#\{j:i_j = -\}\end{aligned}\right. \right\}$$
extend to a global section $M \to E^{\otimes k}$ and is parallel. The set form a basis of $\Pi(E^{\otimes k})$. 
\end{prop}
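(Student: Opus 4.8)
The plan is to identify $\Pi(E^{\otimes k})$ with the fixed-point subspace $(E_p^{\otimes k})^{\Hol(E^{\otimes k})}$ and compute the latter directly using the eigenvector decomposition already set up in the proof of Lemma~\ref{HolTensor}. First I would recall the standard fact (used implicitly throughout the excerpt) that a parallel section of $E^{\otimes k}$ is uniquely determined by its value at the base point $p$, and that a vector $v\in E_p^{\otimes k}$ arises from a (necessarily global, since $M$ is connected and complete) parallel section if and only if $v$ is fixed by every element of the holonomy group. So the whole statement reduces to computing $(E_p^{\otimes k})^{SO(2,\R)}$, where $SO(2,\R)$ acts via $g\mapsto g^{\otimes k}$.

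Next I would diagonalize. Using the complex eigenvectors $v_{+1}, v_{-1}\in E_p$ of $M(\alpha)$ with eigenvalues $e^{\pm i\alpha}$ (the same $v_{\pm 1}$ from Lemma~\ref{HolTensor}), the set $\{v_{i_1}\otimes\cdots\otimes v_{i_k} : i_1,\dots,i_k\in\{+,-\}\}$ is a basis of $E_p^{\otimes k}$ consisting of simultaneous eigenvectors: $M(\alpha)$ acts on $v_{i_1}\otimes\cdots\otimes v_{i_k}$ by $e^{i(m-n)\alpha}$ where $m=\#\{j:i_j=+\}$ and $n=\#\{j:i_j=-\}$. A general vector $\sum c_{I}\, v_{i_1}\otimes\cdots\otimes v_{i_k}$ is fixed by $M(\alpha)$ for all $\alpha$ precisely when $c_I=0$ whenever $m\neq n$; hence the fixed-point space has as a basis exactly the ``balanced'' monomials $v_{i_1}\otimes\cdots\otimes v_{i_k}$ with $m=n$ (which forces $k$ even, consistent with the preceding lemma). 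It remains to translate this basis of $(E_p^{\otimes k})^{\Hol}$ back into the asserted basis in terms of $h_+,h_-$: since $v_{\pm1}$ is proportional to $h_\pm$ at $p$ (choosing the orthonormal frame $e_1=X_1(p)$, $e_2=X_2(p)$, the vectors $X_1\mp iX_2$ are eigenvectors of $M(\alpha)$ with eigenvalue $e^{\pm i\alpha}$), each balanced monomial $h_{i_1}\otimes\cdots\otimes h_{i_k}$ evaluated at $p$ is a nonzero scalar multiple of the corresponding $v_{i_1}\otimes\cdots\otimes v_{i_k}$, so these sections are parallel and their values at $p$ span the fixed-point space; linear independence at $p$ gives linear independence of the sections.

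Finally I would note that each such $h_{i_1}\otimes\cdots\otimes h_{i_k}$ a priori is only defined on the neighborhood $V$ of Definition~\ref{sections}, but being parallel on $V$ and $M$ being connected, it extends uniquely to a global parallel section of $E^{\otimes k}$; equivalently, its value at $p$ lies in $(E_p^{\otimes k})^{\Hol}$ and one transports it everywhere. The main obstacle, or rather the point requiring a little care, is the bookkeeping in the correspondence $v_{\pm 1}\leftrightarrow h_\pm$: one must check that the holonomy-fixed condition ``$m=n$'' matches the stated condition $\#\{j:i_j=+\}=\#\{j:i_j=-\}$ under this identification, and that no additional invariants appear for $k$ even when $\Hol(E^{\otimes k})=SO(2,\R)/\{\pm1\}$ rather than $SO(2,\R)$ — but this is automatic since $-1$ acts trivially on every balanced monomial anyway, so the fixed-point space is the same. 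Everything else is linear algebra over the eigenbasis.
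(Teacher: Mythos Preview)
Your proposal is correct and follows essentially the same route as the paper: identify $\Pi(E^{\otimes k})$ with the holonomy-fixed subspace at $p$, diagonalize the $SO(2,\R)$-action using the eigenvectors $h_\pm|_p$ (the paper computes directly that $M(\alpha)h_\pm|_p = e^{\pm i\alpha}h_\pm|_p$, rather than going through the $v_{\pm1}$ of Lemma~\ref{HolTensor} and then identifying), read off that the balanced monomials span the fixed subspace, and globalize via parallel transport using completeness of $M$. Your extra remark that passing from $SO(2,\R)$ to $SO(2,\R)/\{\pm1\}$ does not enlarge the fixed-point space is a nice sanity check the paper leaves implicit.
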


\begin{proof}
Fix $p\in M$. Let $M(\alpha)\in SO(2, \R)$ be the matrix as in Lemma \ref{HolTensor}, which acts on $e_1, e_2\in T_pM$ by
\begin{align*}
    M(\alpha)e_1 &= \cos(\alpha) e_1 + \sin(\alpha) e_2,\\ 
    M(\alpha)e_2 &= -\sin(\alpha) e_1 + \cos(\alpha) e_2.
\end{align*}
With this action, $M(\alpha)$ acts on $h_+|_p, h_-|_p$ in the fiber $E_p$ by 
$$M(\alpha) h_+|_p = e^{i\alpha} h_+|_p, M(\alpha)h_-|_p = e^{-i\alpha} h_-|_p. $$
In other words, $(h_\pm)_p$ is an eigenvector of $M(\alpha)$ with eigenvalue $e^{\pm i\alpha}$. 

To determine the parallel sections $\Pi(E^{\otimes k})$, it suffices to study the fixed-point subspace of $(E_p)^{\otimes k}$ under the action of $SO(2, \R)/\{\pm 1\}$. By the same computation shown in Lemma \ref{HolTensor}, we know that 
$$M(\alpha)(h_{i_1}|_p\otimes \cdots \otimes h_{i_k}|_p) = h_{i_1}|_p \otimes \cdots \otimes h_{i_k}|_p$$
if and only if the number of $+$ appearing in $i_1, ..., i_k$ coincides with the number of $-$. Then by simple linear algebra, the set of vectors 
$$\{h_{i_1}|_p\otimes \cdots \otimes h_{i_k}|_p: \#\{j: i_j=+\} = \#\{j: i_j = -\}\}$$
form a basis of the fixed-point subspace $((E_p^\C)^{\otimes k})^{\Hol(E^{\otimes k})}$. 

Since $M$ is complete, every point on $M$ is connected to $p$ by some geodesic path. Every element in this fixed-point subspace $E_p^{\Hol(E^{\otimes k})}$ thus defines a parallel section globally on $M$ via parallel transport along geodesic paths. Moreover, the parallel section defined by $h_{i_1}|_p\otimes \cdots \otimes h_{i_k}|_p$ coincides with the local section $h_{i_1}\otimes \cdots \otimes h_{i_k}$ on $V$, since the latter is also defined via parallel transports. Thus the latter extends to global sections. 
\end{proof}

\subsection{Parallel sections of the tensor algebra of the affinized tangent bundle}
Consider the following affinization of $E$:  
\begin{align*}
    \widehat{E}&= TM\otimes_\R(M\times \C[t, t^{-1}])\oplus (M\times \C k) \\
    &= E\otimes(M\times \C[t, t^{-1}])\oplus (M\times \C k)\\
    &= \widehat{E}_- \oplus \widehat{E}_0 \oplus \widehat{E}_+,
\end{align*}
where
\begin{align*}
    \widehat{E}_\pm &= E \otimes (M \times t^{\pm 1}\C[t^{\pm 1}])\\
    &= \bigoplus_{k=1}^\infty E \otimes (M \times \C t^{\pm k}),\\
    \widehat{E}_0 &= E\otimes (M\times \C t^0) \oplus (M\times \C \mathbf{k}).
\end{align*}
We will need to use the tensor algebra bundle 
$$T(\hat{E}) = (M \times \C) \oplus \hat{E} \oplus \hat{E}^{\otimes 2} \oplus \cdots.$$ 
For each $p\in M$, the fiber $T(\widehat{E})_p$ is simply the tensor algebra of the vector space $\widehat{E_p}$:
$$T(\widehat{E})_p = T(\widehat{E_p})= \C \oplus \widehat{E}_p \oplus (\widehat{E}_p)^{\otimes 2}\oplus \cdots. $$

We construct the bundles $T(\hat{E}_\pm), T(E) $ and $T(M\times \C\mathbf{k})$ similarly. We will rewrite these bundles in the following way:
\begin{align*}
    T(\widehat{E}_\pm) &= \bigoplus_{n=0}^\infty \bigoplus_{k=1}^n\bigoplus_{\substack{m_1+\cdots+  m_k=n\\ m_1, ..., m_k \in \Z_+}} (E \otimes_{\C} (M \times \C t^{\pm m_1})) \otimes_\C \cdots \otimes_\C (E \otimes_\C ({M \times \C t^{\pm m_k}})).\\
    T(E) &= \bigoplus_{n=0}^\infty (E\otimes \C t^0)^{\otimes n} = \bigoplus_{n=0}^\infty E^{\otimes n}. \\
    T(M \times \C \mathbf{k}) &= \bigoplus_{n=0}^\infty (M\times \C \mathbf{k})^{\otimes n} = \bigoplus_{n=0}^\infty (M\times \C \mathbf{k}^{n}).
\end{align*}

We will need to use the parallel sections of all these bundles. Note that essentially $T(\widehat{E}_\pm)$ and $T(E)$ are direct sums of $E^{\otimes n}$.
The following lemma will then apply to determine the parallel section of these bundles. 

\begin{lemma}
Let $B_1, B_2, ...$ be a sequence of vector bundles on $M$. Let $B = \bigoplus_{i=1}^\infty B_i$. Then the parallel sections of $B$ is the direct sum of the parallel sections of $B_i$, i.e., $\Pi(B) = \bigoplus_{i=1}^\infty \Pi(B_i)$.
\end{lemma}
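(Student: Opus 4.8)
The plan is to prove both inclusions. The inclusion $\bigoplus_{i=1}^\infty \Pi(B_i) \subseteq \Pi(B)$ is essentially immediate: a section that is parallel in some summand $B_i$ is parallel in $B$, because the connection on $B = \bigoplus_i B_i$ restricts to the connection on each $B_i$ (the direct-sum connection is block-diagonal, $\nabla(s_1, s_2, \dots) = (\nabla s_1, \nabla s_2, \dots)$), and a finite sum of parallel sections is parallel. For the reverse inclusion $\Pi(B) \subseteq \bigoplus_{i=1}^\infty \Pi(B_i)$, I would first observe that since $B$ is a direct sum of bundles, any smooth section $s \in \Gamma(B)$ decomposes canonically as $s = \sum_i s_i$ with $s_i \in \Gamma(B_i)$, where only finitely many $s_i$ are nonzero at any given point; one should be slightly careful here about whether "section" allows infinitely many nonzero components, but in the graded-algebra-bundle setting of the paper the relevant spaces are finite direct sums in each degree, so this is not an issue — alternatively one restricts attention to a fixed finite sub-sum containing the support of $s$.

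Given $s = \sum_i s_i$ parallel, I would use the fact that the projection $\pi_i \colon B \to B_i$ is a parallel bundle map (it commutes with the connection, again because the connection is block-diagonal). Hence $\nabla s_i = \nabla(\pi_i s) = \pi_i(\nabla s) = 0$, so each $s_i$ is itself parallel, i.e. $s_i \in \Pi(B_i)$, which gives $s \in \bigoplus_i \Pi(B_i)$. This completes the argument.

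The only genuinely delicate point — and the one I would flag as the main obstacle if one wants full rigor — is the interpretation of the infinite direct sum $B = \bigoplus_{i=1}^\infty B_i$ as a bundle and of its sections: for an infinite direct sum one must decide whether sections are allowed to have infinitely many nonzero components (a "restricted product" versus a genuine product). In the intended application $B = T(\widehat E_\pm)$ or $T(E)$, the grading by weight makes each graded piece a finite direct sum of copies of $E^{\otimes n}$, and a parallel section, being determined by its value in a single fiber together with the holonomy action, automatically lies in finitely many homogeneous components; so the subtlety evaporates. I would state the lemma with the understanding that $\Gamma(B)$ and $\Pi(B)$ are taken with respect to the locally finite direct sum, and then the two-inclusion argument above goes through verbatim. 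I do not anticipate any computational difficulty; the content is purely the block-diagonal nature of the direct-sum connection and the fact that its component projections are parallel.
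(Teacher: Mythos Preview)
Your argument is correct and arguably cleaner than the paper's. The paper proves the nontrivial inclusion $\Pi(B)\subseteq\bigoplus_i\Pi(B_i)$ via the holonomy characterization: fixing $p\in M$ and a loop $\gamma$ at $p$, it observes that parallel transport in $B$ along $\gamma$ is the direct sum of the parallel transports in the $B_i$, so if $X_p=\sum_i (X_i)_p$ is fixed by $T^B_{\gamma(1)}$ then each $(X_i)_p$ is fixed by $T^{B_i}_{\gamma(1)}$; varying $\gamma$ gives $(X_i)_p\in (B_i)_p^{\Hol(B_i)}$, hence $X_i\in\Pi(B_i)$. You instead work directly with the covariant derivative, using that the projections $\pi_i:B\to B_i$ are parallel so that $\nabla s_i=\pi_i(\nabla s)=0$. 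Both routes exploit the same block-diagonal structure of the direct-sum connection; yours avoids the passage through holonomy and is one line shorter, while the paper's version dovetails with its systematic identification of $\Pi(\,\cdot\,)$ with fibers fixed by holonomy, which is the language used in the surrounding sections. Your discussion of the finite-support issue for sections of an infinite direct sum is also a point the paper leaves implicit.
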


\begin{proof} 
Obviously $\bigoplus_{i=1}^\infty \Pi(B_i) \subseteq \Pi(B)$. We show the inverse inclusion here. Let $X$ be a parallel section of $B$. Fix any $p\in M$ and piecewise smooth loop $\gamma$ based on $p$. Consider $X_p = \sum\limits_{i \text{ finite}} (X_i)_p$, which is a finite sum of components in $(B_1)_p, (B_2)_p$, ... The parallel transport $T_{\gamma(1)}^{B}$ applied on $X$ amounts to the sum of the action of $T_{\gamma(1)}^{B_i}$ on $X_i$. Since it is a direct sum, we necessarily have $T_{\gamma(1)}^{B_i}(X_i)_p = (X_i)_p$. Since $\gamma$ is arbitrarily chosen, we see that $(X_i)_p \in (B_i)_p^{\text{Hol}(B)}$. That is to say, $X_p$ is a finite sum of elements in $(B_i)_p^{\text{Hol}(B)}$. Thus $X$ is a finite sum of parallel sections in $B_i$. So we proved that $\Pi(B)\subseteq \bigoplus_{i=1}^\infty \Pi(B_i). $
\end{proof}

The lemma thus determines the parallel sections of these bundles: 
\begin{prop}\label{ParallelSec}
\begin{align*}
    \Pi(T(\widehat{E}_\pm)) &=\bigoplus_{n=0}^\infty \bigoplus_{k=1}^{n}\bigoplus_{\substack{m_1+\cdots+  m_k=n\\ m_1, ..., m_k \in \Z_+}} \Pi((E \otimes_{\R} (M \times \C t^{\pm m_1})) \otimes_\C \cdots \otimes_\C (E \otimes_\R ({M \times \C t^{\pm m_k}}))\\ 
    &=\bigoplus_{n=0}^\infty \bigoplus_{k=1}^{n}\bigoplus_{\substack{m_1+\cdots+  m_k=n\\ m_1, ..., m_k \in \Z_+}} \text{span} \left\{ (h_{i_1} \otimes t^{\pm m_1}) \otimes \cdots \otimes (h_{i_k} \otimes t^{\pm m_k}): \begin{aligned}  & i_1, ..., i_k \in \{+, -\},\\
    &  \#\{j: i_j=+\}=\#\{j: i_j=-\} \}\end{aligned}\right\}\\
    \Pi(T(E)) &= \bigoplus_{n=0}^\infty \Pi((E\otimes (M\times \C t^0))^{\otimes n}). \\
    &= \bigoplus_{n=0}^\infty \text{span}\left\{(h_{i_1}\otimes t^0 )\otimes \cdots \otimes (h_{i_n}\otimes t^0): \begin{aligned}  & i_1, ..., i_n \in \{+, -\}, \\
    &  \#\{j: i_j=+\}=\#\{j: i_j=-\} \}\end{aligned}\right\}.\\
    \Pi(M \times \C \mathbf{k}) & =  \bigoplus_{n=0}^\infty \Pi(M\times \C \mathbf{k}^n) = \bigoplus_{n=0}^\infty \C \mathbf{k}^n. 
\end{align*}

\begin{proof}
The structure of $\Pi(T(\widehat{E}_\pm))$ and $\Pi(T(E))$ follows directly from the lemma. The structure of $\Pi(M \times \C \mathbf{k})$ follows from the fact that any smooth function $f$ satisfying $\nabla_{\dot\gamma}f = 0$ for every path $\gamma$ is constant. 
\end{proof}

\end{prop}

\subsection{Structure of the MOSVA}\label{3-5}  With the knowledge of the parallel sections, we now explicitly identify the MOSVA constructed by Huang in \cite{H-MOSVA-Riemann}.

\begin{thm} 
Let $V(l, \one)$ be the MOSVA constructed by Huang in \cite{H-MOSVA-Riemann} (cf. Theorem \ref{MOSVA-Riemann}), except that $\mathbf{k}$ acts by a nonzero scalar $l\in \C^{\times}$. Then the vectors
$$h_{i_1}(-m_1)\cdots h_{i_k}(-m_k)\one, i_1, ..., i_k\in \{+, -\}, \#\{j: i_j=+\}=\#\{j: i_j=-\}$$
form a basis for $V(l, \one)$.  Together with the following vertex operator action
\begin{align*}
& Y(h_{i_1}(-m_1) \cdots h_{i_k}(-m_k)\one, x) \\
& \qquad= \nord \frac 1 {(m_1-1)!} \frac {d^{m_1-1}}{dx^{m_1-1}}h_{i_1}(x) \cdots \frac 1 {(m_k-1)!} \frac {d^{m_k-1}}{dx^{m_k-1}}h_{i_k}(x) \nord    
\end{align*}
defines a MOSVA structure on $V(l, \one)$. 
\end{thm}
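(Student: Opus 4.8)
The plan is to identify $V_U(l,\one) = \Pi(T(\widehat{E}_-))\one$ with a concrete subspace of the noncommutative Heisenberg MOSVA $T(\widehat{T_pM}_-)$ of Theorem \ref{NoncomHeis}, and then transport the MOSVA structure along this identification. Fix a base point $p\in M$ and an orthonormal basis $\{e_1,e_2\}$ of $T_pM$; set $f_\pm = e_1\mp i e_2\in E_p$, so $(h_\pm)_p = f_\pm$. First I would invoke Theorem \ref{MOSVA-Riemann} together with the earlier discussion in Section 2.4, which says that $\Pi(T(\widehat{TM}_-))$ — equivalently the fixed-point subspace $T(\widehat{T_pM}_-)^{\Hol(T(\widehat{TM}_-))}$ — carries a MOSVA structure realized as a subalgebra of $T(\widehat{T_pM}_-)$; note that $T_pM^\C = E_p$, so $\widehat{T_pM}_- = \widehat{E_p}{}_-$. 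Since the complexified Heisenberg construction is linear over $\C$, Theorem \ref{NoncomHeis} applies verbatim with $\mathfrak h^\C = E_p$ and gives the vertex operator formula in terms of $h_i(x)$.

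The key step is then to match the basis in the statement with the fixed-point subspace. By Proposition \ref{ParallelSec} applied to $\widehat{E}_-$, the parallel sections $\Pi(T(\widehat{E}_-))$ are spanned exactly by the products $(h_{i_1}\otimes t^{-m_1})\otimes\cdots\otimes(h_{i_k}\otimes t^{-m_k})$ with $\#\{j:i_j=+\} = \#\{j:i_j=-\}$; under the fiberwise identification at $p$ and the action notation $h_{\pm}(-m) := h_\pm\otimes t^{-m}$, these correspond to the vectors $h_{i_1}(-m_1)\cdots h_{i_k}(-m_k)\one$. I would observe that the module $V_U(l,\one)$ was \emph{defined} in Section \ref{3-5} as $\Pi(T(\widehat{E}_-))\one$ with $\Pi(T(E))\one = \C\one$ (all covariant derivatives of the constant function vanish) and $\mathbf k$ acting by $l$; by the PBW-type statement for $N(\widehat{E})$ and the induction, these vectors are linearly independent, hence a basis. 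The identification is compatible with the Heisenberg action: $h_\pm(k)$ acts on $V_U(l,\one)$ through the associative algebra $\Pi(T(\widehat{E}_-))\otimes\Pi(T(\widehat{E}_+))\otimes\Pi(T(E))\otimes\Pi(T(\C\mathbf k))$ exactly as the corresponding operators $f_\pm(k)$ act on $T(\widehat{E_p}_-)$, because the relations \eqref{IdealN(E)} restricted to the sections $h_\pm$ have constant structure constants $(h_i,h_j)$ matching $(f_i,f_j)$.

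From here the MOSVA axioms are inherited: the vertex operator map $Y$ on $V_U(l,\one)$ is the restriction of the one on $T(\widehat{E_p}_-)$, which by Theorem \ref{MOSVA-Riemann} preserves the fixed-point (parallel-section) subspace; the grading, vacuum, $D$-derivative and weak associativity with pole-order axioms of Definition \ref{DefMOSVA} then hold for $V_U(l,\one)$ because they hold for the ambient MOSVA and the subalgebra is closed under all the relevant operations. Finally I would check that the displayed vertex operator formula is literally the specialization of the formula in Theorem \ref{NoncomHeis} to the generators $h_{i_j}$, with the normal ordering $\nord\ \cdot\ \nord$ as defined there; since the $h_\pm$-modes satisfy the same bracket relations as the $h_i(n)$ in the Heisenberg algebra, no new verification is needed.

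The main obstacle, and the only genuinely non-formal point, is justifying that the vertex operators of parallel sections land again in parallel sections — i.e., that $\Pi(T(\widehat{E}_-))$ is a MOSVA \emph{subalgebra} of $T(\widehat{E_p}_-)$, not merely a subspace. This is precisely the content of Theorem \ref{MOSVA-Riemann} (Huang's theorem that holonomy acts by MOSVA automorphisms and the fixed points form a sub-MOSVA), so I would cite it; the work in this section reduces to making the bundle-theoretic identifications above explicit and checking the constant-structure-constant condition, both of which are routine given Propositions \ref{Parallel-Tensors} and \ref{ParallelSec}.
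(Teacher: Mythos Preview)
Your proposal is correct and coincides with the paper's opening move: the paper begins by noting that for $l=1$ the result is \cite{H-MOSVA-Riemann}, Proposition 3.3, with the extension to general $l\in\C$ a trivial modification --- which is precisely your citation-based strategy of identifying $V_U(l,\one)$ with the holonomy fixed-point subspace and invoking Theorem \ref{MOSVA-Riemann}. The basis claim via Proposition \ref{ParallelSec} is handled the same way in both.

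Where the paper diverges is that, ``for exposition purposes,'' it then supplies a direct computational proof rather than resting on the citation. Concretely, it verifies closure of $V$ under the vertex operators by hand: writing out $\nord a_1(p_1)\cdots a_{k_1}(p_{k_1})\nord$ acting on a basis vector, it observes that each commutator $[h_i(p),h_j(-n)]=lp\delta_{p,n}(h_i,h_j)$ removes one $+$ and one $-$ simultaneously, so the balance $\#\{+\}=\#\{-\}$ is preserved in every surviving term. It then checks weak associativity with the pole-order condition by invoking the explicit product and iterate formulas from \cite{H-MOSVA} (Corollary 4.9 and formula (5.29), with an extra $l^i$ factor) and bounding the lowest power of $x_1$ in terms of $m_1+\cdots+m_{k_1}$ and $r_1+\cdots+r_{k_3}$ only. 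Your structural argument is cleaner and fully adequate; the paper's hands-on route buys self-containment, makes the dependence on the central charge $l$ visible, and exhibits the pole-order bound explicitly rather than inheriting it from the ambient MOSVA.
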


\begin{proof}
For $l=1$ the theorem has been proved in \cite{H-MOSVA-Riemann}, Proposition 3.3. The generalization to $l\in \C^\times$ is a trivial modification of the whole process. For exposition purposes, we will sketch a direct proof using the computational results in \cite{H-MOSVA} modified by the general central charge.

For convenience, we use $V$ to denote the space $V(l, \one)$. The grading of $V$ is given by specifying $V_n$ to be the span of the vectors $h_{i_1}(-m_1)\cdots h_{i_k}(-m_k)\one$, with $i_1, ..., i_k$ satisfying the conditions in the statement, and $m_1+\cdots + m_k = n$. From Proposition \ref{ParallelSec}, $n\geq 0$. So the grading is lower bounded. 

Let $u = a_1(-m_1) \cdots a_{k_1}(-m_{k_1})\one$ and $v = b_1(-n_1) \cdots b_{k_2}(-n_{k_2})\one$, $a_1, ..., a_{k_1}, b_1, ..., b_{k_2}\in \{h_+, h_-\}, m_1, ..., m_{k_1}, n_1, ..., n_{k_2}\in \Z_+$, the coefficient of each power of $x$ in $Y(u, x)v$ is in $V$. In fact, the coefficients of each power of $x$ is a sum of elements of the form 
$$\nord a_1(p_1)\cdots a_{k_1}(p_{k_1})\nord b_1(-n_1)\cdots b_{k_2}(-n_{k_2})\one.$$
For every such $p_1, ..., p_k$, let $\sigma$ be the unique element in $S_k$ satisfying the condition \begin{eqnarray}
\sigma(1) < \cdots < \sigma(\alpha), & \sigma(\alpha+1)< \cdots< \sigma(\beta), &  \sigma(\beta)<\cdots < \sigma(k_1), \nonumber\\
p_{\sigma(1)}, ...,  p_{\sigma(\alpha)}< 0,& p(\sigma_{\alpha+1}),  ..., p(\sigma_\beta) >0, & p(\sigma_{\beta+1}), ..., p(\sigma_{k_1})= 0,\nonumber \end{eqnarray}
so that the element can be written as
\begin{align*}
    & a_{\sigma(1)}(p_{\sigma(1)}) \cdots a_{\sigma(\alpha)} (p_{\sigma(\alpha)}))
    a_{\sigma(\alpha+1)}(p_{\sigma(\alpha+1)}) \cdots a_{\sigma(\beta)} (p_{\sigma(\beta)}))\\
    & \quad \cdot a_{\sigma(\beta+1)}(0) \cdots a_{\sigma(k_1)} (0))b_1(-n_1)\cdots b_{k_2}(-n_{k_2})\one.
\end{align*}
From the relations of the algebra, for $i,j\in \{+, -\}, n>0$, $h_i(0)$ commutes with all $h_j(-n)$, while $h_i(0)\one = 0$. Thus if there exists some $j$ such that $p_j = 0$, the element is simply zero, which is certainly in $V$. 

Otherwise, if $p_1, ..., p_{k_1}\neq 0$, then $\beta = k_1$. The element in question would then be 
\begin{align*}
    & a_{\sigma(1)}(p_{\sigma(1)}) \cdots a_{\sigma(\alpha)} (p_{\sigma(\alpha)}))
    a_{\sigma(\alpha+1)}(p_{\sigma(\alpha+1)}) \cdots a_{\sigma(\beta)} (p_{\sigma(\beta)})) b_1(-n_1)\cdots b_{k_2}(-n_{k_2})\one.
\end{align*}
From the relation of the algebra, for $i,j\in \{+, -\}$, $p, q>0$, the commutator of $h_i(p)$ and $h_j(-n)$ is $l p\delta_{p,n} (h_i, h_j)$. Notice that every time we swap the position of $h_i(p)$ and $h_j(-n)$, the number of $+$ and $-$ in the commutator term are both lowered by 1. So at the end of the day when  all $h_i(p)$ with $p>0$ are positions before $\one$, the number of $+$ and $-$ in all the extra commutator terms are still kept the same. This shows that the elements are all in $V$. Thus we proved that $Y(u, x)v \in V[[x]]$. 
 
Now we argue the weak associativity. From Corollary 4.9 in \cite{H-MOSVA}, for every $a_1, ..., a_{k_1}, b_1, ..., b_{k_2} \in \{h_+, h_-\}$, $m_1, ..., m_{k_1}, n_1, ..., n_{k_2} \in \Z_+$, 
\begin{align*}
    & Y(a_1(-m_1)\cdots a_{k_1}(-m_{k_1})\one, x_1)Y(b_1(-n_1)\cdots b_{k_2}(-n_{k_2})\one, x_2)  \\
    & \quad = \sum_{i=0}^{\min\{k_1,k_2\}} \sum_{\substack{k_1\geq p_1 > \cdots > q_i \geq 1\\ 0\leq q_1 < \cdots < q_i \leq k_2}} l^i n_{q_1}\cdots n_{q_i}(a_{p_1}, b_{q_1})\cdots (a_{p_i}, b_{q_i})\\
    & \qquad \quad \cdot \binom{-n_{q_1}-1}{m_{p_1}-1}\cdots \binom{-n_{q_i}-1}{m_{p_i}-1} (x_1-x_2)^{-m_{p_1}-n_{q_1}-\cdots - m_{p_i}-n_{q_i}}\\
    & \qquad \quad \cdot \nord \left(\prod_{p\neq p_1, ..., p_i}\frac 1 {(m_p-1)!} \frac{\partial^{m_p-1}}{\partial x_1^{m_p-1}}a_p(x_1)\right)\left(\prod_{q\neq q_1, ..., q_i}\frac 1 {(n_q-1)!} \frac{\partial^{n_q-1}}{\partial x_2^{n_q-1}}b_q(x_2)\right)\nord,
\end{align*}
where the extra $l^i$ factor comes from a trivial generalization of Lemma 4.1 in \cite{H-MOSVA}. 

From Formula (5.29) of \cite{H-MOSVA}, we see that 
\begin{align*}
    & Y(Y(a_1(-m_1)\cdots a_{k_1}(-m_{k_1})\one, x_0)b_1(-n_1)\cdots b_{k_2}(-n_{k_2})\one, x_2) \\
    & \quad = \sum_{i=0}^{\min\{k_1,k_2\}} \sum_{\substack{k_1\geq p_1 > \cdots > q_i \geq 1\\ 0\leq q_1 < \cdots < q_i \leq k_2}} l^i n_{q_1}\cdots n_{q_i}(a_{p_1}, b_{q_1})\cdots (a_{p_i}, b_{q_i})\\
    & \qquad \quad \cdot \binom{-n_{q_1}-1}{m_{p_1}-1}\cdots \binom{-n_{q_i}-1}{m_{p_i}-1} x_0^{-m_{p_1}-n_{q_1}-\cdots - m_{p_i}-n_{q_i}}\\
    & \qquad \quad \cdot \nord \left(\prod_{p\neq p_1, ..., p_i}\frac 1 {(m_p-1)!} \frac{\partial^{m_p-1}}{\partial x_0^{m_p-1}}a_p(x_2+x_0)\right)\left(\prod_{q\neq q_1, ..., q_i}\frac 1 {(n_q-1)!} \frac{\partial^{n_q-1}}{\partial x_2^{n_q-1}}b_q(x_2)\right)\nord,\end{align*}
where negative powers of $x_2+x_0$ are expanded as a formal series in $x_2, x_0$ with lower truncated powers of $x_0$.

To show the weak associativity, we fix $i$ and $p_1, ..., p_i$ and compute the lower bound of power of $x_1$ of the series from the action of
$$\nord \left(\prod_{p\neq p_1, ..., p_i}\frac 1 {(m_p-1)!} \frac{\partial^{m_p-1}}{\partial x_0^{m_p-1}}a_p(x_1)\right)\left(\prod_{q\neq q_1, ..., q_i}\frac 1 {(n_q-1)!} \frac{\partial^{n_q-1}}{\partial x_2^{n_q-1}}b_q(x_2)\right)\nord$$
on an element $v = c_1(-r_1)\cdots c_{k_3}(-r_{k_3})\one$. 
For each $p \neq p_1, ..., p_i$, we compute the singular part of the term with respect to $a_p(x_1)$, namely,   $$\left(\frac{\partial^{m_p-1}}{\partial x_1^{m_p-1}}a_p(x_1)\right)^-= \sum_{s_p \geq 0}a_p (s_p)( - s_p - 1) \cdots ( - s_p - {m_p} + 1)x_1^{ - s_p - {m_p}}.$$
The term with the lowest power of $x_1$ will contain no $b_q(t)$ with $t>0$. Thus all such $a_p(s_p)$ would have the priority acting on $v$, resulting in 
$$\prod_{p\neq p_1, ..., p_i} a_p(s_p)c_1(-r_1)\cdots c_{k_3}(-r_{k_3})\one,$$
which is zero when
$$\sum_{p\neq p_1, ..., p_i} s_p > r_1 + \cdots + r_{k_3}.$$
So the power of $x_1$ is
\begin{align*}
    \sum_{p\neq p_1, ..., p_i} (-s_p-m_p) &\geq -r_1 -\cdots - r_{k_3} - \sum_{p\neq p_1, ..., p_i} m_p\\
    & \geq -r_1 -\cdots - r_{k_3} - m_1 - \cdots - m_{k_1}. 
\end{align*}
The lower bound we obtained at the right-hand-side works for all possible choices of $i$ and $p_1, ..., p_i$. Moreover, it depends only on the element $a_1(-m_1)\cdots a_{k_1}(-m_{k_1}) \one$ and $c_1(-r_1)\cdots c_{k_3}(-r_{k_2})\one$. So the pole-order condition is verified. 

Other axioms are verified similarly as in \cite{H-MOSVA}. 
\end{proof}

\begin{cor}
The graded dimension of the MOSVA $V(l, \one)$ is 
$$1 + \sum_{n=2}^\infty 2(n-1) \cdot {}_2F_1 \left(1-\frac n 2, \frac{3-n} 2; 2 ; 4\right)q^{n}.$$
\end{cor}

\begin{proof}
It suffices to argue the formula for $V(l, \one)$. Let $n\in \Z_+$. We compute $\dim V_{(2n)}$ and $\dim V_{(2n+1)}$ separately.

For each fixed $k\in \Z_+$, the set of ordered $k$-tuples $(m_1, ..., m_k)$ of positive numbers such that $m_1 + \cdots + m_k = 2n$ is well-known to be $\binom{2n-1}{k-1}$. Also within $i_1, ..., i_k$, the number of $+$ must be the same as the number of $-$. Thus $k$ must be an even number. Write $k = 2p$. Then the number of possible assignments of $+$ and $-$ to $i_1, ..., i_{2p}$ is simply $\binom{2p}p$. Summing up all possible choices of $p$, we have
$$\dim V_{(2n)} = \sum_{p=1}^n \binom{2p} p \binom{2n-1}{2p-1} = \sum_{p=1}^n \frac{2(2n-1)!}{p!(p-1)!(2n-2p)!} $$
Recall that in general, 
$${}_2F_1(a,b;c;z) = \sum \limits_{q = 0}^\infty  \frac{{a(a + 1) \cdots (a + q - 1)b(b + 1) \cdots (b + q - 1)}}{{c(c + 1) \cdots (c + q - 1)}}\frac{{{z^q}}}{{q!}}$$
Putting in $a=1-n, b=3/2-n, c=2, z=4$, we have
\begin{align*}
    & {}_2F_1(1-n, \frac 3 2 - n; 2; 4) \\
    & \quad = \sum_{q=0}^\infty \frac{(1-n)(1-n+1)\cdots (1-n+q-1)\cdot (\frac 3 2 - n)(\frac 3 2 - n + 1) \cdots (\frac 3 2 - n + q -1)}{2(2+1)\cdots (2+q-1)}\frac{2^q}{q!}\\
    & \quad = \sum_{q=0}^{n-1} \frac{(-1)^q(n-1)(n-2)\cdots (n-q)\cdot (3 - 2n)(3 - 2n + 2) \cdots (3 - 2n + 2q -2)}{(q+1)!}\frac{2^q}{q!}\\
    & \quad = \sum_{q=0}^{n-1} \frac{(2n-2)(2n-4)\cdots (2n-2q)\cdot(2n-3)(2n-5) \cdots (2n-2q-1)}{(q+1)!q!}\\
    & \quad = \sum_{q=0}^{n-1} \frac{(2n-2)!}{(2n-2q-2)!(q+1)!q!} = \sum_{p=1}^{n} \frac{(2n-2)!}{(2n-2p)!p!(p-1)!} 
\end{align*}
Thus 
$$\dim V_{(2n)}= 2(2n-1)\cdot {}_2F_1(1-n, \frac 3 2 - n; 2; 4)$$
Similarly, we compute that 
\begin{align*}
    \dim V_{(2n+1)} &= 2(2n)\cdot \sum_{p=1}^n \frac{(2n-1)!}{(2n-2p+1)!p!(p-1)!}\\
    &= 2(2n)\cdot {}_2 F_1(\frac 1 2 - n, 1-n; 2; 4)
\end{align*}
The conclusion then follows. 
\end{proof}

\begin{rema}
Note that in particular the weight-1 subspace is zero. 
If we understand $h_+(-m)$ and $h_-(-m)$ as the creation operators of certain physical objects (particles or strings), then the theorem simply says that these physical objects are always created in pairs. There does not exist one-object states in the MOSVA $V(l, \one)$.
\end{rema}

\begin{rema}
Note also that $V(l, \one)$ does not distinguish manifolds with different curvatures. Indeed, for any manifold with holonomy group $SO(2,\R)$, their MOSVAs are isomorphic.
\end{rema}


\section{Modules generated by eigenfunctions of the Laplace-Beltrami operator}

Let $V_U(l, f)$ be the $V(l, \one)$-module generated by a smooth function $f: U \to \C$ in \cite{H-MOSVA-Riemann} (cf. Section \ref{module-def}). In addition, we assume that the function $f$ satisfies
$$-\Delta f = \lambda f,$$
where $\Delta$ is the Laplace-Beltrami operator on $U$, $\lambda\in \C$. In this section, we study the structure of the module $V_U(l, f)$. In physics, eigenfunctions correspond to quantum states. So the module $V_U(l, f)$ can be understood as string-theoretic excitements of the quantum state corresponding to $f$. We will first deduce a lemma on covariant derivatives, then use it to show that every covariant derivative of an eigenfunction is a scalar multiple of the eigenfunction. Using this lemma, we identify a basis for $V_U(l, f)$. 

\subsection{Fundamental lemma of covariant derivatives}

In order to study the actions of $\Pi(T(E))$ on $f$, we will use the following theorem:

\begin{thm}\label{Comm-Cov}
Let $f: U\to \C$ be a $\C$-valued smooth function. Then for $n \geq 3$, we have
$$(\nabla^n f)(Z_1, ..., Z_{n-1}, Z_n ) - (\nabla^n f)(Z_1, ..., Z_n, Z_{n-1}) = 0.$$
And for $i = 1, ..., n-2$,
\begin{align*}
    & (\nabla^n f)(Z_1, ..., Z_i, Z_{i+1}, ..., Z_n) - (\nabla^n f)(Z_1, ..., Z_{i+1}, Z_{i}, ..., Z_n)\\
    = & \sum_{j=i+2}^n(\nabla^{n-2} f)(Z_1, ..., -R(Z_i, Z_{i+1})Z_{j}, ..., Z_n) \\
    = & (\nabla^{n-2} f)(Z_1, ..., -R(Z_i, Z_{i+1})Z_{i+2}, Z_{i+3} ..., Z_n) \\
    & + (\nabla^{n-2} f)(Z_1, ..., Z_{i+2}, -R(Z_i, Z_{i+1})Z_{i+3}, ..., Z_n) + \\
    & + \cdots \cdots \\
    & + (\nabla^{n-2} f)(Z_1, ..., Z_{i+2}, Z_{i+3}, ..., -R(Z_i, Z_{i+1})Z_n).
\end{align*}
\end{thm}

\begin{proof}
We prove the first equation by induction on $n$. For $n=3$, we have
\begin{align*}
    (\nabla^3 f)(Z_1, Z_2, Z_3) &= (\nabla_{Z_1} (\nabla^2 f))(Z_2, Z_3) \\
    & = \nabla_{Z_1} ((\nabla^2 f)(Z_2, Z_3)) - (\nabla^2 f)(\nabla_{Z_1}Z_2, Z_3) - (\nabla^2 f)(Z_2, \nabla_{Z_1}Z_3) \\
    & \quad \text{(note that $\nabla^2 f(X, Y) = \nabla^2 f(Y, X)$)} \\
    & = \nabla_{Z_1} ((\nabla^2 f)(Z_3, Z_2)) - (\nabla^2 f)(Z_3, \nabla_{Z_1}Z_2) - (\nabla^2 f)(\nabla_{Z_1}Z_3, Z_2)  = (\nabla^3 f)(Z_1, Z_3, Z_2).
\end{align*}
Assume the equation holds for $n-1$:
\begin{align*}
    (\nabla^n f)(Z_1, ..., Z_{n-1}, Z_n) & = (\nabla_{Z_1}(\nabla^{n-1}f))(Z_2, ..., Z_{n-1}, Z_n) \\ 
    & = \nabla_{Z_1}((\nabla^{n-1}f)(Z_2, ..., Z_{n-1}, Z_n)) - (\nabla^{n-1}f)(\nabla_{Z_1}Z_2, ..., Z_{n-1}, Z_n) \\
    & \quad - (\nabla^{n-1}f)(Z_2, ..., \nabla_{Z_1}Z_{n-1}, Z_n) - (\nabla^{n-1}f)(Z_2, ..., Z_{n-1}, \nabla_{Z_1}Z_n)\\
    & \quad \text{(by induction hypothesis)}\\
    & = \nabla_{Z_1}((\nabla^{n-1}f)(Z_2, ..., Z_n, Z_{n-1})) - (\nabla^{n-1}f)(\nabla_{Z_1}Z_2, ..., Z_n, Z_{n-1}) \\
    & \quad - (\nabla^{n-1}f)(Z_2, ..., Z_n, \nabla_{Z_1}Z_{n-1}) - (\nabla^{n-1}f)(Z_2, ..., \nabla_{Z_1}Z_n, Z_{n-1})\\
    & = (\nabla^n f)(Z_1, ..., Z_n, Z_{n-1}).
\end{align*}
So the first equation is proved. 

For the second equation, we first consider the case $i=1$:
\begin{align}
    &\quad (\nabla^n f)(Z_1, Z_2, Z_3, \cdots, Z_n)= (\nabla_{Z_1} (\nabla^{n-1}f))(Z_2, Z_3, \cdots, Z_n)\nonumber\\
    &= \nabla_{Z_1}((\nabla^{n-1}f) (Z_2, Z_3 ..., Z_n)) - (\nabla^{n-1}f)(\nabla_{Z_1}Z_2, Z_3, ..., Z_n) - \sum_{j=3}^n (\nabla^{n-1}f)(Z_2, ..., \nabla_{Z_1}Z_j, ..., Z_n) \nonumber\\
    &= \nabla_{Z_1}\nabla_{Z_2}((\nabla^{n-2}f) (Z_3, ..., Z_n)) - \sum_{j=3}^n \nabla_{Z_1}((\nabla^{n-2}f)(Z_3, ..., \nabla_{Z_2} Z_j, ..., Z_n))\label{Line1}\\
    & \quad -\nabla_{\nabla_{Z_1}Z_2} ((\nabla^{n-2}f)(Z_3, ..., Z_n))+\sum_{j=3}^n(\nabla^{n-2}f)(Z_3,..., \nabla_{\nabla_{Z_1}Z_2}Z_j, ..., Z_n)\label{Line2}\\
    & \quad 
    - \sum_{j=3}^n \left(\nabla_{Z_2}((\nabla^{n-2}f)(Z_3, ..., \nabla_{Z_1}Z_j, ..., Z_n) - \sum_{k=3}^{j-1}(\nabla^{n-2}f)(Z_3,..., \nabla_{Z_2}Z_k, ..., \nabla_{Z_1}Z_j, ..., Z_n)\right) \label{Line3} \\
    & \quad - \sum_{j=3}^n \left(-(\nabla^{n-2}f)(Z_3, ..., \nabla_{Z_2}\nabla_{Z_1}Z_j, ..., Z_n)-\sum_{k=j+1}^{n} (\nabla^{n-2}f)(Z_3, ..., \nabla_{Z_1} Z_j , ..., \nabla_{Z_2}Z_k, ..., Z_n)\right).\label{Line4}
\end{align}
Similarly, 
\begin{align}
    &\quad (\nabla^n f)(Z_2, Z_1, Z_3, \cdots, Z_n)= (\nabla_{Z_2} (\nabla^{n-1}f))(Z_1, Z_3, \cdots, Z_n)\nonumber\\
    & = \nabla_{Z_2}\nabla_{Z_1}((\nabla^{n-2}f) (Z_3, ..., Z_n)) - \sum_{j=3}^n \nabla_{Z_2}((\nabla^{n-2}f)(Z_3, ..., \nabla_{Z_1} Z_j, ..., Z_n))\label{Line5}\\
    & \quad -\nabla_{\nabla_{Z_2}Z_1} ((\nabla^{n-2}f)(Z_3, ..., Z_n))+\sum_{j=3}^n(\nabla^{n-2}f)(Z_3,..., \nabla_{\nabla_{Z_2}Z_1}Z_j, ..., Z_n)\label{Line6}\\
& \quad 
    - \sum_{j=3}^n \left(\nabla_{Z_1}((\nabla^{n-2}f)(Z_3, ..., \nabla_{Z_2}Z_j, ..., Z_n) - \sum_{k=3}^{j-1}(\nabla^{n-2}f)(Z_3,..., \nabla_{Z_1}Z_k, ..., \nabla_{Z_2}Z_j, ..., Z_n)\right)  \label{Line7}\\
    & \quad - \sum_{j=3}^n \left(-(\nabla^{n-2}f)(Z_3, ..., \nabla_{Z_1}\nabla_{Z_2}Z_j, ..., Z_n)-\sum_{k=j+1}^{n} (\nabla^{n-2}f)(Z_3, ..., \nabla_{Z_2} Z_j , ..., \nabla_{Z_1}Z_k, ..., Z_n)\right).\label{Line8}
\end{align}
Then in the difference, the second sum in (\ref{Line1}) cancels out with the first term in the sum of (\ref{Line7}); the first term in the sum of (\ref{Line3}) cancels out with the second sum in (\ref{Line5}); the second term in the sum of (\ref{Line3}), together with second term in the sum of (\ref{Line4}), cancel out those in (\ref{Line7}) and (\ref{Line8}). So the difference is
\begin{align*}
 & \quad (\nabla^n f)(Z_1, Z_2, Z_3, ..., Z_n) - (\nabla^n f)(Z_2, Z_1, Z_3, ..., Z_n)\\
    & = (\nabla_{Z_1}\nabla_{Z_2}-\nabla_{Z_2}\nabla_{Z_1})((\nabla^{n-2}f)(Z_3, ..., Z_n)) - \nabla_{\nabla_{Z_1}Z_2 - \nabla_{Z_2}Z_1} ((\nabla^{n-2}f)(Z_3, ..., Z_n) \\
    & \quad+ \sum_{j=3}^n (\nabla^{n-2}f)(Z_3, ..., \nabla_{\nabla_{Z_1}Z_2 -\nabla_{Z_2}Z_1} Z_j, ..., Z_n) + \sum_{j=3}^n (\nabla^{n-2}f)(Z_3, ..., (\nabla_{Z_2}\nabla_{Z_1}-\nabla_{Z_1}\nabla_{Z_2})Z_j, ..., Z_n)\\
    & = \sum_{j=3}^n (\nabla^{n-2}f)(Z_3, ..., (\nabla_{Z_2}\nabla_{Z_1}-\nabla_{Z_1}\nabla_{Z_2}+ \nabla_{\nabla_{Z_1}Z_2 - \nabla_{Z_2}Z_1})Z_j, ..., Z_n)\\
    &=\sum_{j=3}^n (\nabla^{n-2}f)(Z_3, ..., -R(Z_1, Z_2)Z_j, ..., Z_n).
\end{align*}
So the case $i=1$ is proved for arbitrary $n$. 

We proceed by induction of $i$. The base case has been proved above. Now we proceed with the inductive step. 
\begin{align*}
    & \quad (\nabla^n f)(Z_1, ..., Z_i, Z_{i+1}, ..., Z_n) = (\nabla_{Z_1}(\nabla^n f))(Z_2, ..., Z_i, Z_{i+1}, ..., Z_n) \\
    &= \nabla_{Z_1}((\nabla^{n-1} f)(Z_2, ..., Z_i, Z_{i+1}, ..., Z_n)) - \sum_{k=2}^{i-1}(\nabla^{n-1} f)(Z_2, ..., \nabla_{Z_1}Z_k, ..., Z_i, Z_{i+1}, ..., Z_n) \\
    & \quad - (\nabla^{n-1} f)(Z_2, ..., \nabla_{Z_1} Z_i, Z_{i+1}, ..., Z_n) - (\nabla^{n-1} f)(Z_2, ..., Z_i, \nabla_{Z_1} Z_{i+1}, ..., Z_n) \\
    & \quad - \sum_{k=i+2}^n  (\nabla^{n-1} f)(Z_2, ..., Z_i, Z_{i+1}, ..., \nabla_{Z_1}Z_k, ..., Z_n).
\end{align*}
Similarly, 
\begin{align*}
    & \quad (\nabla^n f)(Z_1, ..., Z_{i+1}, Z_i, ..., Z_n) = (\nabla_{Z_1}(\nabla^n f))(Z_2, ..., Z_{i+1}, Z_i, ..., Z_n) \\
    &= \nabla_{Z_1}((\nabla^{n-1} f)(Z_2, ..., Z_{i+1}, Z_i, ..., Z_n)) - \sum_{k=2}^{i-1}(\nabla^{n-1} f)(Z_2, ..., \nabla_{Z_1}Z_k, ..., Z_{i+1}, Z_i, ..., Z_n) \\
    & \quad - (\nabla^{n-1} f)(Z_2, ..., \nabla_{Z_1} Z_{i+1}, Z_i, ..., Z_n) - (\nabla^{n-1} f)(Z_2, ..., Z_{i+1}, \nabla_{Z_1} Z_i, ..., Z_n) \\
    & \quad - \sum_{k=i+2}^n  (\nabla^{n-1} f)(Z_2, ..., Z_{i+1}, Z_i, ..., \nabla_{Z_1}Z_k, ..., Z_n).
\end{align*}
We use the induction hypothesis to see that the difference is expressed as
\begin{align*}
    & \nabla_{Z_1}\left(\sum_{j=i+2}^n (\nabla^{n-3}f)(Z_2, ..., -R(Z_i, Z_{i+1})Z_j, ..., Z_n)  \right) \\ 
    & - \sum_{j=i+2}^n\sum_{k=2}^{i-1}(\nabla^{n-3}f)(Z_2, ..., \nabla_{Z_1}Z_k, ..., -R(Z_i, Z_{i+1})Z_j,..., Z_n) \\
    & - \sum_{j=i+2}^n(\nabla^{n-3}f)(Z_2, ..., -R(\nabla_{Z_1}Z_i, Z_{i+1})Z_j, ..., Z_n)\\
    & - \sum_{j=i+2}^n(\nabla^{n-3}f)(Z_2, ..., -R(Z_i, \nabla_{Z_1}Z_{i+1})Z_j, ..., Z_n)\\
    & -  \sum_{k=i+2}^n\sum_{j=i+2}^{k-1} (\nabla^{n-3}f)(Z_2, ..., -R(Z_i, Z_{i+1})Z_j, ..., \nabla_{Z_1}Z_k, ..., Z_n) \\
    & -\sum_{k=i+2}^n (\nabla^{n-3}f)(Z_2, ..., Z_{i+2},..., -R(Z_i, Z_{i+1})\nabla_{Z_1}Z_k, ..., Z_n)\\ 
    & -  \sum_{k=i+2}^n\sum_{j=k+1}^n(\nabla^{n-3}f)(Z_2, ..., Z_{i+2},..., \nabla_{Z_1}Z_k, ..., -R(Z_i, Z_{i+1})Z_j, ..., Z_n),
\end{align*}
which is equal to the right-hand-side. 
\end{proof}

\begin{rema}
We call Theorem \ref{Comm-Cov} the fundamental lemma of covariant derivatives, as it is of fundamental importance in this paper and has lots of important consequences. 
\end{rema}


\subsection{Zero-mode actions}
Now we use the lemma to compute $\Pi(T(E))f$. Recall the definition of the sectional curvature 
$$K = \frac{(R(U, V)V, U)}{(U, U)(V, V) - (U, V)^2},$$
where $U, V$ are any vector fields. Then by our assumption in Definition \ref{sections} $$(X_1, X_1) = (X_2, X_2) = 1, (X_1, X_2) = 0.$$
We compute directly that \begin{align*}
    (R(X_1, X_2)X_2, X_1) = K,\quad & (R(X_1, X_2)X_2, X_2) = 0,\\
    (R(X_1, X_2)X_1, X_1) = 0,\quad & (R(X_1, X_2)X_1, X_2) = -K.
\end{align*}
In other words, 
$$R(X_1, X_2)X_2 = KX_1, R(X_1, X_2)X_1 = -KX_2.$$
Therefore, 
\begin{align*}
    R(h_+,h_-)h_+ = 2K h_+, 
    R(h_+,h_-)h_- = -2K h_-.
\end{align*}

\begin{prop}\label{ConstMult}
Let $f$ be an eigenfunction for the Laplace-Beltrami operator of eigenvalue $\lambda$. Then for every $r\in \Z_+$, 
$$(\nabla^{2r} f)(h_+^{\otimes r} \otimes h_-^{\otimes r}) = \prod_{\alpha=1}^r (-\lambda + \alpha(\alpha-1)K) = (-\lambda)(-\lambda + 2K)\cdots (-\lambda + r(r-1)K),$$
$$(\nabla^{2r} f)(h_-^{\otimes r} \otimes h_+^{\otimes r}) = \prod_{\alpha=1}^r (-\lambda + \alpha(\alpha-1)K) = (-\lambda)(-\lambda + 2K)\cdots (-\lambda + r(r-1)K).$$
\end{prop}

\begin{proof}
We apply induction. If $r=1$, then \begin{align*}
    (\nabla^2f)(h_+\otimes h_-) &= (\nabla^2 f)((X_1-\sqrt{-1}X_2)\otimes(X_1 + \sqrt{-1} X_2)) \\
    & = (\nabla^2 f)(X_1 \otimes X_1 + X_2 \otimes X_2) + \sqrt{-1} (\nabla^2 f)(X_1 \otimes X_2 - X_2 \otimes X_1).
\end{align*} 
The second term is zero, while the first term is simply $\Delta f$. Thus we have 
$$(\nabla^2f)(h_+\otimes h_-) = -\lambda f.$$
So the base case is proved.

Assume the conclusion holds for all smaller $r$. We use Theorem \ref{Comm-Cov} to shift the $h_+$ and $h_-$ in the middle position
\begin{align*}
    \nabla^{2r}(h_+^{\otimes r} \otimes h_-^{\otimes r}) &= (\nabla^{2r}f)(h_+^{\otimes (r-1)}\otimes h_+ \otimes h_- \otimes h_-^{\otimes(r-1)}) \\
    & = (\nabla^{2r}f)(h_+^{\otimes (r-1)}\otimes h_-\otimes h_+ \otimes h_-^{\otimes (r-1)}) \\
    &\qquad - \sum_{p=0}^{r-2} (\nabla^{2r-2}f) (h_+^{\otimes(r-1)}\otimes h_-^{\otimes p} \otimes R(h_+,h_-)h_- \otimes h_-^{\otimes(r-2-p)})\\
    & = (\nabla^{2r}f)(h_+^{\otimes (r-1)}\otimes h_-\otimes h_+ \otimes h_-^{\otimes (r-1)}) +2K \sum_{p=0}^{r-2} (\nabla^{2r-2}f) (h_+^{\otimes(r-1)}\otimes h_-^{\otimes (r-1)})\\
    &= (\nabla^{2r}f)(h_+^{\otimes(r-1)}\otimes h_-\otimes h_+ \otimes h_-^{\otimes(r-1)}) + 2K(r-1) (\nabla^{2r-2})f(h_+^{\otimes(r-1)}\otimes h_-^{\otimes(r-1)}). 
\end{align*}
In other words, the price for passing $h_+$ from $r$-th position to $(r+1)$-th position is $$2K(r-1)\nabla^{2r-2}f(h_+^{\otimes (r-1)}\otimes h_-^{\otimes (r-1)}).$$ Similarly, the price of passing $h_+$ from $(r+1)$-th position to $(r+2)$-th position is 
$$2K(r-2)\nabla^{2r-2}f(h_+^{\otimes(r-1)}\otimes h_-^{\otimes(r-1)}).$$
We continue the process until $h_+$ arrives at $(2r-1)$-position and sum up the total price, to see that 
\begin{align}
    (\nabla^{2r}f)(h_+^{\otimes r}\otimes h_-^{\otimes r})& = (\nabla^{2r}f)(h_+^{\otimes (r-1)} \otimes h_-^{\otimes r-1} \otimes h_+ \otimes h_-) \nonumber \\
    & \qquad+ 2K( (r-1) + (r-2) + \cdots + 1)(\nabla^{2r-2}f)(h_+^{\otimes (r-1)}\otimes h_-^{\otimes (r-1)}).\label{Lem4-2}
\end{align}
Recall Theorem 4.1 in \cite{H-MOSVA-Riemann}: if $X$ and $Y$ are two parallel tensors of degree $m$ and $n$, then 
$$(\nabla^{m+n} f)(X, Y) = (\nabla^n (\nabla^m f(Y)))(X).$$
Thus with the conclusion of the base case, the first term on the right-hand-side of (\ref{Lem4-2}) is simply $$\nabla^{2r-2}[(\nabla^2 f)(h_+\otimes h_-)](h_+^{\otimes(r-1)}\otimes h_-^{\otimes (r-1)}) = -\lambda(\nabla^{2r-2}f)(h_+^{\otimes(r-1)}\otimes h_-^{\otimes (r-1)}). $$
Computing the second term and combine it back to (\ref{Lem4-2}), we see that
\begin{align}
    (\nabla^{2r}f)(h_+^{\otimes r}\otimes h_-^{\otimes r})& = (-\lambda+ K r(r-1)) (\nabla^{2r-2}f)(h_+^{\otimes (r-1)}\otimes h_-^{\otimes (r-1)}).\label{Lem4-2-1}
\end{align}
The first conclusion then follows from induction hypothesis. 

The second conclusion follows from an almost identical argument. We shall not repeat the details here. 
\end{proof}

\begin{prop}\label{4-4} 
Let $f$ be an eigenfunction for the Laplace-Beltrami operator of eigenvalue $\lambda$. Fix any $i_1, ..., i_{2r} \in \{+, -\}$ such that $\#\{j: i_j = +\} = \#\{j: i_j = -\} = r$. 
\begin{enumerate}
    \item There exists a single-variable polynomial $P$, such that 
    $$(\nabla^{2r}f)(h_{i_1}\otimes \cdots \otimes h_{i_{2r}}) = P(\lambda)f.$$
    \item The roots of $P(\lambda)$ are contained in \begin{align}
    \{p(p-1)K, p = 1,..., r\}.\label{setofroot}
    \end{align}
    \item If $i_{2r-t+1}=\cdots = i_{2r}$, then for every $p\leq t$, $p(p-1)K$ are roots of $P(\lambda)$ . 
\end{enumerate}
\end{prop}

\begin{proof}
We argue by induction on $r$. For $r=1$, there are only two cases of $(i_1,i_2)$: $(+, -)$ and $(-, +)$. $P(\lambda)=-\lambda$ works for both cases. (2) and (3) are obvious. 

Assume that all conclusions hold for strictly smaller $r$. Without loss of generality, we assume that $i_{2r} = i_{2r-1} = \cdots = i_{2r-t+1}= -$, $i_{2r-t} = +$ for some $t \in [1,r]$. In other words, there are $t$ consecutive $h_-$ at the rear, precede by a $h_+$. Theorem \ref{Comm-Cov} allows us to move the $h_+$ from the $(2r-t)$-th position to the $(2r-1)$-position. By a computation similar to that for (\ref{Lem4-2-1}), we have 
\begin{align*}
    & (\nabla^{2r}f)(h_{i_1}\otimes \cdots \otimes h_{i_{2r-t-1}} \otimes h_+ \otimes h_-\otimes h_- \otimes \cdots \otimes h_-)\\
    & = (-\lambda + Kt(t-1))(\nabla^{2r-2}f)((h_{i_1}\otimes \cdots \otimes h_{i_{2r-t-1}} \otimes \widehat{h_+} \otimes \widehat{h_-} \otimes h_-\otimes \cdots \otimes h_-) .
\end{align*}
Here the hat notation is introduced to show the removed terms. By the induction hypothesis, there exists a polynomial $Q(\lambda)$ satisfying (1), (2) and (3). So
\begin{align*}
    (\nabla^{2r}f)(h_{i_1}\otimes \cdots \otimes h_{i_{2r-t-1}} \otimes h_+ \otimes h_-\otimes h_- \otimes \cdots \otimes h_-) = (-\lambda + Kt(t-1))Q(\lambda)f.
\end{align*}
Therefore (1) holds with $P(\lambda) = (-\lambda + Kt(t-1))Q(\lambda)$. The roots of $P(\lambda)$, by induction hypothesis, are contained in $\{p(p-1)K: p= 1, ..., r-1\}\cup \{t(t-1)K\}$. Since $t\in [1, r]$, (2) holds for $P(\lambda)$. For $j_1 = i_1, ..., j_{2r-t}= i_{2r-t}, j_{2r-t+1} = \cdots = j_{2r-2} = -$, the induction hypothesis shows that for every $\alpha = 1, ..., t-1$, $\alpha(\alpha-1)K$ are roots for $Q(\lambda)$. Therefore (3) holds, with the additional root $t(t-1)K$ for $P(\lambda)$. 
\end{proof}

Since $\Pi(T(E))$ is spanned by $h_{i_1} \otimes \cdots \otimes h_{i_{2r}}$ in Proposition \ref{4-4}, we have thus proved the following theorem: 

\begin{thm}\label{ParallelScalar}
Let $f$ be an eigenfunction of the Laplace-Beltrami operator. Then as a vector space, $$\Pi(T(E))f = \C f.$$
In other words, the action of every parallel tensor on an eigenfunction $f$ generates only scalar multiples of $f$. 
\end{thm}

\begin{rema}
In case $f$ is an eigenfunction with real eigenvalue, then the same argument above shows that $\Pi(T(TM))f \in \R f$. 
\end{rema}

\begin{rema}
While eigenfunctions defined globally on a manifold are known to have real and nonpositive eigenvalues, we would like to note that eigenfunctions with imaginary eigenvalues do exist locally. Indeed, let $M$ be the two-dimensional unit sphere with coordinates as in Example \ref{3-8}. Then the Laplace-Beltrami operator is represented by 
$$\Delta = \frac{\partial^2}{\partial\phi^2} + \cot \phi \frac{\partial}{\partial \phi} + \frac{1}{\sin^2\phi}\frac{\partial^2}{\partial\theta^2}. $$
Consider a function $f(\phi, \theta) = u(\phi) + \sqrt{-1} v(\phi)$, where $u,v$, together with their derivative $u', v'$, satisfies following linear system of ODE
$$\frac{d}{d\phi}\begin{bmatrix}
u \\ u' \\ v \\ v'
\end{bmatrix} = 
\begin{bmatrix}
0 & 1 & 0 & 0 \\
a & -\cot\phi & -b & 0\\
0 & 0 & 0 & 1 \\
b & -\cot\phi & a & 0
\end{bmatrix}\begin{bmatrix}
u \\ u' \\ v \\ v'
\end{bmatrix},$$
with some initial conditions specified $\phi=\pi/4$. Since all the coefficients are smooth near $\pi/4$, the solution exists smoothly in $(\pi/4-\epsilon, \pi/4+\epsilon)$ for some $\epsilon > 0$. Thus $f$ is a smooth complex-valued function defined in the open subset $\{(\phi, \theta): \pi/4-\epsilon < \phi < \pi/4 + \epsilon, 0 \leq \theta < 2\pi\}$. It is routine to check that $\Delta f = (a+\sqrt{-1}b)f$. 
\end{rema}

\begin{rema}
Theorem \ref{ParallelScalar} can be generalized to higher dimensional orientable and non-orientable space forms. See \cite{Q-Cov-Der}. 
\end{rema}

\begin{rema}\label{genspe}
We call an eigenvalue $\lambda$ special if $\lambda = \alpha(\alpha-1)$ for some $\alpha \in \Z_+$, generic if otherwise. Proposition \ref{4-4} in particular shows if $f$ is a global eigenfunction over the space form with negative section curvature, then the action of individual parallel tensors $h_{i_1}\otimes \cdots \otimes h_{i_{2r}}$ are nonvanishing. On the other hand, it is well known that the eigenvalues of global eigenfunctions over the two-dimensional unit sphere are all special. The difference of generic eigenvalues and special eigenvalues will also be reflected on the irreducible $V$-modules generated by eigenfunctions, as will be seen later in the paper. 
\end{rema}

\subsection{Structure of the module}

Having determined the action of $\Pi(T(E))$ on $f$, we now explicitly identify the module for the MOSVA constructed by Huang in \cite{H-MOSVA-Riemann} (cf. Section \ref{module-def}. The following theorem by Dong, Li, and Mason will be needed in the discussion:

\begin{lemma}[\cite{LL}, Proposition 4.5.6, \cite{DLM}] Let $W$ be a $V$-module and let $T$ be a subset of $W$. Then the submodule generated by $T$ is spanned by 
$$\{v_nw: v\in V, n\in \Z, w\in T\}.$$
\end{lemma}
Though the lemma was formulated for vertex algebras, the proof uses only weak associativity and thus applies to modules for MOSVAs (see \cite{LL}, Proposition 4.5.7). 


Now we state a general theorem regarding the spanning set of a module generated by a lowest weight element. 

\begin{thm}\label{4-10}
Let $W$ be a module for $V(l, \one)$ generated by an element $w$ of lowest weight $\mu$, i.e., for every $n\in \Z_+$, $W_{[\mu - n]} =0$. Then $W$ has the following spanning set
\begin{align}\left\{ \sum_{i} X_{(1)i}(-t_1) \cdots X_{(k)i}(-t_k) X_{(k+1)i}(0) \cdots X_{(r)i}(0)w: \begin{aligned}&r \geq 0, k = 1, ..., r, t_1, .., t_k > 0;\\
& \sum_{i} X_{(1)i} \otimes \cdots \otimes X_{(r)i} \in \Pi(E^{\otimes r})\end{aligned}\right\}.\label{spanset}
\end{align}
Here the $\otimes$ symbol is omitted for convenience. 
\end{thm}

\begin{proof}%
Consider now the action of $\sum_{i}X_{(1)i}(-m_1)\cdots X_{(r)i}(-m_r) \one$. We see that
\begin{align*}
    & \quad Y\left(\sum_{i}X_{(1)i}(-m_1)\cdots X_{(r)i}(-m_r) \one, x\right)w \\
    & = \sum_{n_1, ..., n_r \in \Z} \left(\prod_{j=1}^r \frac{(-n_j-1) \cdots (-n_j-m_j+1)}{(m_j-1)!}\right)\left(\nord \sum_{i} X_{(1)i}(n_1) \cdots X_{(r)i}(n_r)\nord w\right) x^{\sum_{j=1}^r(-n_j-m_j)}.
\end{align*}
Since $w$ is of lowest weight in $W$, 
if one of the $n_1, ..., n_r$ is positive, then after taking the normal ordering, the coefficient is zero. So it suffices to focus on nonpostive $n_1, ..., n_r$. Then $W$ is spanned by the following elements
\begin{align}
    \sum_{\substack{n_1 + \cdots + n_r = -p, \\ n_1, ..., n_r \leq 0} } \left(\prod_{j=1}^r \frac{(-n_j-1) \cdots (-n_j-m_j+1)}{(m_j-1)!}\right) \left(\nord\sum_{i} X_{(1)i}(n_1)\cdots X_{(r)i}(n_r)\nord w\right)\label{genericelement}
\end{align}
for $r\geq 0, p\geq 0, m_1, ..., m_r > 0$, and $\sum_{i} X_{(1)i} \otimes \cdots \otimes X_{(r)i} \in \Pi(E^{\otimes r})$. 

For each fixed $n_1, ..., n_r\leq 0$, 
$$\nord \sum_i X_{(1)i}(n_1)\cdots X_{(r)i}(n_r) \nord f=\sum_i  X_{(\sigma(1))i}(n_{\sigma(1)}) \cdots X_{(\sigma(k))i)}(n_{\sigma(k)}) X_{(\sigma(k+1))i}(0)\cdots X_{(\sigma(r))i}(0)f,$$
where $\sigma$ is the unique permutation such that \begin{align*}
    \sigma(1)< \cdots < \sigma(k)&,\quad \sigma(k+1)<\cdots <\sigma(r);\\
    n_{\sigma(1)}, ..., n_{\sigma(k)}<0&,\quad n_{\sigma(k+1)} = \cdots = n_{\sigma(r)} = 0.
\end{align*} 
Note that $\sum_i X_{(\sigma(1))i}\otimes \cdots \otimes X_{(\sigma(r))i}$ stays in $\Pi(E^{\otimes r})$, thus the summand of (\ref{genericelement}) for each fixed $n_1,..., n_r$ is a vector in (\ref{spanset}). Therefore, (\ref{genericelement}) is a sum of vectors in (\ref{spanset}). So $W$ is a subset of the linear span of (\ref{spanset}). 

Now we argue that every vector in (\ref{spanset}) is in $W$. First we notice that for each $j=1,...,r$
$$\frac{(-n_j-1) \cdots (-n_j-m_j+1)}{(m_j-1)!}\neq 0 \Rightarrow n_j = 0 \text{ or } n_j \leq -m_j.$$
This allows us to represent each vector in (\ref{spanset}) as a linear combination of elements of the form (\ref{genericelement}) by choosing $m_1, ..., m_r$ and $p$ appropriately. We show this by induction on $k$. In case $k = 1$, we pick $m_1 = t_1, m_2 = \cdots = m_r = t_1+1$ and $p=t_1$. Then every nonzero summand in (\ref{genericelement}) is given by $(n_1, ..., n_r)$ such that 
$$\begin{aligned}
& n_1+\cdots + n_r = -t_1, \\
& n_1 = 0 \text{ or } n_1 \leq -t_1, \\
& n_i = 0 \text{ or } n_i \leq -(t_1 + 1), i = 2, ..., r.
\end{aligned}$$ 
The only solution is $(n_1, ..., n_r) = (-t_1, 0, ..., 0)$. Thus $\sum_i X_{(1)i}(-t_1)X_{(2)i}(0)\cdots X_{(r)i}(0)f$ is an element of $W$. The base case is proved. 

Now assume every element in (\ref{spanset}) of smaller $k$ is represented by a linear combination of elements of the form (\ref{genericelement}). We pick $m_1 = t_1, ..., m_k = t_k, m_{k+1} =\cdots = m_r = t_1 + \cdots + t_k + 1$ and $p=t_1+\cdots + t_k$. Then every nonzero summand in (\ref{genericelement}) is given by $(n_1, ..., n_r)$ such that 
\begin{align*}
    & n_1 + \cdots + n_r = -(t_1 + \cdots + t_k), \\
    & n_i = 0 \text{ or }n_i \leq -t_i, i = 1, ..., k, \\
    & n_i = 0 \text{ or } n_i \leq -(t_1 +\cdots + t_k + 1), i = k+1, ..., r. 
\end{align*}
Necessarily, $n_{k+1}= \cdots = n_{r}=0$. For $n_1, ..., n_k$, aside from the choice $n_1=-t_1, ..., n_k = -t_k$, all other choices would involve some zeros. By induction hypothesis, the summands in (\ref{genericelement}) given by these extra choices are all contained in $W$. Subtracting (\ref{genericelement}) by these summands, we obtain $\sum_i X_{(1)i}(-t_1)\cdots X_{(k)i}(-t_k)X_{(k+1)i}(0) \cdots X_{(r)i}(0)$, which is then in $W$. 
\end{proof}

\begin{cor}\label{4-11}
Let $f: U\to \C$ be an eigenfunction for the Laplace-Beltrami operator over an open subset $U$ of $M$. Let $V_U(l, f)$ be the $V(l, \one)$-module constructed by Huang in \cite{H-MOSVA-Riemann} (cf. Section \ref{module-def}) with $\wt f = 0$. Then the following set
\begin{align}\left\{  h_{i_1}(-t_1) \cdots h_{i_k}(-t_k) h_{i_{k+1}}(0) \cdots h_{i_{r}}(0)f: \begin{aligned} & r\geq 0, k = 1, ..., r, t_1, .., t_k > 0; \\
&\#\{j\in [1,r]: i_j = +\} = \#\{j\in [1,r]: i_j = -\}; \\
&\forall p = k+1, ..., r, \\
& \#\{j\in [p,r]: i_j = + \}\neq \#\{j\in [p,r]: i_j = - \}\end{aligned}\right\}.\label{spanset-dim2}
\end{align}
forms a basis for $V_U(l, f)$. 
\end{cor}

\begin{proof}
With the same argument above, we see that elements in (\ref{spanset-dim2}) without the third and fourth requirements forms a spanning set of $V_U(l, f)$. The third and fourth requirements are introduced to exclude the relation brought by $\Pi(T(E))f = \C f$. It is clear that vectors in (\ref{spanset-dim2}) are linearly independent in $T(\widehat{E_p}_-)\otimes T(E_p) \otimes_{\Pi(T(E))} C^\infty(U)$. 
\end{proof}

\begin{rema}
Obviously, $V_U(l, f)$ is not grading-restricted, as one can insert arbitrarily many parallel sections between the $k$-th and $(k+1)$-th positions. The resulted set of new tensors are all linearly independent to each other in $T(\widehat{E}_-)_p\otimes T(E)_p \otimes_{\Pi(T(E))} C^\infty(U)$. Nevertheless, the irreducible quotients of $V_U(l, f)$ are grading-restricted, as shown later. 
\end{rema}

\subsection{Isomorphic relations} We know that two orientable space forms with different sectional curvatures cannot be distinguished by the MOSVAs. Now we show that they can be distinguished by the modules. 

\begin{thm}\label{isom-reln}
Let $M_1$, $M_2$ be orientable space forms of the same dimension. Let $K_1$ and $K_2$ be their sectional curvatures. Let $U_1$ and $U_2$ be open subsets of $M_1$ and $M_2$, $f_1: U_1 \to \C, f_2: U_2 \to \C$ be eigenfunctions for the Beltrami-Laplace operator of eigenvalues $\lambda_1$ and $\lambda_2$. Then 
\begin{enumerate}
    \item $\C f_1$ and $\C f_2$ are isomorphic as $\Pi(T(E))$-modules if and only if $\lambda_1 = \lambda_2, K_1 = K_2$. 
    \item $V_{U_1}(l, f_1)$ and $V_{U_2}(l, f_2)$ are isomorphic as $V(l, \one)$-modules if and only if $\lambda_1 = \lambda_2, K_1 = K_2$. 
\end{enumerate}
\end{thm}

\begin{proof}
For Part (1), without loss of generality, let $T f_1 = f_2$. We argue that $T$ is an $\Pi(T(E))$-isomorphism if and only if $\lambda_1 = \lambda_2, K_1 = K_2$. 

The only if part can be seen by the action of $h_+\otimes h_-$ and $h_+\otimes h_+\otimes h_-\otimes h_-$. Recall that for any eigenfunction $f$ of eigenvalue $\lambda$ over an open subset of an orientable space form with sectional curvature $K$ 
\begin{align*}
    &(h_+\otimes h_-)f = (\nabla^2f)(h_+\otimes h_-) = -\lambda f\\
    &(h_+\otimes h_+\otimes h_-\otimes h_-)f = (\nabla^4 f)(h_+\otimes h_+\otimes h_-\otimes h_-) = -\lambda(-\lambda+2K)f
\end{align*}
Thus
\begin{align*}
    &T((h_+\otimes h_-) f_1)  = (h_+\otimes h_-) T(f_1) \Rightarrow \lambda_1 f_2 = \lambda_2 f_2 \Rightarrow \lambda_1 = \lambda_2\\
    &T((h_+\otimes h_+\otimes h_-\otimes h_-) f_1)   =  (h_+\otimes h_+\otimes h_-\otimes h_-)T(f_1) \\
    &\Rightarrow \lambda_1^2- \lambda_1 K_1 (\dim M-1) = \lambda_2^2 - \lambda_2 K_2(\dim M - 1)
     \Rightarrow  K_1 = K_2.
\end{align*}
The if part follows from an  argument similar to the process of Proposition \ref{ConstMult} and Proposition \ref{4-4}, showing that 
$$T((h_{i_1}\otimes \cdots \otimes h_{i_r})f_1)= (h_{i_1}\otimes \cdots \otimes h_{i_r}) T(f_1)$$
for every $i_1, ..., i_r\in \{+, -\}$ with $\#\{j: i_j = +\} = \#\{j: i_j = -\}$. 
We shall not repeat here. 

For Part (2), if $\lambda_1 = \lambda_2$ and $K_1 = K_2$, then $\C f_1$ is isomorphic to $\C f_2$ as $\Pi(T(E))$-modules. We then see that the induced module $T(E)_p\otimes_{\Pi(T(E))} f_1$ is isomorphic to the induced module $T(E_p)\otimes_{\Pi(T(E))} f_2$ as $N_p(E)$-modules. 
From the construction of the modules $V_U(l, f_1)$ and $V_U(l, f_2)$ in Section 2.4 and \cite{H-MOSVA-Riemann}, they are isomorphic as $V(l, \one)$-modules. 

Conversely, if $V_{U_1}(l, f_1)$ is isomorphic to $V_{U_2}(l, f_2)$, we can similarly consider the actions of $h_+(-1)h_+(-1)\one$ and $h_+(-1)h_+(-1)h_-(-1)h_-(-1)\one$. The zero-modes of these action are precisely $h_+(0)\otimes h_-(0)$ and $h_+(0)\otimes h_+(0)\otimes h_-(0)\otimes h_-(0)$ discussed in Part (1). The same discussion gives $\lambda_1 = \lambda_2$ and $K_1 = K_2$. 
\end{proof}

\section{Irreducible modules generated by eigenfunctions} 

In this section, we study quotients of $V_U(l,f)$, where $U$ is an open subset of an orientable space form, $f: U\to \C$ is an eigenfunction for the Beltrami-Laplace operator of eigenvalue $\lambda$. 

Since $V_U(l, f)$ is generated by the unique (up to a scalar multiple) lowest weight element $f$, a nonzero submodule is proper if and only if it does not contain $f$. Since the homogeneous subspace of weight $\lambda$ is of one dimension, the sum of two proper submodules does not contain $f$ and thus stays as a proper submodule. Thus we conclude the following proposition: 

\begin{prop}
There exists a unique maximal proper submodule of $V_U(l, f)$. Thus $V_U(l, f)$ has a unique irreducible quotient, called the irreducible module generated by $f$. 
\end{prop}

\subsection{Lowest weight projection formula} The main tool to locate the irreducible quotient is an explicit formula of the projection $Y(v,x)w$ to the lowest weight space of $V_U(l, f)$ for every homogeneous $v\in V(l, 1)$ and $w\in V_U(l, f)$. In other words, the formula gives the coefficient of $x^{-n-1}$ in $Y(v, x)$ where $v_nw$ is of the same weight of $f$. This coefficient will be called the lowest weight projection of $Y(v, x)w$. 

\begin{nota}
To avoid using quadruple subscripts, in this section, we use $\state{+}$ to denote the vector field $h_+$, $\state{-}$ to denote the vector field $h_-$. Let $i_1,..., i_s\in \{+, -\}$. We denote each $h_{i_j}$ by $\state{i_j}$. For $m_1, ..., m_s \in \Z$, the operator $h_{i_1}(m_1)\cdots h_{i_s}(m_s)$ will be denoted by $\state{i_j(m_1)\cdots i_s(m_s)}$, or $\state{i_1(m_1)\cdots i_k(m_k)}\cdot\state{i_{k+1}(m_{k+1}) \cdots i_{s}(m_s)}$, or $\state{i_1(m_1)\cdots i_k(m_k)}\state{i_{k+1}(m_{k+1}) \cdots i_{s}(m_s)}$, for every $k=1, ..., s$. 
In case $m_1= \cdots =m_s = 0$, we may also use $\state{i_1\cdots i_r}$ to denote the operator $h_{i_1}(0)\cdots h_{i_r}(0)$, omitting the zero indicator. But for some situations we will still keep $\state{i_1(0)\cdots i_r(0)}$ without the abbreviation. The inner product $(h_{i_j}, h_{k_s})$ will simply be denoted as $\langle i_j, k_s\rangle$. It follows from a direct computation that 
\begin{align*}
\langle +, + \rangle & = (h_+, h_+) = 0, \langle -, - \rangle = (h_-, h_-) = 0\\
\langle +, - \rangle & = (h_+, h_-) = 2, \langle -, + \rangle = (h_-, h_+) = 2. 
\end{align*}
\end{nota}

\begin{prop}\label{proj0-prop}
Let $m_1, ...,, m_s, t_1, ..., t_k \in \Z_+$. Then for 
\begin{align*}
    v &=\state{i_1(-m_1)\cdots i_s(-m_s)}\one,\\
    w &= \state{j_1(-t_1)\cdots j_k(-t_k)j_{k+1}(0)\cdots j_r(0)}f.
\end{align*}
Then for
$$n = m_1 + \cdots + m_s + t_1 + \cdots + t_k - 1,$$
$v_n w$ is the lowest weight component in $Y(v, x)w$. If $s<k$, then $v_n w = 0$; if $s\geq k$, then 
$v_n w$ is equal to
\begin{align}
\sum_{1\leq c_k < \cdots < c_1 \leq s} \sum_{\sigma\in S_k}
\prod_{\substack{1\leq p \leq s\\
p\neq c_1, ..., c_k}}(-1)^{m_p - 1}&\cdot \prod_{p=1}^k \frac{(-t_{\sigma(p)}-1) \cdots (-t_{\sigma(p)}-m_{c_p}+1)}{(m_{c_p}-1)!} \cdot \prod_{p=1}^k t_{\sigma(p)}l \langle i_{c_p}, j_{\sigma(p)} \rangle\nonumber \\
& \cdot \state{i_1(0) \cdots \widehat{i_{c_k}(0)} \cdots \widehat{i_{c_1}(0)} \cdots i_s(0) j_{k+1}(0)\cdots j_r(0)}f. \label{proj0-formula}
\end{align}
\end{prop}

\begin{proof}
Since $\wt v = m_1 + \cdots m_s, \wt w = t_1 + \cdots + t_k$, we know that $\wt v_n w = \wt f$ if and only if 
$$m_1 + \cdots + m_s + t_1 + \cdots + t_k - n - 1 = 0. $$
Thus 
$$n = m_1 + \cdots + m_s + t_1 + \cdots + t_k -1. $$
By definition, 
$$Y(v, x)w = \sum_{n_1, \cdots n_s \in \Z} \prod_{p=1}^s \frac{(-n_p - 1)\cdots (-n_p-m_p + 1)}{(m_p-1)!} \nord \state{i_1(n_1) \cdots i_s(n_s)}\nord w x^{-n_1-m_1 - \cdots - n_s-m_s}.$$
Thus for $n$ specified above, the coefficient $x^{-n-1}$ in the series can be simplified as 
\begin{align}
    \sum_{\substack{n_1+\cdots+n_s = t_1+\cdots + t_k\\
n_1, ..., n_s\in \Z_+\cup \{0\}}} \prod_{p=1}^s \frac{(-n_p - 1)\cdots (-n_p-m_p + 1)}{(m_p-1)!} \nord \state{i_1(n_1) \cdots i_s(n_s)}\nord w. \label{proj0-origin}
\end{align}
Here $n_1, ..., n_s\in \Z_+ \cup \{0\}$ because any occurrence of negative number will result in a zero summand. The normal ordering originally pushes all the zero-modes to the right. However, since the zero-modes commute with positive-modes, we can instead have all the positive-modes first act on $w$, then compose with the zero-modes. 

Now for any $n>0$ and any $i\in \{+,-\}$, we study the action of $i(n)$ on $w$. Using the fact that 
$$\state{i(n)j(-t)} = \state{j(-t)i(n)} + t\delta_{nt}l\langle i,j\rangle,$$
we see that
\begin{align*}
    \state{i(n) w} &= \state{i(n) j_1(-t_1)\cdots j_k(-t_k)j_{k+1}(0)\cdots j_r(0)}f\\
    &= \sum_{q=1}^k t_q \delta_{n,t_q} \langle i,j_q\rangle \state{j_1(-t_1)\cdots \widehat{j_q(-t_q)}\cdots j_k(-t_k)j_{k+1}(0)\cdots j_r(0)}f.
\end{align*}
In particular, the action is nonzero only when $n$ coincides with one of the $t_q$'s. For all other choices of $n$, we simply get zero. 

Based on the above observations, we see that (\ref{proj0-origin}) is nonzero only when $s\geq k$. To evaluate (\ref{proj0-origin}), we first choose $k$ elements $c_1> ...> c_k$ in $\{1, ..., s\}$ and specify $i_{c_1}(n_{c_1}), ..., i_{c_k}(n_{c_k})$ as positive-modes. Then (\ref{proj0-origin}) becomes
\begin{align}
    \sum_{1\leq c_k <\cdots < c_1 \leq s}\sum_{\substack{n_{c_1}+\cdots + n_{c_k} = t_1+\cdots + t_k\\
    n_1, ..., n_{c_k} \in \Z_+}} & \prod_{p=1}^k \frac{(-n_{c_p}-1)\cdots (-n_{c_p}-m_{c_p}-1)}{(m_{c_p}-1)!} \prod_{\substack{1\leq p \leq s \\ p \neq c_1, ..., c_k}}(-1)^{m_p-1} \nonumber \\
    & \cdot \state{i_{1}(0) \cdots \widehat{i_{c_k}(0)} \cdots \widehat{i_{c_1}(0)} \cdots i_s(0)}
    \cdot \state{i_{c_k}(n_{c_k})\cdots i_{c_1}(n_{c_1})}w. \label{proj0-1}
\end{align}
In order that $\state{i_{c_k}(n_{c_k})\cdots i_{c_1}(n_{c_1})}w \neq 0$, $n_{c_1}, ..., n_{c_k}$ has to be chosen among $\{t_1, ..., t_k\}$. One computes that $$\state{i_{c_k}(n_{c_k})\cdots i_{c_1}(n_{c_1})}w = \sum_{\sigma\in S_k}t_{\sigma(1)}l\delta_{n_{c_1},t_{\sigma(1)}}\langle i_{c_1}, j_{\sigma(1)}\rangle \cdots t_{\sigma(k)}l\delta_{n_{c_k}, t_{\sigma(k)}}\langle i_{c_k}, j_{\sigma(k)}\rangle \state{j_{k+1}(0) \cdots j_{r}(0)}f.$$
Thus (\ref{proj0-1}) becomes
\begin{align*}
    \sum_{1\leq c_k < \cdots < c_1 \leq s}\sum_{\sigma\in S_k} & \prod_{p=1}^k \frac{(-t_{\sigma(p)}-1)\cdots (-t_{\sigma(p)}-m_{c_p}+1)}{(m_{c_p}-1)!}\prod_{\substack{1\leq p \leq s\\ p\neq c_1, ..., c_k}}(-1)^{m_p-1}\\ 
    & \cdot \prod_{p=1}^k t_{\sigma(p)} l \langle i_{c_p}, j_{\sigma(p)}\rangle  \state{i_1(0)\cdots \widehat{i_{c_k}(0)} \cdots \widehat{i_{c_1}(0)} \cdots i_s(0) j_{k+1}(0)\cdots j_r(0)}f.
\end{align*}
The conclusion then follows. 
\end{proof}

\begin{rema}
Since for every $t>0$, 
$$\frac{(-t-1)\cdots (-t-m+1)}{(m-1)!} = (-1)^{m-1}\binom{t+m-1}{t},$$
formula (\ref{proj0-formula}) can also be written as 
\begin{align}
\sum_{1\leq c_k < \cdots < c_1 \leq s} \sum_{\sigma\in S_k}
\prod_{\substack{1\leq p \leq s}}(-1)^{m_p - 1}&\cdot \prod_{p=1}^k \binom{t_{\sigma(p)} + m_{c_p} - 1}{t_{\sigma(p)}} \cdot \prod_{p=1}^k t_{\sigma(p)}l \langle i_{c_p}, j_{\sigma(p)} \rangle\nonumber \\
& \cdot \state{i_1(0) \cdots \widehat{i_{c_k}(0)} \cdots \widehat{i_{c_1}(0)} \cdots i_s(0) j_{k+1}(0)\cdots j_r(0)}f. \label{proj0-formula-2}
\end{align}
\end{rema}


\subsection{First quotient}

We first locate a submodule large enough that the quotient by which is grading-restricted. 

\begin{prop}\label{5-5}
For every $j_1, ..., j_r$, there exists a constant
$C=C_{j_1,...,j_r}(\lambda)$ depending only on $j_{1},..., j_r$ and the eigenvalue $\lambda$, such that for every $i_1, ..., i_s\in \{+, -\}$ satisfying $\sum_{\alpha=1}^s i_\alpha + \sum_{\beta=1}^r j_\beta = 0$, 
\begin{align}
    \state{i_1\cdots i_s j_1\cdots j_r}f - C\state{i_1\cdots i_s}\state{j_1'\cdots j_N'}f = 0,\label{reduction}
\end{align}
here $N = \left|\left(\sum_{\beta=1}^r j_\beta \right)\right|, j_1' = \cdots = j_N'=\text{sgn}\left(\sum_{\beta=1}^r j_\beta \right)$. 
\end{prop}
%
\begin{proof} 
Without loss of generality, assume that $\sum_{\beta=1}^r j_\beta < 0$. Let $P$ be the number of $+$'s in $j_1,...,j_r$. We perform induction on $P$. If $P=0$, then $N = r$, and $j_{k+1}= \cdots = j_r = -$. We simply take $C=1$ and $j_1' = \cdots = j_N' = -$. (\ref{reduction}) holds.  

Assume the conclusion holds when the number of +'s is strictly less than $P$. In case $j_r = -$, we further assume that $j_r=\cdots = j_{\gamma+1} = -, j_\gamma = +$ for some $\gamma\in [1, r]$. Using the same computation as in Proposition 5.5, we see that
\begin{align*}
    \state{i_1\cdots i_s j_{1}\cdots j_r}f & = \state{i_1\cdots  i_s j_{1}\cdots j_{\gamma-1} + - \cdots -}f\\
    & = \state{i_1\cdots i_s j_{k+1}\cdots j_{t-1} - \cdots -}\state{+-}f\\
    & \quad + K(r-\gamma-1)(r-\gamma) \state{i_1\cdots  i_s j_{1}\cdots j_{\gamma-1}\widehat{+-} - \cdots -}\\
    & = (-\lambda + K(r-\gamma-1)(r-\gamma))\state{i_1\cdots i_s j_{1}\cdots j_{\gamma-1}\widehat{+-} - \cdots -}.
\end{align*}
Here we use the hat notation to indicated the removed terms. Since $\state{i_1\cdots i_s j_{1}\cdots j_{\gamma-1}\widehat{+-} - \cdots -}$ contains one less $+$ and one less $-$ compared to the original $\state{i_1\cdots i_s j_{1}\cdots j_{\gamma-1}+- - \cdots -}$, the induction hypothesis gives a constant $C^{(0)} = C^{(0)}_{j_{1}...\widehat{j_\gamma}\widehat{j_{\gamma+1}}...j_r}(\lambda)$ depending only on $j_{1}, ..., j_{\gamma-1}, j_{\gamma+2}, ..., j_r$, such that 
$$\state{i_1\cdots i_s j_{1}\cdots j_{\gamma-1}\widehat{+-} - \cdots -} = C^{(0)}\state{i_1\cdots i_s}\state{j'_1\cdots j'_N},$$
with $j_1'= \cdots = j_N' = -$. 
Thus (\ref{reduction}) holds with $C= (-\lambda + K(r-t-1)(r-t))C_{j_1\cdots j_{t-1}j_{t+2}...j_r}^{(0)}(\lambda)$. 

In case $j_r=+$, there exists some $\gamma$ such that $\sum_{\beta=\gamma}^r j_\beta = 0$. Then 
$$\state{i_1\cdots i_s j_1 \cdots j_r} = \state{i_1 \cdots i_s j_1 \cdots j_{\gamma-1}}\state{j_\gamma\cdots j_r}f = C^{(1)}\state{i_1 \cdots i_s j_1 \cdots j_{\gamma-1}}f$$
for some constant $C^{(1)}= C^{(1)}_{j_{\gamma \cdots j_r}}(\lambda)$ depending only on $j_\gamma, ..., j_r$ and $\lambda$. By induction hypothesis, 
$\state{i_1 \cdots i_s j_1 \cdots j_{\gamma-1}}f = C^{(2)}\state{i_1 \cdots i_s j_1'\cdots j_N'}f$ for some $C^{(2)}=C^{(2)}_{j_{\gamma \cdots j_r}}(\lambda)$ depending only on $j_1, ..., j_{\gamma-1}$ and $\lambda$. Thus (\ref{reduction}) holds with $C(\lambda) = C^{(1)}= C^{(1)}_{j_{\gamma \cdots j_r}}(\lambda)C^{(2)}_{j_{\gamma \cdots j_r}}(\lambda)$. 
%
\end{proof}

\begin{rema}
In natural language, we will describe conclusion of Proposition \ref{5-5} as expressing $\state{i_1\cdots i_s j_1\cdots j_r}f$ uniformly as $C\state{i_1\cdots i_sj_1'\cdots j_N'}$ with respect to $i_1, ..., i_s$. This turns out to be convenient when we study the general criterion for an irreducible $V(l, \one)$-module to be grading-restricted later in this paper. 
\end{rema}

\begin{thm}\label{5-10}
Let $j_1, ..., j_r\in \{+, -\}$ satisfying $\sum_{p=1}^r j_p = 0$, $t_1, ..., t_k\in \Z_+$. Set $N =\left|\sum_{p=k+1}^r j_p\right|$. Then there exists a constant $C=C_{j_{k+1},..., j_r}(\lambda)$ depending only on $j_{k+1}, \cdots j_r$ and the eigenvalue $\lambda$, such that 
\begin{align}
    \state{j_1(-t_1) \cdots j_k(-t_k) j_{k+1}(0) \cdots j_r(0)}f - C(\lambda) \state{j_1(-t_1)\cdots j_k(-t_k) j'_{1}(0) \cdots j'_{N}(0)}f\label{submodgen}
\end{align}
generates a proper submodule. Here $j_1' = \cdots = j_{N}' = \text{sgn}\left(\sum_{p=k+1}^r j_p\right)$.
\end{thm}

\begin{proof}
Without loss of generality, assume $\sum_{p=k+1}^r j_p < 0$. This means among $j_{k+1}, ..., j_r$, there are more $-$'s than $+$'s, with $N$ being the difference. Now consider the lowest weight projection of $Y(v,x)w$, with $v=\state{i_1(-m_1)\cdots i_s(-m_s)}\one, w = \state{j_1(-t_1)\cdots j_k(-t_k) j_{k+1}(0)\cdots j_r(0)}f$. From Proposition \ref{proj0-prop}, the projection is a sum of elements of the form 
\begin{align*}
\prod_{\substack{1\leq p \leq s\\
p\neq c_1, ..., c_k}}(-1)^{m_p - 1}&\cdot \prod_{p=1}^k \frac{(-t_{\sigma(p)}-1) \cdots (-t_{\sigma(p)}-m_{c_p}+1)}{(m_{c_p}-1)!} \cdot \prod_{p=1}^k t_{\sigma(p)}l \langle i_{c_p}, j_{\sigma(p)} \rangle\\
& \cdot \state{i_1(0) \cdots \widehat{i_{c_k}(0)} \cdots \widehat{i_{c_1}(0)} \cdots i_s(0) j_{k+1}(0)\cdots j_r(0)}f
\end{align*}
From Proposition \ref{5-5}, we see that for every fixed $c_1, ..., c_k$ and $\sigma\in S_k$, 
\begin{align*}
\prod_{\substack{1\leq p \leq s\\
p\neq c_1, ..., c_k}}(-1)^{m_p - 1}&\cdot \prod_{p=1}^k \frac{(-t_{\sigma(p)}-1) \cdots (-t_{\sigma(p)}-m_{c_p}+1)}{(m_{c_p}-1)!} \cdot \prod_{p=1}^k t_{\sigma(p)}l \langle i_{c_p}, j_{\sigma(p)} \rangle\\
& \cdot \big{(} \state{i_1(0) \cdots \widehat{i_{c_k}(0)} \cdots \widehat{i_{c_1}(0)} \cdots i_s(0) j_{k+1}(0)\cdots j_r(0)}f \\
& \quad - \state{i_1(0) \cdots \widehat{i_{c_k}(0)} \cdots \widehat{i_{c_1}(0)} \cdots i_s(0)}\state{j_1'(0) \cdots j_N'(0)} f\big{)} = 0.
\end{align*}
with respect to any choice of $i_1, ..., i_s$ and $m_1, ..., m_s$. Thus for every homogeneous $v\in V$ and $w$ given by (\ref{submodgen}), the lowest weight projection of $Y(v, x)w$ does not contain $f$. Thus (\ref{submodgen}) generates a proper submodule. 
\end{proof}

\begin{rema}
From the proof, it is clear that the constant $C$ is a polynomial in $\lambda$ whose roots depend on the positioning of $+$ and $-$ in $j_{k+1},..., j_r$ and are contained in the set (\ref{setofroot}). So the constant $C$ is nonzero for generic eigenvalues $\lambda$. On the other hand, if the eigenvalue $\lambda$ happens to make $C=0$, then the element $\state{j_1(-t_1) \cdots j_k(-t_k) j_{k+1}(0) \cdots j_r(0)}f$ itself generates a submodule that can be quotiented out. 
\end{rema}

\begin{thm}\label{5-12}
Let $V_U^{(1)}(l, f)$ be the quotient of $V_U(l, f)$ by the submodule generated by elements of the form (\ref{submodgen}). 
\begin{enumerate}
    \item The set 
$$\left\{h_{i_1}(-t_1) \cdots h_{i_k}(-t_k)h_{i_{k+1}}(0) \cdots h_{i_r}(0)f:
\begin{aligned}
&r \geq 0, t_1, ..., t_k > 0\\
&\#\{j:i_j=+\} = \#\{j: i_j=-\}\\
&i_{k+1} = \cdots = i_r \in \{+, -\}
\end{aligned}\right\}$$
forms a basis for $V_U^{(1)}(l, f)$.
\item $V_U^{(1)}(l, f)$ is grading-restricted. The graded dimension of $V_U^{(1)}(l, f)$ is 
$$1 + \sum_{n=1}^\infty 2\cdot 3^{n-1}q^n.$$
\end{enumerate} 
\end{thm}

\begin{proof}
Let $C$ be the constant in  (\ref{submodgen}). In case $C\neq 0$, the relation (\ref{submodgen}) allows us to identify $h_{i_1}(-t_1)\cdots h_{i_k}(-t_k)h_{i_{k+1}}(0)f \cdots h_{i_r}(0)$ in (\ref{spanset-dim2}) with either $h_{i_1}(-t_1) \cdots h_{i_k}(-t_k)h_{+}(0) \cdots h_{+}(0) f$ or $h_{i_1}(-t_1) \cdots h_{i_k}(-t_k)h_{-}(0) \cdots h_{-}(0) f$. The former happens when $\sum_{p=k+1}^r i_p > 0$, with the number of $+$'s being $N = \left|\sum_{p=k+1}^r i_p\right|$. The latter happens when $\sum_{p=k+1}^r i_p < 0$, with the number of $-$'s being $N$ as well. In case $C=0$, $h_{i_1}(-t_1)\cdots h_{i_k}(-t_k)h_{i_{k+1}}(0)f \cdots h_{i_r}(0)$ is simply identified with zero and will also be quotiented out. In both cases, the first conclusion follows. 

To see $V_U^{(1)}(l,f)$ is grading-restricted, we fix $n>0$ and consider the homogeneous subspace of weight $\lambda+n$. Then $t_1 + \cdots + t_k = n$. Thus $k$ is at most $n$. Also, $i_1 + \cdots + i_k$ is bounded by $[-k, k]$. Since $i_{k+1} + \cdots + i_r = - (i_1 + \cdots + i_k)$, $r$ and $i_{k+1} = \cdots = i_r$, $r$ is bounded above by $2k$. Thus for fixed $n$, there are only finitely many choices for $k, t_1, ..., t_k, r, i_1, ..., i_r$.

To determine exactly how many choices, we first fix $k>0$. Then the choices for $t_1,..., t_k$ is $\binom {n-1}{k-1}$. We can choose $i_1, ..., i_k$ freely. But once $i_1, ..., i_k$ are fixed,  the choice for $r$ and $i_{k+1}, ..., i_r$ is unique. Thus the total number of choices is 
$$\sum_{k=1}^n \binom {n-1}{k-1} 2^k = 2\cdot \sum_{k=0}^{n-1}\binom{n-1}{k} 2^k = 2\cdot 3^{n-1}. $$
The second conclusion then follows. 
\end{proof} 

\begin{rema}
One can also assign the weight of $f$ to be other numbers. A natural choice for $\wt f$ is the eigenvalue $\lambda$ of $f$. In this case, the graded dimension will then be shifted by a factor $q^\lambda$. Since we are studying the modules generated by one eigenfunction at this moment, we will stick to the choice that $\wt f = 0$ for the module $V_U(l, f)$ for convenience. 
\end{rema}

\begin{rema}
A more conceptual way to obtain $V_U^{(1)}(l,f)$ is to start from the $T(E)$-module $(T(E)\otimes_{\Pi(T(E))} \C f) /N$, where $N$ is the $T(E)$-submodule generated by 
\begin{align}
    (X\otimes Y \otimes Z_1 \otimes \cdots \otimes Z_n) f - (Y\otimes X \otimes Z_1 \otimes \cdots \otimes Z_n) f 
    + \sum_{i=1}^n (Z_1 \otimes \cdots \otimes R(X,Y)Z_i \otimes \cdots \otimes Z_n)f
\end{align}
and form the induced module $T(\widehat{E_p}_-)\otimes ((T(E)\otimes_{\Pi(T(E))} \C f) /N)$. The $V(l, \one)$-submodule generated by $1\otimes (1\otimes f)$ is indeed isomorphic to $V_U^{(1)}(l, f)$. Conceptually, the submodule generated by $f$ amounts to requires nonparallel tensors in $T(E_p)$ to satisfy similar relations as the covariant derivatives on $C^\infty(U)$. But we are not going to actually act these nonparallel tensors on $C^\infty(U)$. 
\end{rema}

\subsection{Second quotient} 

\begin{thm}\label{5-13}
Let $j_1, ..., j_r\in \{+,-\}$ satisfy $\sum_{p=1}^r j_p = 0$. Let $t_1,..., t_k \in \Z_+$. Then for every $\tau \in S_k$, 
\begin{align}
\state{j_1(-t_1)\cdots j_k(-t_k) j_{k+1}(0) \cdots j_r(0)}f - \state{j_{\tau(1)}(-t_{\tau(1)}) \cdots j_{\tau(k)}(-t_{\tau(k)})j_{k+1}(0)\cdots j_r(0)}f
\label{submodgen-2}
\end{align}
generates a proper submodule. 
\end{thm}

\begin{proof}
We apply Proposition \ref{proj0-prop} to $w=\state{j_{\tau(1)}(-t_{\tau(1)}) \cdots j_{\tau(k)}(-t_{\tau(k)})j_{k+1}(0) \cdots j_r(0)} f$. Then (\ref{proj0-formula}) becomes
\begin{align*}
\sum_{1\leq c_k < \cdots < c_1 \leq s} \sum_{\sigma\in S_k}
\prod_{\substack{1\leq p \leq s\\
p\neq c_1, ..., c_k}}(-1)^{m_p - 1}&\cdot \prod_{p=1}^k \frac{(-t_{\sigma(\tau(p))}-1) \cdots (-t_{\sigma(\tau(p))}-m_{c_p}+1)}{(m_{c_p}-1)!} \cdot \prod_{p=1}^k t_{\sigma(\tau(p))}l \langle i_{c_p}, j_{\sigma(\tau(p))} \rangle\nonumber \\
& \cdot \state{i_1(0) \cdots \widehat{i_{c_k}(0)} \cdots \widehat{i_{c_1}(0)} \cdots i_s(0) j_{k+1}(0)\cdots j_r(0)}f.
\end{align*}
Since the summation is done over all
$\sigma\in S_k$, shifting every $\sigma$ by $\tau$ does not change the sum. Thus the weight-$\lambda$ projection of $w = \state{j_{\tau(1)}(-t_{\tau(1)}) \cdots j_{\tau(k)}(-t_{\tau(k)})j_{k+1}(0) \cdots j_r(0)}$ with every $v\in V$ coincides with that of $\state{j_{1}(-t_{1}) \cdots j_{k}(-t_{k})j_{k+1}(0) \cdots j_r(0)}$. The conclusion then follows. 
\end{proof}

\begin{thm}\label{5-16}
Let $V_U^{(2)}(l, f)$ be the quotient of $V_U^{(1)}(l, f)$ by the submodule generated by elements of the form (\ref{submodgen-2}). Then the union of the following three sets
$$\left\{h_{+}(-t_1) \cdots h_{+}(-t_{r/2}) h_{-}(-t_{r/2+1}) \cdots h_{-}(-t_r)f:
 \begin{aligned} & r\geq 0 \text{ even, }t_1\geq \cdots \geq t_{r/2}\geq 1, \\ & t_{r/2+1} \geq \cdots \geq t_{r}\geq 1
\end{aligned}\right\},$$

$$\left\{
h_{+}(-t_1)\cdots h_{+}(-t_{r/2}) h_{-}(-t_{r/2+1}) \cdots h_{-}(-t_k)h_{-}(0)\cdots h_{-}(0)f: 
\begin{aligned}
& r\geq 0 \text{ even, } r/2 \leq k \leq r-1 \\ 
& t_1\geq \cdots \geq t_{r/2}\geq 1,\\ & t_{r/2+1} \geq \cdots \geq t_{k}\geq 1
\end{aligned}\right\},$$

$$\left\{
h_{-}(-t_1)\cdots h_{-}(-t_{r/2}) h_{+}(-t_{r/2+1}) \cdots h_{+}(-t_k)h_{+}(0)\cdots h_{+}(0)f: 
\begin{aligned}
& r\geq 0 \text{ even, } r/2 \leq k \leq r-1 \\ 
& t_1\geq \cdots \geq t_{r/2}\geq 1,\\ & t_{r/2+1} \geq \cdots \geq t_{k}\geq 1
\end{aligned}\right\},$$
forms a basis of $V_U^{(2)}(l, f)$. 
\end{thm}

\begin{proof}
The relation (\ref{submodgen-2}) allows us to permute all the negative modes arbitrarily in the quotient. Note that $\#\{j: i_j=+\}=\#\{j: i_j = -\}$. In case there is no zero modes, all the $h_+$ can be placed to the front and all the $h_-$ to the rear, forming two groups. In each group, higher weights can then be arranged to the front, lowers in the rear. In case there exists zero modes, by Theorem \ref{5-12}, either all zero modes are $h_+(0)$, or are $h_-(0)$. We arrange the terms similarly according to the choices of zero modes. 
\end{proof}

\begin{rema}
It is not difficult to see that the graded-dimension of $V_U^{(2)}(l, f)$ is 
$$1 + \sum_{n=1}^\infty\left[\sum_{\substack{r \text{ even,}\\1\leq \frac r 2 \leq \frac n 2}}\sum_{m=r/2}^{n-r/2} p_{r/2}(m)p_{r/2}(n-m) + 2\sum_{\substack{r \text{ even,}\\
1\leq \frac r 2 \leq n}}\left(p_{r/2}(n) + \sum_{m=1}^{n-1}\sum_{k=1}^{r/2-1} p_{r/2}(m)p_k(n-m)\right)\right]q^n,$$
where $p_{k}(n)$ is the number of unordered partitions of $n$ into exactly $k$ parts, i.e., the number of $\lambda_1, ..., \lambda_k$ such that $\lambda_1 \geq \cdots \geq \lambda_k\geq 1, \lambda_1 + \cdots +\lambda_k = n$. Whether or not this series can be further simplified in terms of some special functions remains a problem.
\end{rema}

We will proceed to show that $V_U^{(2)}(l,f)$ is irreducible if the eigenvalue of $f$ is generic (recall Remark \ref{genspe}). The proof will need the following fact in the polynomial algebra. 

\begin{lemma}\label{lin-indep}
\begin{enumerate}
    \item The following set of polynomials \begin{align}\left\{\sum_{\sigma\in S_k} \binom{t_{\sigma(1)}+x_1 - 1}{t_{\sigma(1)}} \cdots  \binom{t_{\sigma(n)}+x_n - 1}{t_{\sigma(n)}}: t_1 \geq \cdots t_n \geq 1\right\}\label{Formula-59}
    \end{align}
is a linearly independent subset in $\C[x_1, ..., x_n]$. 
    \item The following set of polynomials
    $$\left\{\begin{aligned}
    & \sum_{\sigma\in S_n} \binom{t_{\sigma(1)}+x_1 - 1}{t_{\sigma(1)}} \cdots  \binom{t_{\sigma(n)}+x_n - 1}{t_{\sigma(n)}}\\ 
    & \cdot \sum_{\tau\in S_m} \binom{s_{\tau(1)} + y_1 - 1}{s_{\tau(1)}}\cdots \binom{s_{\tau(m)} + y_1 - 1}{s_{\tau(m)}}
    \end{aligned}: t_1 \geq \cdots t_n \geq 1, s_1 \geq \cdots \geq s_m \geq 1\right\}$$
\end{enumerate}
is a linearly independent subset in $\C[x_1, ..., x_n, y_1, ..., y_n]$. 
\end{lemma}

\begin{proof}
For (1), we consider the linear map from $\C[x_1, ..., x_n]$ to itself defined by
$$x_1^{t_1} \cdots x_n^{t_n} \mapsto \binom{t_1 + x_1 - 1}{t_1} \cdots \binom{t_n + x_n - 1}{t_n}. $$
Since the highest degree term of the image is $x_1^{t_1}\cdots x_n^{t_n}$, it is clear that the matrix of the linear map with respect to the basis $x_1^{t_1} \cdots x_n^{t_n}$ is upper-triangular with non-vanishing diagonal entries. Thus the linear map is invertible. So the set (\ref{Formula-59}) is linearly independent if and only if 
$$\left\{\sum_{\sigma\in S_n} x_1^{t_{\sigma(1)}} \cdots x_n^{t_{\sigma(n)}}: t_1 \geq \cdots \geq t_n \geq 1\right\}$$
is linearly independent, which holds from the linear independence of the set of monomial symmetric polynomials (modified by a positive integer).

For (2), note that $\C[x_1, ..., x_n, y_1, ..., y_m] = \C[x_1, ..., x_n]\otimes \C[y_1, ..., y_m]$. The conclusion then follows from the following general fact: if $S$ is a linearly independent subset in $V$, $T$ is a linearly independent subset in $W$, then $$\{s\otimes t: s\in S, t\in T\}$$ 
is a linearly independent subset in $V\otimes W$. 
\end{proof}

\begin{thm}\label{5-18}
Let $\lambda$ be a generic eigenvalue, then $V_U^{(2)}(l, f)$ is irreducible. 
\end{thm}

\begin{proof}
We first consider the case when $w\in V_U^{(2)}(l, f)$ is homogeneous of weight $\omega$. We show that if the lowest weight projection of $Y(v, x)w$ is zero for every $v\in V$, then $w=0$. 

Set 
$$w = \sum a_{i_1\cdots i_r}^{t_1\cdots t_k} \state{i_1(-t_1)\cdots i_{r/2}(-t_{r/2}) i_{r/2+1}(-t_{r/2+1})\cdots i_k (-t_k)i_{k+1}(0)\cdots i_r(0)}f, $$
here the sum is over all possible choices of $r\geq 0$ even, $i_1, .., i_r\in \{+,-\}, r/2\leq k \leq r$, and $t_1, ..., t_k$ such that $t_1+\cdots + t_k = \omega$, $t_1 \geq \cdots \geq t_{r/2}, t_{r/2+1}\geq \cdots \geq t_k$. We proceed by induction of $r$ that every $a_{i_1\cdots i_r}^{t_1\cdots t_k} = 0$

From the first part of Proposition \ref{proj0-prop}, if we choose $v=h_+(-m_1)h_-(-m_2)\one$, then except for those terms with $k\leq 2$, all other terms in the lowest weight projection of $Y(v, x)w$ are zero. So it suffices to consider only the following part in $w$: 
$$a_{+-}^{\omega}h_+(-\omega) h_-(0)f + a_{-+}^{\omega}h_-(-\omega)h_+(0)f + \sum_{t= 1}^{\omega-1}a_{+-}^{t,\omega-t}h_+(-t)h_-(-\omega+t)f. $$
By assumption and (\ref{proj0-formula-2}), 
\begin{align*}
    & a_{+-}^{\omega}(-1)^{m_1+m_2-2}\binom{\omega + m_2 -1}{\omega}\omega l\state{+-}f+a_{-+}^\omega (-1)^{m_1+m_2-2}\binom{\omega+m_1-1}{\omega}\omega l\state{-+}f \\
    & \qquad + \sum_{t=1}^{\omega-1} a_{+-}^{t,\omega-t} (-1)^{m_1+m_2-2} \binom{t+m_2 - 1}{t}\binom{\omega-t+m_1-1}{\omega-t}t(\omega-t)l^2 f = 0.
\end{align*}
Note that the left-hand-side can be regarded as a polynomial in $m_1, m_2$ after we remove $(-1)^{m_1+m_2}$. Since the equality holds for every $m_1, m_2\in \Z_+$, thus left-hand-side, as a polynomial in $m_1, m_2$, has to be zero. Since polynomials involving different variables are linearly independent, it follows that
$$a_{+-}^\omega \omega l \state{+-}f = a_{-+}^\omega \omega l \state{-+} f = 0.$$
Using Lemma \ref{lin-indep} (with $n=1,m=1$), we also see that 
\begin{align*}
  a_{+-}^{t,\omega-t} t(\omega-t) l^2 f = 0, t = 1,..., \omega -1. 
\end{align*}
Since $l\neq 0, \omega \in \Z_+$, and $\lambda$ is generic, it forces that 
$$a_{+-}^\omega = a_{-+}^\omega =0,  a_{+-}^{t, \omega-t} = 0, t = 1, ..., \omega -1. $$
This finishes the proof of the base case $r=2$. 

Now assume that $a_{j_1...j_r}^{t_1...t_k} = 0$ for smaller $r$. Then we apply $h_{i_1}(-m_1)\cdots h_{i_{r}}(-m_{r})\one$ to $w$. 
By (\ref{proj0-formula-2}), 
\begin{align}\label{irred-formula-1}
    \sum a_{j_1 \cdots j_r}^{t_1 \cdots t_k} \sum_{1\leq c_k < \cdots < c_1 \leq r} \prod_{p=1}^k(-1)^{m_p-1} &\left(\sum_{\sigma\in S_k} \prod_{p=1}\binom{t_{\sigma(p)}+m_{c_p} -1}{t_{\sigma(p)}}\prod_{p=1}^k t_{\sigma(p)}l\langle i_{c_p}, j_{\sigma(p)}\rangle\right)\nonumber \\
    & \cdot \state{i_1(0)\cdots \widehat{i_{c_k}(0)} \cdots \widehat{i_{c_1}(0)}\cdots i_r(0) j_{k+1}(0)\cdots j_r(0)} = 0,
\end{align}
where the sum is done over all possible choices of $j_1, ..., j_r, t_1, ..., t_k$ satisfying the conditions specified in 
Theorem \ref{5-16}, and $t_1+\cdots + t_k = \omega$. We again view the left-hand-side as a polynomial in $m_1, ..., m_r$. For different choices of $c_1, ..., c_k$, the corresponding polynomials are in the set (\ref{Formula-59}) while involve different variables and thus are linearly independent. So for every fixed $c_1, ..., c_k$ satisfying $1\leq c_1 < \cdots < c_k \leq r$, 
\begin{align}
    \sum a_{j_1 \cdots j_r}^{t_1 \cdots t_k}  \prod_{p=1}^k(-1)^{m_p-1} &\left(\sum_{\sigma\in S_k} \prod_{p=1}\binom{t_{\sigma(p)}+m_{c_p} -1}{t_{\sigma(p)}}\prod_{p=1}^k t_{\sigma(p)}l\langle i_{c_p}, j_{\sigma(p)}\rangle\right)\nonumber \\
    & \cdot \state{i_1(0)\cdots \widehat{i_{c_k}(0)} \cdots \widehat{i_{c_1}(0)}\cdots i_r(0) j_{k+1}(0)\cdots j_r(0)} f= 0.
\end{align}
For every fixed $j_1, ..., j_r$ and $c_1, ..., c_k$, we choose $i_1, ..., i_r$ so that $i_{c_1} = -j_1, ..., i_{c_k} = -j_k$. Thus $$\prod_{p=1}^k t_{p} l \langle i_{c_p}, j_{p}\rangle = 2^k l^k t_1 \cdots t_k. $$ 
Since $j_1 = \cdots = j_{r/2} = - j_{r/2+1}= \cdots = -j_{r}$, we know that 
$$\prod_{p=1}^k t_{\sigma(p)} l \langle i_{c_p}, j_{\sigma(p)}\rangle =\left\{\begin{array}{ll} 2^k l^k t_1 \cdots t_k & \text{ if }\sigma\in S_{r/2}\times S_{k-r/2}, \\
0 & \text{ otherwise.}
\end{array}\right.$$ 
Here $S_{r/2}$ is the permutation group of $\{1, ..., r/2\}$, $S_{k-r/2}$ is the permutation group of $\{r/2+1, ..., k\}$. 
Thus the summation over $S_n$ reduces to the summation over the subgroup $S_{r/2}\times S_{k-r/2}$. Rewrite $\sigma$ as $\sigma \cdot \tau$ in $S_{r/2}\times S_{k-r/2}$, the sum becomes
\begin{align}
\sum a_{j_1\cdots j_r}^{t_1\cdots t_k} \prod_{p=1}^k (-1)^{m_p-1} & \left(\sum_{\sigma \in S_{r/2}}\prod_{p=1}^{r/2} \binom{t_{\sigma(p)}+m_{c_p}-1}{t_{\sigma(p)}}\right)\left( \sum_{\tau\in S_{k-r/2}}\prod_{p=r/2+1}^k \binom{t_{\tau(p)}+m_{c_p}-1}{t_{\tau(p)}}\right)\nonumber \\
& \cdot 2^k l^k t_1 \cdots t_k \state{i_1(0)\cdots \widehat{i_{c_k}(0)} \cdots \widehat{i_{c_1}(0)}\cdots i_r(0) j_{k+1}(0)\cdots j_r(0)}f = 0.
\end{align}
Since $k\geq r/2$, the outer sum here is over only one choice of $j_1, ..., j_r$ we fixed above, and over all possible $t_1, ..., t_k$ satisfying $t_1 \geq \cdots \geq t_{r/2}\geq 1, t_{r/2}\geq \cdots \geq t_k \geq 1$. The second part of Lemma \ref{lin-indep} tells us that for every fixed $t_1, ..., t_k$, \begin{align}\label{irred-formula-2}
    a_{j_1\cdots j_r}^{t_1\cdots t_k} \prod_{p=1}^k (-1)^{m_p-1}\cdot  2^k l^k t_1\cdots t_k\cdot  \state{i_1 \cdots \widehat{i_{c_k}} \cdots \widehat{i_{c_1}} \cdots i_r j_{k+1} \cdots j_r} f = 0.
\end{align}
Since $\lambda$ is generic, $l\neq 0, t_1, ..., t_k \neq 0$, we thus have
$$a_{j_1\cdots j_r}^{t_1\cdots t_k}=0$$
for every fixed $j_1, ..., j_r$ and $t_1, ..., t_k$ satisfying the conditions in Theorem \ref{5-16}. 
\end{proof}


\subsection{Third quotient} We have shown that $V_U^{(2)}(l, f)$ is irreducible if the eigenvalue $\lambda$ is generic. For $\lambda=p(p-1)K$, we have the following conclusion,

\begin{thm}
Let $\lambda = p(p-1)K$ be the eigenvalue of $f$ for some $p\in\Z_+$. Then for every even number $r\geq k + p$, every $t_1 \geq \cdots \geq t_{r/2}, t_{r/2+1}\geq \cdots \geq t_{k}$, the element 
\begin{align}
    \state{j_1(-t_1) \cdots j_{r/2}(-t_{r/2}) j_{r/2+1}(t_{-r/2+1}) \cdots j_{k}(-t_k)j_{k+1}(0)\cdots j_r(0)}f \label{submodgen-3}
\end{align}
generates a proper submodule in $V_U^{(2)}(l, f)$. 
\end{thm}

\begin{proof}
Note that $j_{k+1} = \cdots = j_r$. Thus in  (\ref{proj0-formula}), the term 
$$\state{i_1(0)\cdots \widehat{i_{c_k}}(0) \cdots \widehat{i_{c_1}}(0) \cdots i_s(0) j_{k+1}(0) \cdots j_r(0)}f$$
contains at least $p$ consecutive $+$ or $-$. From Proposition \ref{4-4} Part (2), $\lambda = p(p-1)K$ annihilates (\ref{proj0-formula}). The conclusion then follows. 
\end{proof}

\begin{thm}\label{5-20}
Let $\lambda = p(p-1)K$ be the eigenvalue of $f$ for some $p\in \Z_+$. Let $V_U^{(3)}(l, f)$ be the quotient of $V_U^{(2)}(l, f)$ by the submodule generated by elements of the form (\ref{submodgen-3}). Then the the union of the following three sets
$$\left\{h_{+}(-t_1) \cdots h_{+}(-t_{r/2}) h_{-}(-t_{r/2+1}) \cdots h_{-}(-t_r)f:
 \begin{aligned} & r\geq 0 \text{ even, }t_1\geq \cdots \geq t_{r/2}\geq 1, \\ & t_{r/2+1} \geq \cdots \geq t_{r}\geq 1
\end{aligned}\right\},$$

$$\left\{
h_{+}(-t_1)\cdots h_{+}(-t_{r/2}) h_{-}(-t_{r/2+1}) \cdots h_{-}(-t_k)h_{-}(0)\cdots h_{-}(0)f: 
\begin{aligned}
& r\geq 0 \text{ even; }\\ & \max(r-p+1,r/2) \leq k \leq r-1; \\ 
& t_1\geq \cdots \geq t_{r/2}\geq 1,\\ & t_{r/2+1} \geq \cdots \geq t_{k}\geq 1
\end{aligned}\right\},$$

$$\left\{
h_{-}(-t_1)\cdots h_{-}(-t_{r/2}) h_{+}(-t_{r/2+1}) \cdots h_{+}(-t_k)h_{+}(0)\cdots h_{+}(0)f: 
\begin{aligned}
& r\geq 0 \text{ even; }\\ & \max(r-p+1,r/2) \leq k \leq r-1; \\ 
& t_1\geq \cdots \geq t_{r/2}\geq 1,\\ & t_{r/2+1} \geq \cdots \geq t_{k}\geq 1
\end{aligned}\right\},$$
forms a basis of $V_U^{(3)}(l, f)$. In other words, there are at most $p-1$ consecutive $h_-(0)$ (resp., $h_-(0)$) at the rear of the second (resp., the third) set of basis. 
\end{thm}

\begin{thm}
Let $\lambda = p(p-1)K$ be the eigenvalue of $f$. Then $V_U^{(3)}(l, f)$ is irreducible. 
\end{thm}

\begin{proof}
Observe that for every basis element in Theorem \ref{5-20}, there exists $i_1, ..., i_s$ and a choice of $c_1, ..., c_k$ such that 
$$\state{i_1(0)\cdots \widehat{i_{c_k}(0)} \cdots \widehat{i_{c_1}(0)} \cdots i_s(0) j_{k+1}(0) \cdots j_r(0)}f \neq 0.$$
With this observation, we can formulate an argument similarly as Theorem \ref{5-18}. Details shall not be repeated here. 
\end{proof}

\begin{rema}
In case $f$ is a global eigenfunction over a compact space form of negative sectional curvature, then the eigenvalue of $f$ must be a positive real number and will never coincide with $p(p-1)K$. Thus, $V_M^{(2)}(l, f)$ is irreducible. 
\end{rema}

\begin{rema}
In case $M = \mathbb{S}^2$, the two-dimensional unit sphere in $\R^3$, then $K=1$. It is also well known that if $f$ is an eigenfunction over $M$, then eigenvalue of $f$ is $p(p-1)$ for some $p\in \Z_+$. Thus $V_M^{(2)}(l, f)$ is not irreducible. One has to take the third quotient to obtain the irreducible $V_M^{(3)}(l, f)$.
\end{rema}

\begin{nota}
One easily sees from Theorem \ref{isom-reln} that if $f_1, f_2$ are two functions with the same eigenvalue $\lambda$, then the irreducible quotient of $V_U(l, f_1)$ and $V_U(l, f_2)$ are isomorphic. 
\end{nota}


\section{Some Classification Results on Lowest Weight Modules}

In this section, we will set up a correspondence between irreducible $V(l, \one)$-modules and irreducible $\Pi(T(E))$-modules. Then we introduce a geometrically interesting condition on the $\Pi(T(E))$-modules, called the covariant derivative condition. We will then classify all irreducible $V(l, \one)$-modules and all $V(l, \one)$-modules of finite length, whose lowest weight subspaces satisfy this covariant derivative condition and generate the whole module. 

For convenience, we will use $V$ to denote the MOSVA $V(l, \one)$ hereafter. 

\subsection{Lowest weight $V$-modules}

\begin{defn}
A $V$-module $W$ is a lowest weight $V$-module if there exists some $\mu\in \C$, such that $W=\coprod_{n\in \N} W_{[\mu+n]}$, and $W$ is generated by the lowest weight space $W_{[\mu]}$. 
\end{defn}

Obviously $W_{[\mu]}$ is a $\Pi(T(E))$-module. On the other hand, given a $\Pi(T(E))$-module $\Phi$, we consider the $V$-submodule $W^{(0)}(\Phi, [\mu])$ of the induced $T(\widehat{E_p}_-)$-module 
$$T(\widehat{E_p}_-)\otimes T(E) \otimes_{\Pi(T(E))} \Phi. $$
where we assign the subspace $1\otimes 1\otimes \Phi$ the weight $\mu\in \C$. 

\begin{prop}
$W^{(0)}(\Phi, [\mu])$ is universal in the following sense that every lowest weight $V$-module with $\Phi$ as the lowest weight subspace is a quotient of $W^{(0)}(\Phi, [\mu])$. 
\end{prop}

\begin{proof}
It follows from Theorem \ref{4-10} that both $W^{(0)}(\Phi, [\mu])$ and $W$ are spanned by 
\begin{align*}
\left\{  h_{i_1}(-t_1) \cdots h_{i_k}(-t_k) h_{i_{k+1}}(0) \cdots h_{i_{r}}(0)f: \begin{aligned} & f\in \Phi; r\geq 0, k = 1, ..., r, t_1, .., t_k > 0; \\
&\#\{j\in [1,r]: i_j = +\} = \#\{j\in [1,r]: i_j = -\}; \\
&\forall p = k+1, ..., r, \\
& \#\{j\in [p,r]: i_j = + \}\neq \#\{j\in [p,r]: i_j = - \}.\\
\end{aligned}\right\}.`
\end{align*}
Note also that the set forms a basis for $W^{(0)}(\Phi, [\mu])$. The map sending the basis vectors of $W^{(0)}(\Phi, [\mu])$ to the spanning vectors of $W$ is obviously a homomorphism of $V$-modules. Then $W$ is the quotient of $W^{(0)}(\Phi, [\mu])$ by the kernel of the map.
\end{proof}

\begin{prop}\label{6-1}
Let $W$ be an irreducible $V$-module. Then $W$ is a lowest weight $V$-module. The lowest weight subspace of $W$ is an irreducible $\Pi(T(E))$-module. 
\end{prop}

\begin{proof}
Fix $m\in \C$ such that $W_{[m]}\neq 0$. Then from the axiom of $V$-modules, there exists a smallest number $\mu$ in the congruence class $ m + \Z \in \C/\Z$, such that $W_{[\mu]}\neq 0$. Since $W$ is irreducible, then from Theorem \ref{4-10}, we know that $W = \coprod_{n\in \N} W_{[\mu+n]}$. From the definition of the vertex operator $Y(v,x)$, we see that the lowest weight subspace $W_{[\mu]}$ is also a $\Pi(T(E))$-module. Since $W$ is irreducible, for any $w\in W_{[\mu]}$, the submodule generated by $w$ must also coincide with $W$. In particular, the $\Pi(T(E))$-submodule generated by any $w\in W_{[\mu]}$ coincides with $W_{[\mu]}$. Thus $W_{[\mu]}$ is irreducible. 
\end{proof}

\begin{prop}\label{6-2}
Let $\Phi$ be an irreducible $\Pi(T(E))$-module. Then for any $\mu\in \C$, there exists a unique irreducible $V$-module $W(\Phi, [\mu])= \coprod_{n\in \N} W_{[\mu+n]}$ such that the lowest weight space $W_{[\mu]}$ is $\Phi$. 
\end{prop}

\begin{proof}
Consider the universal lowest weight $V$-module $W^{(0)}(\Phi,[\mu])$. It is clear that a $V$-submodule of $W^{(0)}(\Phi,[\mu])$ is proper if and only if it intersects the lowest weight subspace trivially. So the sum of two proper submodules stays proper. Thus there exists a unique maximal submodule. The quotient of $W^{(0)}(\Phi, [\mu])$ by this unique maximal submodule is thus the irreducible $V$-module satisfying the conditions. 
\end{proof}

\begin{rema}
It follows from the uniqueness in Proposition \ref{6-2} that there is a bijective correspondence between irreducible $\Pi(T(E))$-modules and the irreducible $V$-modules. 
\end{rema}

\begin{rema}\label{6-3}
If the irreducible $V$-module is grading-restricted, then its lowest weight subspace is certainly a finite-dimensional. But the converse does not necessarily hold. Indeed, using the lowest weight projection formula, one can show that an irreducible $V$-module is grading-restricted, if and only if the lowest weight subspace is finite-dimensional, and the action of $\Pi(T(E))$ satisfies the following technical condition: 

For every $w\in W_{[\mu]}$, every $\zeta\in \Z\setminus\{0\}$, there exists $R\in \Z_+$, such that for every $r>R$, $j_1, ..., j_r\in \{+,-\}$ satisfying $\sum_{p=1}^r j_p = \zeta$, there exists $C_{j'_1 \cdots j'_{r'}}$ for every $r'\leq R, j'_1, ..., j'_{r'}\in \{+, -\}$ satisfying $\sum_{q=1}^{r'} j'_{q'} = \zeta$, such that for every $i_1, ..., i_s$ satisfying $\sum_{p=1}^s i_p = -\zeta$, 
$$\state{i_1\cdots i_sj_1\cdots j_r}w = \sum C_{j'_1 \cdots j'_{r'}} \state{i_1 \cdots i_k j'_1 \cdots j'_{r'}}w,$$
where the sum is over all $r'\leq R$ and all choices of $j'_1, ..., j'_{r'}$ satisfying $\sum_{q=1}^{r'} j'_q = \zeta$. \\
In other words, $\state{i_1 \cdots i_k j_1 \cdots j_r} w$ can be expressed as a finite linear combination of terms whose tail length is bounded above, and the expression is uniform with respect to $i_1, ..., i_s$. 
\end{rema}

\begin{rema}
Though the technical condition in the previous remark reduces the problem of classifying irreducible grading-restricted $V$-modules to classifying irreducible finite-dimensional $\Pi(T(E))$-modules satisfying the technical condition, in practice it is not restrictive enough for us to work out an actual classification. We should emphasize that $\Pi(T(E))$, regarded as a subalgebra of $T(E_p)$, contains infinitely many generators that are algebraically independent. Thus to identify even a one-dimensional irreducible $\Pi(T(E))$-module requires a specification of infinitely many constants representing the actions of these infinitely many generators. Specifying all possible choices of these infinitely many constants satisfying the technical condition is highly nontrivial. 
\end{rema}

\subsection{Covariant derivative condition}

Instead of working on the technical necessary and sufficient condition in Remark \ref{6-3}, we will instead work on the following sufficient condition that is geometrically interesting. 

\begin{defn}
Let $N$ be a $\Pi(T(E))$-module. We say $N$ satisfies the covariant derivative condition if for every $\sum_{i}X_{(1)i}\otimes \cdots X_{(r)i}\in \Pi(T(E))$, 
\begin{align}
\sum_{i} \left(X_{(1)i} \otimes \cdots \otimes X_{(r-1)i}\otimes X_{(r)i} - X_{(1)i} \otimes \cdots \otimes X_{(r)i}\otimes X_{(r-1)i}\right) \label{CovDerReln-1}
\end{align}
act by zero, and for $i=1, ..., r-2,$
\begin{align}
    & \sum_{i} \left(X_{(1)i} \otimes \cdots \otimes X_{(j)i} \otimes X_{(j+1)i}\otimes  \cdots\otimes X_{(r)i} - X_{(1)i} \otimes \cdots \otimes X_{(j+1)i} \otimes X_{(j)i}\otimes  \cdots\otimes X_{(r)i}\right)\nonumber\\
    & \qquad + \sum_i \sum_{k=j+2}^r \left(X_{(1)i} \otimes \cdots \otimes X_{(j+2)i} \otimes \cdots R(X_{(j)i}, X_{(j+1)i})X_{(k)i}\otimes \cdots\otimes X_{(r)i} \right)\label{CovDerReln-2}
\end{align}
act by zero. 
\end{defn}


\begin{rema}
$\Pi(T(E))$-modules satisfying the covariant derivative condition automatically satisfies the condition in Remark \ref{6-3}. For every $w\in N$, and $\zeta\in \Z\setminus\{0\}$, $R$ can simply be chosen as $|\zeta|$. Then for every $r>|\zeta|$, and every $j_1, ..., j_r\in \{+,-\}$ satisfying $\sum_{p=1}^r j_p = \zeta$, there exists a constant $C$, such that for every $i_1, ..., i_s\in \{+, -\}$, 
$$\state{i_1\cdots i_s j_1 \cdots j_r} = C \state{i_1\cdots i_k} \state{\alpha}^R.$$
where $\alpha = \text{sgn}(\zeta)$ (cf. Theorem \ref{5-10}). 
\end{rema}

\begin{rema}
The covariant derivative condition is just one of the possible candidates for geometrically interesting conditions. Another option of such a condition comes from the action of parallel tensors on certain differential forms and shall be discussed in future work. 
\end{rema}

The covariant derivative condition can be used to express the actions of all generators of $\Pi(T(E))$ in terms of the action of $\state{+-}$. 

\begin{prop}\label{6-8}
Let $M$ be a finite-dimensional $\Pi(T(E))$-module satisfying the covariant derivative condition. Then for every $i_1, ..., i_r\in \{+, -\}$ with $\sum_{p=1}^r i_r = 0$, the action of $\state{i_1\cdots i_r}$ is uniquely determined by the action of $\state{+-}$, and can be expressed as a polynomial in $\state{+-}$. 
\end{prop}

\begin{proof}
In a nutshell, we can process the contraction of $+-$ pairs similarly as in Proposition \ref{4-4}. More elaborately, we show the conclusion by induction on $r$. For $r=2$, we see from (\ref{CovDerReln-1}) that
$$\state{-+}= \state{+-}.$$
Now assume the conclusion for all smaller $r$. Without loss of generality, let $i_r = i_{r-1} = \cdots = i_{k+1}=-, i_k = +$. Then from (\ref{CovDerReln-2}), 
\begin{align*}
    \state{i_1\cdots i_k i_{k+1} \cdots i_r} &= \state{i_1 \cdots i_k + -\cdots -} \\
    &= \state{i_1 \cdots i_k -+ \cdots -} - \sum_{j=k+2}^r \state{i_1\cdots i_k - \cdots-, R(+-)-, - \cdots -}\\
    &= \state{i_1 \cdots i_k -+ \cdots -} + 2K(r-k-1) \state{i_1\cdots i_k - \cdots-- \cdots -}.
\end{align*}
In the first term, the $+$ in is now moved to $(k+2)$-th position. The second term, by induction hypothesis, is a polynomial of $\state{+-}$. Repeating the process to move $+$ further right and handle the extra term similarly by induction hypothesis, until when we have $$\state{i_1\cdots i_k -\cdots -+-} = \state{i_1\cdots i_k-\cdots -}\state{+-}.$$
The induction hypothesis implies that first factor is a polynomial in $\state{+-}$, and the second factor is precisely $\state{+-}$. Thus we proved the conclusion for $r$. 
\end{proof}

\begin{prop}
Let $M$ be a finite-dimensional $\Pi(T(E))$-module. If $M$ is irreducible, then $M$ is one-dimensional. 
\end{prop}

\begin{proof}
Let $m\in M$ be an eigenvector for $\state{+-}$. Then from Proposition \ref{6-8}, for every $i_1, ..., i_r\in \{+,-\}$ with $\sum_{p=1}^r i_p = 0$, $m$ is an eigenvector for $\state{i_1\cdots i_r}$. Thus $\C m$ is a submodule of $M$ and has to coincide with $M$. 
\end{proof}


This result, combined with Proposition \ref{6-1}, \ref{6-2} and Theorem \ref{4-10}, yields the following classification theorem of irreducible $V$-modules satisfying the covariant derivative condition. 

\begin{thm}\label{6-9}
Let $W$ be an irreducible $V$-module such that the lowest weight subspace $W_{[\mu]}$ is a finite-dimensional $\Pi(T(E))$-module satisfying the covariant derivative condition. Then $W$ is isomorphic to the irreducible quotient of $V_U(l, f)[\mu]$ for some eigenfunction $f$. Here $V_U(l, f)[\mu]$ is the $V$-module generated by $f$ with the weight of $f$ specified as $\mu$. In other words, every irreducible $V$-module satisfying the covariant derivative condition is generated by an eigenfunction (locally over an open subset of the manifold $M$). 
\end{thm}

\begin{nota}
We will use the notation $W(\lambda,[\mu])$ to denote the irreducible quotient of $V_U(l, f)[\mu]$. 
\end{nota}

\subsection{$V$-modules of class $\mathscr{C}$: indecomposable case} 
\begin{defn}
A $V$-modules $W$ is of class $\mathscr{C}$, if it satisfies the following two conditions: 
\begin{enumerate}
    \item $W$ is a lowest weight module. The lowest weight subspace of $W$, as a  $\Pi(T(E))$-module, satisfies the covariant derivative condition. 
    \item $W$ admits a composition series 
    $$W = W_1 \supseteq W_2 \supseteq \cdots \supseteq W_n \supseteq W_{n+1} = 0$$
    of finite length, such that for $i = 1, ..., n$, the composition factor $W_i / W_{i+1}$ isomorphic to $W(\lambda_i, [\mu])$ for some $\lambda_i \in \C$ and some $\mu\in \C$.
\end{enumerate}
\end{defn}

To classify $V$-modules of class $\mathscr{C}$. We first classify the indecomposable modules with identical composition factors, i.e., $\lambda_1 = \cdots = \lambda_n = \lambda.$

\begin{prop}\label{6-11}
Let $\Phi$ be a $n$-dimensional $\Pi(T(E))$-module satisfying the covariant derivative condition. If $\Phi$ is indecomposable, then $\Phi$ is isomorphic to the $n$-dimensional $\Pi(T(E))$-module where the Jordan canonical form of $\state{+-}$ is a single Jordan block 
$$\begin{bmatrix}
\lambda & 0 &   \cdots & 0 & 0\\
1 & \lambda &   \ddots  & 0 & 0\\
\vdots & \ddots & \ddots & \ddots &\vdots \\ 
0 & 0 & \ddots & \lambda & 0 \\
0 & 0 & \cdots & 1 & \lambda
\end{bmatrix} $$
of dimension $n$. 
\end{prop}

\begin{proof}
Choose a basis of $\Phi$ such that $\state{+-}$ is represented by its Jordan canonical form. From Proposition \ref{6-8}, under this basis, every $\state{i_1\cdots i_r}$ is represented by a block-diagonal lower-triangular matrix consistently with the block-decomposition of that for $\state{+-}$. Since $M$ is indecomposable, the Jordan canonical form of $\state{+-}$ has to contain only one Jordan block. By Proposition \ref{6-8} again, the matrice of all other $\Pi(T(E))$ elements are then uniquely determined by that of $\state{+-}$. 
\end{proof}

\begin{prop}\label{6-17}
Let $\lambda \in \C$ be generic, i.e., $\lambda \neq p(p-1)K$ for every $p\in \Z_+$. Then there exists an indecomposable module (unique up to isomorphism) of class $\mathscr{C}$ with all composition factors isomorphic to $W(\lambda, [\mu])$. 
\end{prop}

\begin{proof}
Let $\Phi$ be the indecomposable $\Pi(T(E))$-module given in Proposition \ref{6-11}. Let $f_1, ..., f_n$ be a basis such that $\state{+-}$ is represented by the single Jordan block. Let $\Phi_i$ be the submodule of $\Phi$ spanned by $f_i, f_{i+1}, ..., f_n$. We use the simplified notation $W_i^{(0)}$ for $W^{(0)}(\Phi_i, [\mu])$. Then the space $\Phi_i$ forms the lowest weight subspace in $W_i^{(0)}$. We now take quotients of certain submodules similarly as in Section 5. Instead of going through every detail, we just sketch the process with comments on necessary modification:
\begin{enumerate}
    \item If $w$ is an element of $W_i^{(0)}$ such that the lowest weight projection of $Y(v,x)w$ is constantly zero, then $w$ generates a proper submodule. 
    \item The lowest weight projection formula in Proposition \ref{proj0-prop} stays in effect, where the $f$ in (\ref{proj0-formula}) and (\ref{proj0-formula-2}) can be taken as $f_i, f_{i+1}, ..., f_n$. 
    \item The statement of Theorem \ref{5-10} should be modified as follows: Let $j_1, ..., j_r\in \{+, -\}$ satisfying $\sum_{p=1}^r j_p = 0$, $t_1, ..., t_k\in \Z_+$. Set $N =\left|\sum_{p=k+1}^r j_p\right|$. Then there exists a polynomial $P(x)\in \C[x]$ depending only on $j_{k+1}, \cdots j_r$, such that for every $\alpha = i,i+1, ..., n$,
    \begin{align*}
        \state{j_1(-t_1) \cdots j_k(-t_k) j_{k+1}(0) \cdots j_r(0)}f_\alpha -  \state{j_1(-t_1)\cdots j_k(-t_k) j'_{1}(0) \cdots j'_{N}(0)}\left(P(\state{+-})f_\alpha\right)
    \end{align*}
    generates a proper submodule. Here $j_1' = \cdots = j_{N}' = \text{sgn}\left(\sum_{p=k+1}^r j_p\right)$.
    Note that the term $P(\state{+-})f_\alpha$ involves only $f_\beta$ for $\beta\geq \alpha$, and the coefficient of $f_\alpha$ coincides with $C=C_{j_{k+1}...j_r}(\lambda)$ given in Theorem \ref{5-10}. 
    \item Let $W_i^{(1)}$ be the quotient of $W_i^{(0)}$ by elements in (4), we see that $W_i^{(1)}$ has the basis 
    $$\left\{h_{i_1}(-t_1) \cdots h_{i_k}(-t_k)h_{i_{k+1}}(0) \cdots h_{i_r}(0)f_\alpha:
    \begin{aligned}
        &\alpha = i, i+1, ..., n \\
        &r \geq 0, t_1, ..., t_k > 0\\
        &\#\{j:i_j=+\} = \#\{j: i_j=-\}\\
        &i_{k+1} = \cdots = i_r \in \{+, -\}
    \end{aligned}\right\}.$$
    \item The statement of Theorem \ref{5-13} stays in effect, where $f$ in (\ref{submodgen-2}) can be taken as $f_\alpha, \alpha=i, i+1, ..., n$. 
    \item let $W_i$ be the quotient of $W_i^{(1)}$ by all the elements discussed in (5). We see that $W_i$ has the following basis
    $$\left\{h_{+}(-t_1) \cdots h_{+}(-t_{r/2}) h_{-}(-t_{r/2+1}) \cdots h_{-}(-t_r)f_\alpha: 
    \begin{aligned} & \alpha = i, i+1,, ..., n, \\
    & r\geq 0 \text{ even, }t_1\geq \cdots \geq t_{r/2}\geq 1, \\ & t_{r/2+1} \geq \cdots \geq t_{r}\geq 1
    \end{aligned}\right\},$$

    $$\left\{
    h_{+}(-t_1)\cdots h_{+}(-t_{r/2}) h_{-}(-t_{r/2+1}) \cdots h_{-}(-t_k)h_{-}(0)\cdots h_{-}(0)f_\alpha: 
    \begin{aligned}& \alpha = i, i+1,, ..., n, \\
    & r\geq 0 \text{ even, } r/2 \leq k \leq r-1 \\ 
    & t_1\geq \cdots \geq t_{r/2}\geq 1,\\ & t_{r/2+1} \geq \cdots \geq t_{k}\geq 1
    \end{aligned}\right\},$$
    
    $$\left\{
    h_{-}(-t_1)\cdots h_{-}(-t_{r/2}) h_{+}(-t_{r/2+1}) \cdots h_{+}(-t_k)h_{+}(0)\cdots h_{+}(0)f_\alpha: 
    \begin{aligned}& \alpha = i, i+1,, ..., n, \\
    & r\geq 0 \text{ even, } r/2 \leq k \leq r-1 \\ 
    & t_1\geq \cdots \geq t_{r/2}\geq 1,\\ & t_{r/2+1} \geq \cdots \geq t_{k}\geq 1
    \end{aligned}\right\}.$$
\end{enumerate} 
Thus we have constructed the modules
$$W_1\supseteq W_2 \supseteq \cdots \supseteq W_n \supseteq 0.$$
Take $W=W_1$. We show that the lowest weight $V$-module $W$ satisfies the conditions (1) and (2). Condition (1) holds because none of the above quotients have changed the lowest weight subspace. For condition (2), we first note that the quotient $W_i/ W_{i+1}$ is generated by the image $\bar{f_i}$ of $f_i$. Note also that for every polynomial $P(x)\in \C[x]$, $P(\state{+-})f_i$ is a linear combination of $f_i, f_{i+1}, ..., f_n$ with coefficient of $f_i$ being $P(\lambda)$. So the quotient is isomorphic to $W(\lambda,[\mu])$. 

Before arguing the uniqueness, we first make some observations. Fix any $i = 1, ..., n$. Notice if $U, V$ are two submodules of $W_i^{(0)}$ intersecting lowest weight subspace trivially, then so is $U+V$. Thus, there exists a unique maximal submodule of $W_i^{(0)}$ intersecting the lowest weight subspace trivially. One can argue similarly as Theorem \ref{5-18} that the module $W_i$ we constructed above is obtained from taking the quotient of $W_1^{(0)}$ by the unique maximal submodule intersecting the lowest weight subspace trivially. More precisely, Formula (\ref{irred-formula-1}) in the proof of Theorem \ref{5-18} can then be modified as 
\begin{align*}
    \sum a_{j_1 \cdots j_r,\alpha}^{t_1 \cdots t_k} \sum_{1\leq c_k < \cdots < c_1 \leq r} \prod_{p=1}^k(-1)^{m_p-1} &\left(\sum_{\sigma\in S_k} \prod_{p=1}\binom{t_{\sigma(p)}+m_{c_p} -1}{t_{\sigma(p)}}\prod_{p=1}^k t_{\sigma(p)}l\langle i_{c_p}, j_{\sigma(p)}\rangle\right)\nonumber \\
    & \cdot \state{i_1(0)\cdots \widehat{i_{c_k}(0)} \cdots \widehat{i_{c_1}(0)}\cdots i_r(0) j_{k+1}(0)\cdots j_r(0)}f_\alpha = 0.
\end{align*}
Here the summation is also over $\alpha = i, i+1, ..., n$. 
One argues similarly, until arriving at an analogue of Formula (\ref{irred-formula-2}), which in the current situation looks like
\begin{align*}
    \sum_{\alpha=i}^n a_{j_1\cdots j_r, \alpha}^{t_1\cdots t_k} \prod_{p=1}^k (-1)^{m_p-1}\cdot  2^k l^k t_1\cdots t_k\cdot  \state{i_1 \cdots \widehat{i_{c_k}} \cdots \widehat{i_{c_1}} \cdots i_r j_{k+1} \cdots j_r} f_\alpha = 0.
\end{align*}
This is a linear combination of $f_i, ..., f_n$. The coefficient of $f_i$ is precisely a scalar multiple of $f_i$. Since the eigenvalue is generic, the scalar is nonzero. This forces $a_{j_1\cdots j_r, i}^{t_1\cdots t_k} = 0$. Repeating this process for $f_{i+1}, ..., f_n$ to conclude that all $a_{j_1\cdots j_r, \alpha}^{t_1\cdots t_k} = 0$ for every $\alpha = i, i+1, ..., n$. 

Now we show the uniqueness of $W$ using these observations. Let $W$ be any indecomposable lowest weight $V$-module $W$ satisfying (1) and (2) with identical composition factors. We first consider the Jordan canonical form of $\state{+-}$ on the lowest weight space. If the number of Jordan blocks is strictly larger than 1, then from Theorem \ref{4-10}, $W$ can be decomposed by $V$-submodules generated by the generalized eigenvectors for each Jordan block, contradicting our indecomposable assumption. Therefore, the Jordan canonical form of $\state{+-}$ consists of one single Jordan canonical form. Thus $W$ is a quotient of $W_1^{(0)}$ constructed earlier. Since $W$ satisfies Condition (3), the submodule that can appear in the quotient intersects the lowest weight subspace trivially, and should have the same graded dimension as the submodule we used to obtain $W_1^{(2)}$. The conclusion then follows from the uniqueness of the maximal submodule that has trivial intersection to the lowest weight subspace. 
\end{proof}

\begin{nota}
We use the notation $W(\lambda, [\mu], n)$ to denote the lowest weight $V$-module we identified in Proposition \ref{6-17}. 
\end{nota}

On the other hand, the following proposition is certainly surprising. 
\begin{prop}\label{6-18}
Let $\lambda = p(p-1)K$ for some $p\in \Z_+$. Then there does not exists any indecomposable lowest weight $V$-modules satisfying (1) and (2), of length $n\geq 2$. and with identical composition factors $W(\lambda, [\mu])$. 
\end{prop}

\begin{proof}
It suffices the nonexistence for $n=2$. The case for any larger $n$ can be reduced to the $n=2$ case by taking the quotient of $W$ by the submodule $W_3$.  

Suppose the contrary that such a $V$-module $W$ exists. Let $W_{[\mu]}$ be the lowest weight subspace. Let $f_1, f_2$ be a basis of the $W_{[\mu]}$ such that 
$$\state{+-}f_1 = \lambda f_1 + f_2, \state{+-}f_2 = \lambda f_2.$$
Then as a vector space, $W$ has the basis
    $$\left\{h_{+}(-t_1) \cdots h_{+}(-t_{r/2}) h_{-}(-t_{r/2+1}) \cdots h_{-}(-t_r)f_\alpha: 
    \begin{aligned} & \alpha = 1, 2; r\geq 0 \text{ even};\\
    & t_1\geq \cdots \geq t_{r/2}\geq 1, \\ & t_{r/2+1} \geq \cdots \geq t_{r}\geq 1
    \end{aligned}\right\},$$

    $$\left\{
    h_{+}(-t_1)\cdots h_{+}(-t_{r/2}) h_{-}(-t_{r/2+1}) \cdots h_{-}(-t_k)h_{-}(0)\cdots h_{-}(0)f_\alpha: 
    \begin{aligned}& \alpha = 1,2;  r> 0 \text{ even; }\\
    & \max(r-p+1,r/2) \leq k \leq r-1 \\ 
    & t_1\geq \cdots \geq t_{r/2}\geq 1,\\ & t_{r/2+1} \geq \cdots \geq t_{k}\geq 1
    \end{aligned}\right\}.$$
    
    $$\left\{
    h_{-}(-t_1)\cdots h_{-}(-t_{r/2}) h_{+}(-t_{r/2+1}) \cdots h_{+}(-t_k)h_{+}(0)\cdots h_{+}(0)f_\alpha: 
    \begin{aligned}& \alpha = 1,2; r>0 \text{ even; }\\
    & \max(r-p+1,r/2) \leq k \leq r-1 \\ 
    & t_1\geq \cdots \geq t_{r/2}\geq 1,\\ & t_{r/2+1} \geq \cdots \geq t_{k}\geq 1
    \end{aligned}\right\}.$$

    From Theorem \ref{4-10}, the element
    $$w = h_+(-1)^p h_-(0)^p f_1$$
    is in $W$. We now use the lowest weight projection formula to show that $w = 0$. 
    
    First we note that $w$ is a linear combination of basis vectors of weight $p$. Thus $t_1 + \cdots + t_k = p$. This in particular means that $k$ can be at most as large as $p$. 
    
    Second, we note that for every $v =\state{i_1(-m_1)\cdots i_s(-m_s)}\one$, if $s<2p$, then the lowest projection of $Y(v, x)w$ is zero. Indeed, if we write $w = |j_1(-1)\cdots j_p(-1)j_{p+1}(0)\cdots j_{2p}(0)\rangle$ with $j_1 = \cdots = j_p = +, j_{p+1} = \cdots = j_{2p} = -$, then the factor $\prod_{\alpha=1}^p \langle i_{c_\alpha}, j_{\sigma(\alpha)}\rangle$ appearing in each summand in (\ref{proj0-formula}) is always zero if $s< 2p$, as any choice of $p$ elements in $i_1, ..., i_{s}$ has to contain at least one positive. 
    
    With the above two observations, we can use an argument identical to that Proposition \ref{6-17} (cf.  Theorem \ref{5-18}), to see that the coefficients of the basis vector with $k<2p$ are all zero. Since the argument is identical, we should not repeat it here.
    
    Thus, we show that in the expression of $w$ as a linear combination of basis vectors of $W$, all coefficients are zero. Thus $h_+(-1)^p h_-(0)^pf_1 = 0$ in $W$. 
    
    We interpret the vanishing of $h_+(-1)^p h_-(0) f_1$ as follows: if we view the lowest weight subspace $W_{[\mu]}$ as a $\Pi(T(E))$-module and denote it by $\Phi$, then $W$ is the quotient of $W^{(0)}(\Phi, [\mu])$ by some submodule that contains $h_+(-1)^p h_-(0)^p f_1$. However, if we consider the lowest weight projection of 
    $$Y(h_+(-1)^p h_-(-1)^p \one, x) h_+(-1)^p h_-(0)^p f_1,$$
    which, by Theorem \ref{proj0-prop}, is equal to
    $$2^p l^p (p!) h_+(0)\cdots h_+(0)h_-(0)\cdots h_-(0) f_1.$$
    Here, by Proposition \ref{6-8}, Proposition \ref{ConstMult} and the assumption $\lambda = p(p-1)K$, 
    \begin{align*}
        h_+(0)\cdots h_+(0)h_-(0)\cdots h_-(0) f_1 &= \prod_{\beta = 1}^{p} (-h_+\otimes h_-+\beta(\beta-1)K) f_1 \\
        &= \prod_{\beta = 1}^{p-1} (-h_+\otimes h_-+\beta(\beta-1)K) \cdot (-\lambda f_1 +  f_2 + p(p-1)K f_1)\\
        &= \prod_{\beta = 1}^{p-1} (-h_+\otimes h_-+\beta(\beta-1)K)f_2 \\
        &= \prod_{\beta = 1}^{p-1} (-p(p-1)K+\beta(\beta-1)K)f_2.
    \end{align*}
    In other words, the submodule generated by $h_+(-1)^ph_-(0)^p f_1$ contains a nonzero multiple of $f_2$ and thus have nontrivial intersection to the lowest weight subspace. Then $W$, as a quotient of $W^{(0)}(\Phi, [\mu])$, have a one-dimensional lowest weight subspace, contradictory to our assumption. 
\end{proof}

\begin{rema}
Proposition \ref{6-18} implies that for any $V$-module $W$ of class $\mathscr{C}$ with identical composition factors $W(p(p-1)K, [\mu])$ for some $p\in \Z_+$, then the Jordan canonical form of $\state{+-}$ has to be the diagonal matrix $p(p-1)K\cdot I$. It is clear that in this case $W$ is a direct sum of $W(p(p-1)K, [\mu])$.  
\end{rema}

\subsection{$V$-modules of class $\mathscr{C}$: general case} We are now ready for the classification theorem for $V$-modules of class $\mathscr{C}$:

\begin{thm}
Let $W$ be a $V$-module of class $\mathscr{C}$ with lowest weight $\mu\in \C$. Then $W$ is a direct sum of $W(\lambda, [\mu], n)$ with generic $\lambda\in \C$,  and $W(\lambda',[\mu])$ with special $\lambda' \in \{p(p-1)K: p\in \Z_+\}$.  
\end{thm}

\begin{proof}
Take a basis of the lowest weight subspace  $W_{[\mu]}$ that represents $\state{+-}$ in its Jordan canonical form:
$$\text{diag}[J(\lambda_1, n_1), ..., J(\lambda_g, n_g), \lambda_{g+1}, ..., \lambda_{g+s}].$$
Here $\lambda_1, ..., \lambda_g$ are generic, $\lambda_{g+1}, ..., \lambda_{g+s}$ are special. 
Let $\Phi_1, ..., \Phi_g, \Phi_{g+1}, ..., \Phi_{g+s}$ be the corresponding decomposition of $W_{[\mu]}$. For each $i = 1,..., g+s$, let $W_i$ be the $V$-submodule generated by $\Phi_i$. Then for $i = 1, ..., g$, $W_i$ is isomorphic to $W(\lambda_i, [\mu], n_i)$; for $i = g+1, ..., g+s$, $W_i$ is isomorphic to $W(\lambda_i, [\mu])$.  It is also clear from Theorem \ref{4-10} that $W = \bigoplus_{i=1}^{g+s} W_i$. 
\end{proof}

\begin{cor}
If $W$ is a $V$-module of class $\mathscr{C}$ with composition factors being the irreducible $V$-modules generated by eigenfunctions with special eigenvalues, then $W$ is completely reducible. 
\end{cor}

\noindent {\small \sc Pacific Institute of Mathematical Science | University Of Manitoba\\ 451 Machray Hall, 186 Dysart Road, Winnipeg, MB R3T 2N2, Canada}

\noindent {\em E-mail address}: fei.qi@umanitoba.ca

The author states that there is no conflict of interest.

\end{document}